\newcommand{\prim}{\mathrm{prim}}
\newcommand{\cross}{\times}
\newcommand{\ZZ}{\mathbb{Z}}
\newcommand{\KK}{\mathbb{K}}
\newcommand{\RR}{\mathbb{R}}
\newcommand{\diam}{\mathrm{diam}}
\newcommand{\bQ}{\mathbf{Q}}
\newcommand{\bR}{\mathbf{R}}
\newcommand{\bP}{\mathbf{P}}
\newcommand{\bg}{\mathbf{g}}
\newcommand{\ba}{\mathbf{a}}
\newcommand{\bz}{\mathbf{z}}
\newcommand{\bx}{\mathbf{x}}
\newcommand{\tbx}{\tilde{\bx}}
\newcommand{\tB}{\tilde{B}}
\newcommand{\mcF}{\mathcal{F}}
\newcommand{\mcA}{\mathcal{A}}
\newcommand{\mcV}{\mathcal{V}}
\newcommand{\mcW}{\mathcal{W}}
\newcommand{\mcC}{\mathcal{C}}
\newcommand{\mcG}{\mathcal{G}}
\DeclareMathOperator{\In}{In}
\DeclareMathOperator{\spec}{Spec}
\DeclareMathOperator{\id}{id}
\DeclareMathOperator{\Hom}{Hom}
\DeclareMathOperator{\Der}{Der}
\DeclareMathOperator{\len}{len}
\newtheorem{theorem}[equation]{Theorem}
\newtheorem{proposition}[equation]{Proposition}
\newtheorem{lemma}[equation]{Lemma}
\newtheorem{corollary}[equation]{Corollary}
\theoremstyle{definition}
\newtheorem{definition}[equation]{Definition}
\newtheorem{example}[equation]{Example}
\newtheorem{remark}[equation]{Remark}
\numberwithin{equation}{subsection}
\title{Gr\"obner Cones for Finite Type Cluster Algebras}
\author{Nathan Ilten}
\address{Department of Mathematics, Simon Fraser University,
	8888 University Drive, Burnaby BC V5A1S6, Canada}
\email{nilten@sfu.ca}
\author{Karolyn So}
\address{Department of Mathematics, Simon Fraser University,
	8888 University Drive, Burnaby BC V5A1S6, Canada}
\email{wsa57@sfu.ca}
\begin{document}
\pagenumbering{arabic}
\begin{abstract}
	Let $\mcA$ be a cluster algebra of finite cluster type. We study the Gr\"obner cone $\mcC_\mcA$ parametrizing term orders inducing an initial degeneration of the ideal $I_\mcA$ of relations among the cluster variables of $\mcA$ to the ideal generated by products of incompatible cluster variables.
We show that for any cluster variable $v$, the weight induced by taking compatibility degrees with $v$ belongs to $\mcC_\mcA$. This allows us to construct an explicit circular term order and prove a conjecture of Ilten, N\'ajera Ch\'avez, and Treffinger. Furthermore, we give explicit descriptions of the rays and lineality spaces of $\mcC_\mcA$ in terms of combinatorial models for cluster algebras of types $A_n$, $B_n$, $C_n$, $D_n$ with a special choice of frozen variables, and in the case of no frozen variables.
\end{abstract}
\maketitle

\section{Introduction}
Let $I_{2,n}$ be the homogeneous ideal of the Grassmannian $G(2,n)$ in its Pl\"ucker embedding; this ideal is generated by the quadrics
\[
	x_{ik}x_{jl}-x_{ij}x_{kl}-x_{il}x_{jk}
\]
for $1\leq i<j<k<l\leq n$. There exists a term order $\prec$ for which the initial ideal of $I_{2,n}$ with respect to $\prec$ is generated by the left-most monomials $x_{ik}x_{jl}$, see e.g.~\cite[pg.~131]{Sturmfels_AlgorithmsInInvariantTheory}. Sturmfels calls such term orders \emph{circular}. The resulting initial ideal $I_{2,n}^\cross$ is the Stanley-Reisner ideal of the dual associahedron and enjoys many wonderful properties, see e.g.~\cite{associahedron}.

For which term orders is the initial ideal of $I_{2,n}$ exactly $I_{2,n}^\cross$? For terms orders arising from a weight $\omega$, this answer is encoded by the so-called Gr\"obner cone $\mcC$ of $I_{2,n}$ with respect to $I_{2,n}^\cross$. Indeed, the cone $\mcC$  is the closure in Euclidean space of all weights $\omega$ for which the initial ideal of $I_{2,n}$ is $I_{2,n}^\cross$, see \S\ref{sec:groebner} for details.
Speyer and Sturmfels explicitly describe the rays and the lineality space of the cone $\mcC$ in \cite[\S 4]{tropgrass}. We will recover this result as a special case of our Theorems \ref{Proposition_ABCn_LinSp} and \ref{Theorem_Statement_Result3_ABCn}.

The above example of the Grassmannian $G(2,n)$ is the starting point of this paper. Introduced by Fomin and Zelevinsky \cite{Fomin_IFoundations}, a \emph{cluster algebra} is a kind of commutative algebra endowed with special combinatorial structure, see \S\ref{sec:cluster}. The homogeneous coordinate ring of the Grassmannian $G(2,n)$ is an example of a cluster algebra of type $A_{n-3}$.  
In this paper, we will study Gr\"obner cones related to cluster algebras.

As part of its combinatorial structure, a cluster algebra $\mcA$ has a distinguished set of generators, called \emph{cluster variables} and \emph{frozen variables}, giving rise to a presentation
\[
    0 \longrightarrow 
    I_{\mcA}\longrightarrow
    S \longrightarrow
    \mcA \longrightarrow
    0
\]
where $S$ is a polynomial ring.
The algebra $\mcA$ is of \emph{finite cluster type} when this set of generators is finite, making $S$ into a polynomial ring in finitely many variables. In analogy to the ideal $I_{2,n}^\cross$ above, we let $I_\mcA^\cross$ be the ideal of $S$ generated by products of the variables of $S$ corresponding to \emph{incompatible} generators of $\mcA$; see \S\ref{sec:groebner} for a precise description.
In the case of finite cluster type, Ilten, N\'ajera Ch\'avez, and Treffinger show that there is a term order on $S$ for which the initial ideal of $I_\mcA$ is exactly $I_\mcA^\cross$ \cite[Corollary 5.3.2]{ilten2021deformation}. As in the case of the Grassmannian, we will call such term orders \emph{circular}.

Let $\mcC_\mcA$ be the Gr\"obner cone parametrizing all weights giving circular terms orders. Inequalities implicitly describing $\mcC_\mcA$ may be found in loc.~cit.
The goal of this paper is to instead give an explicit description of the generators of, or at least some elements of, the Gr\"obner cone $\mcC_\mcA$.

In order to describe our results, we recall several more facts about cluster algebras of finite cluster type. Firstly, given any two cluster variables $x_1,x_2$ of $\mcA$, there is a non-negative integer $(x_1||x_2)$ called the \emph{compatibility degree} of $x_1$ and $x_2$, see \S\ref{sec:root} and \cite[\S 3.1]{Fomin_YSystemsAndGeneralizedAssociahedra}. Secondly, cluster algebras of finite cluster type may be classified according to \emph{type}, and these types are exactly the types of semisimple lie algebras, or equivalently, Cartan matrices of finite type \cite[Theorem 1.4]{Fomin_IIFiniteTypeClassification}.
Finally, cluster algebras of the classical types $A_n$, $B_n$, $C_n$, $D_n$ may be encoded via combinatorial models involving (pairs of) diagonals in regular polygons, see \S \ref{sec:comb} and \cite[\S12]{Fomin_IIFiniteTypeClassification}. These models come equipped with a special choice of frozen variables for the corresponding cluster algebras $\mcA$.

We now summarize our results. Let $\mcA$ be any cluster algebra of finite cluster type.
\begin{enumerate}
	\item For any cluster variable $v$ of $\mcA$, the tuple $\omega_v:=[(v||y)]_y$ as $y$ ranges over all cluster and frozen variables gives an element of the Gr\"obner cone $\mcC_\mcA$ (Theorem \ref{thm:compdegree}).\label{result:1}
	\item The sum of all weights $\omega_v$ gives rise to a circular term order (Corollary \ref{Corollary_ExplicitCircTermOrder}).\label{result:2}
	\item Let $\mcA$ be a cluster algebra with no frozen variables of type $A_n$, $B_n$, or $C_n$ for $n$ even, or of type $F_4$. We describe the generators of the rays of $\mcC_\mcA$ in terms of alternating sums of the $\omega_v$ (Theorem \ref{Therorem_Result2_Main}).\label{result:3}
	\item In types $A_n$, $B_n$, $C_n$, and $D_n$, we describe the lineality space and ray generators of $\mcC_\mcA$ in terms of the corresponding combinatorial model in the case of special frozen variables (Propositions \ref{Proposition_ABCn_LinSp} \ref{Proposition_Result3_Dn_Linsp} and Theorems \ref{Theorem_Statement_Result3_ABCn} \ref{Theorem_Statement_Result3_Dn}) and the case of no frozen variables (Theorems \ref{thm:cf1}, \ref{thm:cf2}, \ref{thm:cf3}, \ref{thm:cf4}).\label{result:4}
	\item We give a positive answer to Conjecture 6.2.9 of \cite{ilten2021deformation}, which states that the multidegrees of first order embedded deformations of $\spec (S/I_\mcA^\cross)$ that become trivial when forgetting the embedding do not interact with the semigroup generated by multidegrees of first order deformations induced by the universal cluster algebra of $\mcA$ (Corollary \ref{Corollary_ModCompDeg}).\label{result:5}
\end{enumerate}

\noindent Our approach to the above results in the classical types $A_n$, $B_n$, $C_n$, and $D_n$  is via the above-mentioned combinatorial models. For the exceptional cases we make use of a computer calculation.

In related work, Bossinger has studied tropicalizations of cluster varieties \cite{bossinger}. For finite type cluster algebras of \emph{full rank}, this gives a method for describing the rays and lineality space of $\mcC_\mcA$ in terms of the $\bg$-vectors of $\mcA$ \cite[Theorem 1.1 and Corollary 3.19]{bossinger}. It would be interesting to compare this with our explicit descriptions of $\mcC$ for the $A_n,B_n,C_n$ cases with special frozen variables, or no frozen variables and $n$ even. (The remaining cases we consider do not have full rank). Similarly, it would be interesting to see if Bossinger's results can be used to provide a uniform cluster-theoretic proof of Theorem \ref{thm:compdegree}.

The remainder of this paper is organized as follows. In \S\ref{sec:prelim} we cover preliminaries: cluster algebras are introduced in \S\ref{sec:cluster}, we discuss the Gr\"obner cone $\mcC_\mcA$ in \S\ref{sec:groebner}, and \S\ref{sec:root} introduces compatibility degree and discusses the connection with root systems. In \S\ref{sec:comb} we present the combinatorial models we will be using to study the classical types.
In \ref{sec:TO} we study the relationship between $\mcC_\mcA$ and compatibility degree, obtaining the results \eqref{result:1}, \eqref{result:2}, \eqref{result:3}, and \eqref{result:5} from above. The result \eqref{result:4} is split between \S\ref{sec:frozen}, where we study the case with special frozen variables, and \S\ref{sec:nofrozen}, where we study the case with no frozen variables. The latter is obtained as a consequence of the former.

\subsection*{Acknowledgements} Both authors were partially supported by an NSERC Discovery grant. Part of the contents of this paper appeared as the MSc thesis of the second author.
\section{Preliminaries}\label{sec:prelim}
\subsection{Cluster basics}\label{sec:cluster}
We introduce basic notions from the theory of cluster algebras, see \cite[\S 3]{fomin2021introduction1to3} for details.
Throughout the paper, $\KK$ will denote a field of characteristic zero.

We fix two positive integers $n\leq m$, and a field $\mcF$ isomorphic to a field of rational functions in $m$ variables over $\KK$. 
Recall that an $n\times n$ integer matrix $B$ is \emph{skew-symmetrizable} if there exists a diagonal matrix $D$ with positive integer diagonal entries such that $DB$ is skew-symmetric.
An $n\times n$ integer matrix $B=(b_{ij})$ is \emph{indecomposable} if there is no proper subset $I\subsetneq \{1,\ldots,n\}$ such that $b_{ij}=0$ for all $i\in I$ and $j\notin I$.

\begin{definition}[Seed]
	A \emph{labeled seed of geometric type} (or simply a \emph{seed}) in $\mcF$ is a pair $(\tbx, \tB)$, where
    \begin{enumerate}
        \item 
        $\tbx=(x_1,\ldots, x_m)$ is an $m$-tuple of elements of $\mcF$ with $\mcF=\KK(x_1,\ldots, x_m)$;
        \item 
        $\tB=(b_{ij})$ is an $m\times n$ integer matrix, in which the top $n\times n$ submatrix $B$ is skew-symmetrizable.
    \end{enumerate}
\end{definition}

We call $\tB$ the \emph{extended exchange matrix} of the seed and its top $n\times n$ submatrix $B$  the \emph{exchange matrix}. The $n$-tuple 
$\bx=(x_1,\ldots, x_n)$ is the \emph{cluster} of this seed and its elements are called \emph{cluster variables}; $\tbx$ is called the \emph{extended cluster} and $x_{n+1},\ldots,x_m$ are called \emph{frozen variables}.

Given a seed, we can produce new seeds from it by a process called mutation:
\begin{definition}[Mutation]
    Let $(\tbx, \tB)$ be a seed. Fix an index $k\in\{1,\ldots, n\}$. The \emph{mutation} $\mu_k$ in direction $k$ transforms $(\tbx, \tB)$ to the new seed $\mu_k(\tbx, \tB)=(\tbx', \tB')$ defined as follows:
    \begin{enumerate}
        \item
        The extended exchange matrix $\tB'=(b_{ij}')$ is given by
        \begin{equation*}
        b_{ij}'=\begin{cases}
            -b_{ij}  & \text{ if } i=k \text{ or } j=k , \\
            b_{ij} + \text{sgn}(b_{ik})\left[ b_{ik}b_{kj} \right]_+&\text{ otherwise,}
        \end{cases}
        \end{equation*}
        
        where $\left[a\right]_{+}:=\text{max}\{a, 0\}$.
        
        \item 
        The extended cluster $\tbx'=(x_1',\ldots, x_m')$ is given by $x_j'=x_j$ for $j\neq k$, and $x_k'\in\mathcal{F}$ is determined by the following equation called an \emph{exchange relation}
        \begin{equation}\label{EqtExchangeRelation}        x_k'x_k=\displaystyle\prod_{b_{ik}>0}x_i^{b_{ik}}+\prod_{b_{ik}<0}x_i^{-b_{ik}}.
        \end{equation}
        Here we use the convention that the empty product is equal to $1$.
    \end{enumerate}
\end{definition}
The mutation of a seed is again a seed.    
The variables on the left-hand side of Equation \eqref{EqtExchangeRelation} are said to be  \emph{exchangeable}.
    An exchange relation is said to be \emph{primitive} if at least one of the monomials on the right-hand side of Equation \eqref{EqtExchangeRelation} only consists of frozen variables. In this case, the other monomial on the right-hand side is called a \emph{primitive term}.
\begin{definition}[Cluster algebra]\label{DefClusterAlgebra}
	Let $(\tbx,\tB)$ be a seed. The \emph{cluster algebra} 
	$\mcA(\tbx,\tB)$ is the subalgebra of $\mcF$ generated by all cluster and frozen variables appearing in seeds obtained from $(\tbx,\tB)$ by a sequence of mutations.
\end{definition}

Up to isomorphism, the cluster algebra $\mcA(\tbx,\tB)$ only depends on the extended exchange matrix $\tB$ and not on $\tbx$, so we will frequently abbreviate $\mcA(\tbx,\tB)$ by $\mcA(\tB)$. The seed $(\tbx,\tB)$ is called the \emph{initial seed} of $\mcA(\tbx,\tB)$. Any seed $(\tbx',\tB')$ obtained from 
$(\tbx,\tB)$ via a sequence of mutations is called a seed of $\mcA(\tbx,\tB)$, and we note that $\mcA(\tbx',\tB')=\mcA(\tbx,\tB)$

A cluster algebra $\mcA$ has \emph{no frozen variables} if $n=m$. It is said to be of \emph{full rank} if for any (or equivalently for every) seed $(\tbx,\tB)$, $\tB$ has full rank.
The \emph{cluster variables} of $\mcA$ are the union of all cluster variables appearing in seeds of $\mcA$. Any cluster or frozen variables appearing in a common extended cluster of $\mcA$ are said to be \emph{compatible}. A cluster algebra $\mcA$ is said to be of \emph{finite cluster type} if it only has a finite number of cluster variables.
In this paper, we will only concern ourselves with cluster algebras of finite cluster type.

\begin{remark}
	The cluster algebras we have defined in Definition \ref{DefClusterAlgebra} are referred to in the literature as \emph{cluster algebras of geometric type}, see e.g.~\cite[Chapter 3]{fomin2021introduction1to3}. We follow the convention of loc.~cit.~and \cite{ilten2021deformation} of \emph{not} inverting the frozen variables. This has the consequence that the finite cluster type cluster algebras we consider with frozen variables are in general not complete intersection rings. 
\end{remark}

Given an $n\times n$ skew-symmetrizable matrix $B=(b_{ij})$, its \emph{Cartan counterpart} is the matrix $A=A(B)=(a_{ij})_{n\times n}$, where
    \[a_{ij}=\begin{cases}
        2 & \text{ if } i=j,\\
        -|b_{ij}| &\text{ if } i\neq j.
    \end{cases}\]
    The Cartan matrix $A(B)$ is indecomposable if and only if $B$ is. An indecomposable Cartan matrix is of \emph{finite type} if all principal minors are positive. More generally, a Cartan matrix $A(B)$ is of finite type if after conjugating by a permutation matrix, $A(B)$ is block diagonal with blocks indecomposable Cartan matrices of finite type. We recall that indecomposable finite type Cartan matrices are classified by Dynkin diagrams; this gives rise to the notion of the \emph{type} of a Cartan matrix of finite type.

\begin{theorem}[{\cite[Theorem 1.4]{Fomin_IIFiniteTypeClassification}}]\label{TheoremFiniteTypeClassification}
A cluster algebra $\mcA$ is of finite cluster type if and only if it admits a seed $(\tbx,\tB)$ such that the Cartan matrix $A(B)$ is of finite type.
\end{theorem}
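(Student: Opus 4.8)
The plan is to follow the strategy of Fomin and Zelevinsky, reducing the statement to a purely combinatorial classification of the matrices $B$ that can occur in a seed. First I would reduce to the connected case: if $B$ is block diagonal (i.e.\ $A(B)$ decomposable), then the cluster structure of $\mcA$ factors as a product over the blocks, so $\mcA$ has finitely many cluster variables if and only if each block does, and $A(B)$ is of finite type if and only if each block is. Hence it suffices to treat indecomposable $B$.

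Next I would establish the rank $2$ base case, which anchors both directions. For $B=\begin{pmatrix} 0 & b \\ -c & 0\end{pmatrix}$ with $b,c\geq 0$, iterating $\mu_1,\mu_2,\mu_1,\ldots$ produces a sequence of cluster variables satisfying a recursion whose growth is governed by the product $bc$. A direct computation shows the sequence is periodic --- equivalently $\mcA$ has finitely many cluster variables --- exactly when $bc\leq 3$, matching the four finite-type rank $2$ Cartan matrices (types $A_1\times A_1$, $A_2$, $B_2=C_2$, $G_2$ for $bc=0,1,2,3$). This identifies $bc\leq 3$ as the local finiteness criterion.

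I would then encode $B$ as a weighted oriented diagram $\Gamma(B)$ with an edge of weight $|b_{ij}b_{ji}|$ between $i$ and $j$, and track how mutation acts on $\Gamma(B)$. Call $\Gamma(B)$ \emph{$2$-finite} if every diagram in its mutation class has all edge weights $\leq 3$. The rank $2$ analysis shows finite cluster type forces $2$-finiteness: if some rank $2$ restriction ever attained $bc\geq 4$, restricting the mutation sequence to those two directions would already generate infinitely many cluster variables. The heart of the argument, and the step I expect to be the main obstacle, is the converse combinatorial classification: a connected diagram is $2$-finite if and only if it is mutation equivalent to an orientation of a finite-type Dynkin diagram. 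I would prove this by isolating a finite list of minimal $2$-\emph{infinite} diagrams and showing that any diagram not mutation equivalent to a Dynkin diagram must contain one of them --- a delicate case analysis over the possible local shapes and how they propagate under mutation.

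Finally, I would close the loop with the hard finiteness direction: if $\Gamma(B)$ is mutation equivalent to a Dynkin diagram, then $\mcA$ has finitely many cluster variables. Here the plan is to parametrize the non-initial cluster variables by their denominator vectors and show these are exactly the positive roots of the root system attached to the Dynkin type, yielding a bijection between cluster variables and almost positive roots; finiteness of the root system then gives finite cluster type. Since $A(B)$ is of finite type if and only if $\Gamma(B)$ is an orientation of a Dynkin diagram, and ``$\mcA$ admits a seed with $A(B)$ of finite type'' is equivalent to ``$\Gamma(B)$ is mutation equivalent to a Dynkin diagram,'' combining the three equivalences --- finite type $\Leftrightarrow$ $2$-finite, $2$-finite $\Leftrightarrow$ mutation equivalent to Dynkin, Dynkin $\Leftrightarrow$ $A(B)$ finite type up to mutation --- proves the theorem. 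The diagram classification is the most technical ingredient, with the denominator-vector parametrization the other substantial piece.
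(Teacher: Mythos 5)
The paper contains no proof of this statement at all: it is imported verbatim as a known result, cited to Fomin--Zelevinsky's finite type classification paper \cite{Fomin_IIFiniteTypeClassification}, so the only ``paper's proof'' to compare against is that citation. Your outline faithfully reconstructs the strategy of that cited proof --- reduction to indecomposable $B$, the rank-$2$ criterion $bc\leq 3$, necessity of $2$-finiteness, the classification of $2$-finite diagrams up to mutation equivalence, and the parametrization of non-initial cluster variables by positive roots via denominator vectors --- so it takes essentially the same route as the source the paper relies on, with the caveat that the two ingredients you correctly flag as hardest (the diagram classification and the root parametrization) constitute the bulk of Fomin--Zelevinsky's paper and remain unexecuted in your sketch.
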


Given a cluster algebra $\mcA$ of finite cluster type, its \emph{type} is the type of the Cartan matrix $A(B)$, where $B$ is the exchange matrix for any seed such that $A(B)$ is of finite type. It follows from \cite[Theorem 1.4]{Fomin_IIFiniteTypeClassification} that this is well defined.

\subsection{The Gr\"obner Cone}\label{sec:groebner}
Let $\mcA$ be a cluster algebra of finite cluster type. We will denote its set of cluster variables by $\mcV$ and its set of frozen variables by $\mcW$, and let 
\[
	\KK[\bz]=\KK\left[z_v ~|~ v\in \mcV\cup \mcW\right].
\]
We have a presentation
\[
    0 \longrightarrow 
    I_{\mcA}\longrightarrow
    \KK[\bz]\longrightarrow
    \mcA \longrightarrow
    0
\]
by sending $z_v$ to the corresponding cluster or frozen variable. 
In other words, we have that the cluster algebra $\mcA$ is isomorphic to $\KK[\bz]/I_{\mcA}$.
The ideal $I_\mcA$ includes polynomials corresponding to the exchange relations, but in general may contain additional relations.

\begin{definition}\label{defn:Icross}
	We let $I_\mcA^\cross$ be the ideal of $\KK[\bz]$ generated by products $z_v\cdot z_w$ where $v,w\in\mcV$ do not belong to a common cluster.
\end{definition}
\begin{remark}
	The ideal $I_\mcA^\cross$ may be viewed as the Stanley-Reisner ideal associated to the so-called cluster complex of the cluster algebra $\mcA$, see e.g.~\cite[\S2]{ilten2021deformation} for details and a precise statement.
\end{remark}

We denote by $\RR^{\mcV\cup \mcW}$ (or $\ZZ^{\mcV\cup \mcW}$) tuples of elements of $\RR$ (or $\ZZ$) indexed by $\mcV\cup \mcW$.
We will use multi-index notation for monomials of $\KK[\bz]$: for $\gamma\in \ZZ_{\geq 0}^{\mcV\cup \mcW}$, $\bz^\gamma:=\prod_v z_v^{\gamma_v}\in \KK[\bz]$.

Consider $\omega = (\omega_v)_{v\in\mcV\cup\mcW}\in \RR^{\mcV\cup\mcW}$. Recall that for any polynomial $f=\sum c_i \bz^{\ba_i}\in \KK[\bz]$, the \emph{initial form} $\In_\omega(f)$ of $f$ with respect to $\omega$ is the sum of all terms $c_i \bz^{\ba_i}$ such that the dot product $\omega \cdot \ba_i$ is maximal.
For an ideal $I$, the \emph{initial ideal} with respect to $\omega$ is
    \[\In_\omega(I):=\langle \In_\omega(f): f\in I\rangle.\]

In \cite[Corollary 5.3.2]{ilten2021deformation} it is shown that $I_\mcA^\cross$ is an initial ideal of $I_\mcA$. It thus makes sense to ask which weights give rise to this initial ideal.
\begin{definition}
	The \emph{Gr\"obner cone} $\mcC_\mcA$ of $I_\mcA$ with respect to the initial ideal $I_\mcA^\cross$ is the closure of the set of all $\omega\in \RR^{\mcV\cup\mcW}$ such that $\In_\omega(I_\mcA)=I_\mcA^\cross$.
\end{definition}

The goal of this paper is to give \emph{explicit} descriptions of some (or all) elements of $\mcC_\mcA$. Our starting point is the following \emph{implicit} description of $\mcC_\mcA$. Let $\deg:\mcV\cup \mcW\to \ZZ^{\mcV\cup\mcW}$ be the map sending $v$ to the standard basis vector $e_v$. This extends in the obvious way to a map on monomials in the cluster and frozen variables.

\begin{lemma}[cf. {\cite[Corollary 5.3.2]{ilten2021deformation}}]\label{Lemma_Of_The_Whole_Thesis}
	Let $\mcC_\prim$ be the cone in $\RR^{\mcV\cup \mcW}$ generated by 
 \[\deg(xx')-\deg(y_1)\in \ZZ^{\mathcal{V}\cup \mathcal{W}}\]
        where $x, x',y_1$ appear in a primitive exchange relation $xx'=y_1+y_2$, and $y_1$ is a primitive term in this relation.
Then $\mcC_\mcA\subseteq \RR^{\mcV\cup \mcW}$ is the cone dual to $\mcC_\prim$.
\end{lemma}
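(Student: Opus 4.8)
The plan is to exhibit the defining halfspace inequalities of $\mcC_\mcA$ explicitly and then recognize them as exactly the duality conditions against the generators of $\mcC_\prim$; granting this, the statement reduces to the tautology that a polyhedral cone cut out by inequalities $\omega\cdot u_i\geq 0$ is the dual of $\mathrm{cone}(\{u_i\})$. Concretely, I would prove
\[
\mcC_\mcA=\bigl\{\omega\in\RR^{\mcV\cup\mcW}:\ \omega\cdot\bigl(\deg(xx')-\deg(y_1)\bigr)\geq 0 \text{ for every primitive relation } xx'=y_1+y_2 \text{ with primitive term } y_1\bigr\},
\]
after which $\mcC_\mcA=(\mcC_\prim)^\vee$ is immediate from the definition of the dual cone. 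The input I would take for granted is the general characterization of the Gr\"obner cone attached to a \emph{monomial} initial ideal, together with the existence of the circular term order and the implicit inequalities of \cite[Corollary 5.3.2]{ilten2021deformation}.

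For the easy inclusion $\mcC_\mcA\subseteq(\mcC_\prim)^\vee$, take $\omega$ in the locus where $\In_\omega(I_\mcA)=I_\mcA^\cross$. For each primitive exchange relation the element $z_xz_{x'}-\bz^{\deg(y_1)}-\bz^{\deg(y_2)}$ lies in $I_\mcA$, so $\In_\omega$ of it is a nonzero element of $I_\mcA^\cross$. Since $x,x'$ are exchangeable they lie in no common cluster, so $z_xz_{x'}$ is a generator of $I_\mcA^\cross$, while each trailing monomial is a product of variables of a single (mutated) extended cluster, hence pairwise compatible, hence not in $I_\mcA^\cross$. The initial form is therefore forced to be $z_xz_{x'}$, giving $\omega\cdot\deg(xx')\geq \omega\cdot\deg(y_1)$. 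Passing to the closure yields the inclusion.

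The reverse inclusion $(\mcC_\prim)^\vee\subseteq\mcC_\mcA$ is the crux, and amounts to showing that the primitive-term inequalities already \emph{suffice}, i.e.\ that no trailing monomial imposes a new constraint. Two cases must be addressed. When both trailing monomials of a primitive relation are purely frozen, each is simultaneously a primitive term and contributes its own generator to $\mcC_\prim$, so nothing is missing. The genuine obstacle is the case of a relation $xx'=y_1+y_2$ in which exactly one monomial, say $y_2$, is purely frozen (so $y_1$ alone is a primitive term): here $\In_\omega(f)=z_xz_{x'}$ also requires $\omega\cdot\deg(xx')\geq\omega\cdot\deg(y_2)$, which is \emph{not} a generator of $\mcC_\prim$. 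I would show this constraint is redundant by exhibiting $\deg(xx')-\deg(y_2)$ as a nonnegative combination of primitive-term generators $\deg(uu')-\deg(v_1)$, using the cluster combinatorics to re-express the purely-frozen product $y_2$ through a chain of neighbouring primitive relations. (For instance, in $G(2,5)$ the frozen constraint $\omega_{13}+\omega_{24}\geq\omega_{12}+\omega_{34}$ is the sum of the primitive-term generators attached to the quadrilaterals $2345$ and $1235$.) This combinatorial reduction, controlling the purely-frozen trailing terms across all classical types, is where I expect the main difficulty to lie; together with the structural content of \cite[Corollary 5.3.2]{ilten2021deformation} guaranteeing that the exchange relations cut out $I_\mcA^\cross$, it closes the reverse inclusion and hence identifies $\mcC_\mcA$ with $(\mcC_\prim)^\vee$. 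As a consistency check, specializing to $G(2,n)$ should recover the inequalities of \cite{tropgrass}.
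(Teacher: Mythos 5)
Your forward inclusion is correct: since $I_\mcA^\cross$ is a monomial ideal, every term of $\In_\omega(f)$ for $f\in I_\mcA$ must itself lie in $I_\mcA^\cross$, and the trailing monomials of an exchange relation are products of variables from a common extended cluster, hence pairwise compatible and not in $I_\mcA^\cross$; this forces $\In_\omega(f)=z_xz_{x'}$ and yields the inequalities (your $G(2,5)$ redundancy computation also checks out). But the reverse inclusion, which you rightly call the crux, has a gap that is \emph{not} the one you focus on. Even if you succeeded in showing that every trailing-monomial inequality (frozen or not, primitive relation or not) is a nonnegative combination of the primitive-term inequalities, you would only have shown that for $\omega$ in the interior of the dual of $\mcC_\prim$ the initial form of each \emph{exchange relation} is the monomial $z_xz_{x'}$. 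That does not give $\In_\omega(I_\mcA)=I_\mcA^\cross$. Two things are missing. First, $I_\mcA^\cross$ is generated by $z_vz_w$ for \emph{all} incompatible pairs, whereas exchange relations exist only for exchangeable pairs, i.e.\ pairs with compatibility degree $1$ in both directions. For example, in type $B_2$ the diameter $[1,4]$ of $\bP_6$ and the pair $\{[2,6],[3,5]\}$ have compatibility degrees $2$ and $1$ in the two directions, so they are incompatible but not exchangeable; the generator $z_{[1,4]}z_{[2,6]}$ of $I_\mcA^\cross$ is therefore not the initial form of any exchange relation, and obtaining it inside $\In_\omega(I_\mcA)$ requires other elements of $I_\mcA$ (expansions of products of incompatible non-exchangeable variables into cluster monomials) together with control of their $\omega$-weights. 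Second, even granting $I_\mcA^\cross\subseteq\In_\omega(I_\mcA)$, the opposite containment $\In_\omega(I_\mcA)\subseteq I_\mcA^\cross$ needs a Gr\"obner-basis, flatness, or Hilbert-function argument: knowing initial forms of a generating set only bounds the initial ideal from below, and whether the exchange relations even generate $I_\mcA$ is itself a nontrivial fact.

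All of this is hidden in your appeal to ``the structural content of [Corollary 5.3.2] guaranteeing that the exchange relations cut out $I_\mcA^\cross$,'' and that appeal is circular: the cited corollary \emph{is} the statement of the lemma. Indeed, the paper offers no independent proof --- the lemma is stated as a citation of that corollary, whose proof in the cited work proceeds via deformation theory and multigraded Hilbert schemes rather than by inequality bookkeeping on exchange relations. So either you assume the conclusion, or you must supply the two missing ingredients above; the redundancy of the non-primitive inequalities, which you single out as the main difficulty, is actually the least of the problems.
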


We call the generators of $\mathcal{C}_{\mathrm{prim}}$ \emph{degrees induced by primitive exchange relations}. 
If $\mathcal{A}$ is a cluster algebra with indecomposable exchange matrix and not of type $A_1$, every primitive exchange relation $P$ has a unique primitive term. In these cases, we denote the induced degree $d_P$.
For type $A_1$, there is only one primitive exchange relation, and it has two primitive terms. We will deal with this case separately.

\subsection{Compatibility Degree and Root Systems}\label{sec:root}
Let $\mcA$ be a cluster algebra of finite cluster type. 
In \cite[Theorem 1.9]{Fomin_IIFiniteTypeClassification}, Fomin and Zelevinsky show that the cluster variables of $\mcA$ can be identified with so-called almost positive roots of a root system $\Phi$. Using this, in \cite[\S 3.1]{Fomin_YSystemsAndGeneralizedAssociahedra} they define a function from pairs of cluster variables to $\ZZ_{\geq 0}$ called the \emph{compatibility degree}. For cluster variables $x_1,x_2\in \mcA$ we denote their compatibility degree by $(x_1||x_2)$.
This has the following significance:
\begin{enumerate}
        \item $(x_1||x_2)=0$ if and only if $x_1$ and $x_2$ are compatible; and
        \item $(x_1||x_2)=(x_2||x_1)=1$ if and only if $x_1$ and $x_2$ are exchangeable.
\end{enumerate}
By convention, we extend the definition of compatibility degree to include frozen variables of $\mathcal{A}$ by setting $(x ||y)=0$ if either of $x,y$ is a frozen variable.

For a precise definition of compatibility degree, we refer the reader to \cite[\S 3.1]{Fomin_YSystemsAndGeneralizedAssociahedra}. However, in this paper we will use two interpretations of compatibility degree. Firstly, for cluster algebras of types $A_n$, $B_n$, $C_n$, or $D_n$, compatibility degree may be interpreted in terms of the combinatorial models that we recall in \S\ref{sec:comb}. Secondly, there is an interpretation of compatibility degree in terms of operations on the weight lattice of a root system that we now recall, see \cite{Yang_ClusterAlgebrasOfFiniteTypeViaCoxeterElementsAndPrincipalMinors}. We will use this for the exceptional types.

Let $\Phi$ be any root system spanning a vector space $V$ with bilinear pairing $\langle\cdot,\cdot \rangle$. 
See e.g.~\cite[\S1]{Humphreys_ReflectionGroupsAndCoxeterGroups} and\cite[\S8]{Hall_LieGroupsLieAlgebrasAndRepresentations} for details on root systems.
Fix $\alpha_1,\ldots,\alpha_n$ a choice of simple roots and let $s_1,\ldots,s_n$ be the corresponding simple reflections; these generate the Weyl group $W$.

The \emph{fundamental weights} are the unique elements $\omega_1,\ldots,  \omega_n\in V$ such that 
    \[
    2\frac{\langle\omega_j, \alpha_k\rangle}{\langle\alpha_k, \alpha_k\rangle}=\delta_{jk}
    \]
    for $j,k=1,\ldots, n$.
    The \emph{weight lattice} $\Lambda$ is the $\ZZ$-linear span of the fundamental weights.
Recall that the \emph{Cartan matrix} of $\Phi$ is the integer square matrix $A=(a_{ij})_{n\times n}$, where \[a_{ij}=\frac{2\langle\alpha_i,\alpha_j\rangle}{\langle\alpha_i,\alpha_i\rangle}\] for all $1\leq i,j \leq n $. 

An arbitrary element $w\in W$ can be written as a product $w=s_{i_1}s_{i_2}\ldots s_{i_r}$.  
When $r$ in the expression above is minimized, we call $r$ the \emph{length} of $w$ and say that this expression is a \emph{reduced expression} of $w$.
Let $c$ be a \emph{Coxeter element} in $W$, that is, $c$ is a product of all $n$ simple reflections.
Denote by $\zeta$ the unique longest element in $W$ and denote by $h(i;c)$ the minimum positive integer such that $c^{h(i;c)}\omega_i=\zeta\omega_i$. 
By \cite[Proposition 1.3]{Yang_ClusterAlgebrasOfFiniteTypeViaCoxeterElementsAndPrincipalMinors}, $h(i;c)$ is a finite number.

\begin{remark}
    Note that $\zeta \omega_i=-\omega_j$ for some $j$ \cite[Chapter IV, \S1.6, Corollary 3]{Bourbaki}, so the definition of $h(i;c)$ above implies that $c^{h(i;c)}\omega_i=-\omega_j$.
\end{remark}

With this setting, we construct a cluster algebra by defining the initial exchange matrix. Let $B_c=(b_{ij})_{i,j=1,\ldots n}$ be the integer matrix where 
\[
b_{ij}=\begin{cases}
    -a_{ij} & \text{ if } i \prec_c j,\\
    a_{ij}  & \text{ if } j \prec_c i,\\
    0       & \text{ otherwise, }
\end{cases}
\]
where $i \prec_c j$ if and only if $s_i$ precedes $s_j$ in all reduced expressions of $c$. Here, the $a_{ij}$ are the entries of the Cartan matrix of $\Phi$.
By \cite[\S 1]{Yang_ClusterAlgebrasOfFiniteTypeViaCoxeterElementsAndPrincipalMinors}, $B_c$ is skew-symmetrizable.
Let $\mcA(B_c)$ be the cluster algebra defined by the matrix $B_c$ (with no frozen variables).
By \cite[Theorem 1.2]{Yang_ClusterAlgebrasOfFiniteTypeViaCoxeterElementsAndPrincipalMinors}, the cluster algebra $\mcA(B_c)$ is of finite cluster type, and of the same type as the root system $\Phi$. 

To any cluster variable $x\in\mcA(B_c)$, one may associate a so-called $\bg$-vector $\bg(x)\in \ZZ^{n}$. The precise definition of $\bg$-vectors is not needed here;  interested readers may consult \cite[\S6]{Fomin_IVCoefficients} and \cite{Yang_ClusterAlgebrasOfFiniteTypeViaCoxeterElementsAndPrincipalMinors}.

\begin{theorem}[{\cite[Theorem 1.4]{Yang_ClusterAlgebrasOfFiniteTypeViaCoxeterElementsAndPrincipalMinors}}]\label{TheoremBijectionGVectors}
    The cluster variables of $\mcA(B_c)$ are in bijection with the elements of the set
    \[
    \Pi(c):=\{c^k\omega_i : 1\leq i\leq n,\ 0\leq k \leq h(i;c)\},
    \]
    by the map
    \[
    \psi: x \mapsto \sum_{i}\mathbf{g}(x)_i\omega_i.
    \]
\end{theorem}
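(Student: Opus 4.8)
The plan is to deduce the bijection from an explicit inductive computation of the $\bg$-vectors of the cluster variables of $\mcA(B_c)$, together with the standard fact that in finite cluster type distinct cluster variables carry distinct $\bg$-vectors. Granting the latter, injectivity of $\psi$ is immediate: since $\omega_1,\dots,\omega_n$ is a basis of $\Lambda\otimes_\ZZ\RR$, the map $\bg(x)\mapsto\sum_i\bg(x)_i\omega_i$ is linear and injective on $\bg$-vectors, so $\psi$ is injective. It thus suffices to identify the image of $\psi$ with $\Pi(c)$.

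To this end I would argue inductively along mutation sequences issuing from the initial seed $(\tbx,B_c)$, at which the cluster variables $x_1,\dots,x_n$ carry $\bg$-vectors $e_1,\dots,e_n$, so that $\psi(x_i)=\omega_i=c^0\omega_i$. The crucial claim is that a mutation in the direction dictated by the Coxeter word corresponds, under $\psi$, to applying $c$ to the weight: if $\psi(x)=c^k\omega_i$ with $k<h(i;c)$, the appropriate mutation yields a cluster variable $x'$ with $\psi(x')=c^{k+1}\omega_i$. As observed above, $c^{h(i;c)}\omega_i=\zeta\omega_i=-\omega_j$, and at this stage the mutation produces no further new variable in that direction; this simultaneously terminates the orbit of $\omega_i$ and fixes its length. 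Running this over all $i$ shows that the image of $\psi$ both lies in and exhausts $\Pi(c)$.

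The main obstacle is the linearization implicit in this inductive step. The $\bg$-vector mutation rule is only piecewise-linear, its branches chosen by the operators $[\cdot]_+$, whereas the action of $c$ on $\Lambda$ is genuinely linear; making the two coincide requires controlling the signs that enter the mutation rule along the whole orbit, i.e.\ verifying that the Coxeter-ordered sequence of mutations is sign-coherent, so that the same linear branch is always selected. Here I expect to use that $B_c$ is acyclic with orientation recorded by $\prec_c$, which keeps the relevant sign data consistent. Once linearization is in hand, $\bg(x)$ is forced to equal the fundamental-weight coordinates of $c^k\omega_i$, and with injectivity this gives the bijection; as a consistency check, $\sum_{i=1}^n(h(i;c)+1)=|\Pi(c)|$ equals the number of almost positive roots of $\Phi$, which by the identification recalled in \S\ref{sec:root} is the number of cluster variables of $\mcA(B_c)$.
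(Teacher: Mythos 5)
The first thing to note is that the paper does not prove Theorem \ref{TheoremBijectionGVectors} at all: it is imported verbatim from \cite[Theorem 1.4]{Yang_ClusterAlgebrasOfFiniteTypeViaCoxeterElementsAndPrincipalMinors}, so there is no in-paper argument to compare against, and your proposal has to be judged as an attempted independent proof of Yang--Zelevinsky's theorem. As such it has genuine gaps: it is an outline in which every load-bearing step is deferred. First, you ``grant'' that distinct cluster variables of $\mcA(B_c)$ carry distinct $\bg$-vectors. This is not a free-standing standard fact you can quote while proving this theorem; injectivity of the cluster-variable-to-$\bg$-vector assignment is part of what \cite[Theorem 1.4]{Yang_ClusterAlgebrasOfFiniteTypeViaCoxeterElementsAndPrincipalMinors} establishes, and in general it sits in the circle of Fomin--Zelevinsky's sign-coherence/separation conjectures, whose known proofs are of at least the same depth as the statement at hand. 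Second, your ``crucial claim'' --- that mutation in the Coxeter order acts on weights, via $\psi$, as the linear map $c$ --- is exactly the technical heart of the theorem, and you explicitly do not carry it out: ``I expect to use that $B_c$ is acyclic'' is a hope, not an argument. The $\bg$-vector mutation rule selects branches according to signs of $\mathbf{c}$-vectors, and verifying that the same linear branch is chosen along the entire orbit (including the wrap-around at $c^{h(i;c)}\omega_i=-\omega_j$, where linearity genuinely fails and $\tau_c$ must replace $c$) is where all of the work lies.

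Third, and independently fatal: your induction only ever sees cluster variables occurring in seeds obtained from the initial seed by iterating the Coxeter-ordered mutation sequence. That every cluster variable of $\mcA(B_c)$ lies on this ``belt'' is a nontrivial theorem in its own right --- the exchange graph in finite type is far larger than the belt --- so the induction cannot by itself show that the image of $\psi$ ``lies in and exhausts'' $\Pi(c)$. Your closing cardinality check could in principle repair this (injectivity, plus image contained in $\Pi(c)$ for belt variables, plus $\#\{\text{cluster variables}\}=\#\Pi(c)$ would force equality), but the two ingredients it needs --- the identity $\sum_{i=1}^n\bigl(h(i;c)+1\bigr)=\#\{\text{almost positive roots of }\Phi\}$ and the pairwise distinctness of the elements $c^k\omega_i$ of $\Pi(c)$ --- are themselves unproven in your proposal and are again part of the Yang--Zelevinsky analysis. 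In short, the skeleton you describe is a plausible route, but each of its three pillars (separation by $\bg$-vectors, linearization of mutation along the Coxeter word, and exhaustion/counting) is assumed rather than proved, and these assumptions jointly constitute essentially the whole content of the theorem.
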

\noindent The above bijection extends to a bijection between cluster monomials of $\mcA(B_c)$, that is,  monomials consisting of variables from a common cluster, and the weight lattice $\Lambda$, see \cite[\S1]{Stella_ExchangeRElationsForFiniteTypeClusterAlgebraWithAcyclicInitialSeedAndPrincipalCoefficients}.
Indeed, any cluster monomial $y_1^{\gamma_1}\cdots y_n^{\gamma_n}$ maps to $\sum_i \gamma_i\psi(y_i)\in\Lambda$.
For any weight $\lambda\in \Lambda$, we denote by $x_\lambda$ be the corresponding cluster monomial of $\mcA(B(c))$.

The set $\Pi(c)$ is endowed with a permutation $\tau_c$ defined by 
\begin{equation*}
\tau_c(\lambda):=\begin{cases}
    \omega_i & \text{ if } \lambda=-\omega_i,\\
    c\lambda & \text{ otherwise,}    
\end{cases}    
\end{equation*}
which extends to a piecewise linear map on the whole of $\Lambda$.

\begin{theorem}[{\cite[Proposition 5.1]{Yang_ClusterAlgebrasOfFiniteTypeViaCoxeterElementsAndPrincipalMinors}}]\label{Theorem_c-compdeg_def}
    There is a unique $\tau_c$-invariant function $(\cdot|| \cdot)_c$ on pairs of elements of $\Pi(c)$ satisfying the initial conditions
    \begin{enumerate}[label=(\arabic*)]
        \item 
		$(\omega_i || \omega_j)_c=0$ for $i,j\in\{1,\ldots,n\}$,
        \item 
		$(\omega_i || \lambda)_c = [(c^{-1}-\id)\lambda : \alpha_i]_+$ for $\lambda \in \Pi(c)\backslash
	\{\omega_j:j\in \{1,\ldots,n\}\}$,
    \end{enumerate}
    where $[v:\alpha_i]_+$ denotes the maximum of $0$ and the $i$-th coefficient of $v$ when expressed as a linear combination of simple roots.
    This function is called the $c$-compatibility degree. 
    Furthermore, under the bijection $\psi$ of Theorem \ref{TheoremBijectionGVectors}, for cluster variables $x,y\in \mcA(B_c)$ we have
    \[
    (x || y)= (\psi(x) || \psi(y))_c.
    \]
\end{theorem}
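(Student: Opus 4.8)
The plan is to prove the uniqueness statement directly from the dynamics of $\tau_c$, and then to obtain both existence and the final identity in one stroke by transporting the Fomin--Zelevinsky compatibility degree through the bijection $\psi$.

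\emph{Uniqueness.} First I would examine the orbits of $\tau_c$ on the finite set $\Pi(c)$. Since $\Pi(c)$ is finite, $\tau_c$ decomposes it into disjoint cycles, and the defining relation $c^{h(i;c)}\omega_i=\zeta\omega_i=-\omega_j$ together with the rule $\tau_c(-\omega_j)=\omega_j$ shows that each cycle passes through a fundamental weight. Hence for every $\lambda\in\Pi(c)$ there is an $m\geq 0$ with $\tau_c^m\lambda=\omega_i$ fundamental. Given any $\tau_c$-invariant function $(\cdot||\cdot)_c$ and any pair $(\lambda,\mu)$, invariance yields $(\lambda||\mu)_c=(\tau_c^m\lambda\,||\,\tau_c^m\mu)_c=(\omega_i\,||\,\tau_c^m\mu)_c$, and the right-hand side is forced by condition (1) when $\tau_c^m\mu$ is fundamental and by condition (2) otherwise. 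Thus the three requirements pin down every value, proving uniqueness.

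\emph{Existence and the identity.} The cluster variables of $\mcA(B_c)$ are identified with the almost positive roots $\Phi_{\geq-1}$, and $(x||y)$ is \emph{by definition} the Fomin--Zelevinsky compatibility degree of the corresponding almost positive roots \cite[\S3.1]{Fomin_YSystemsAndGeneralizedAssociahedra}. I would therefore define $(\cdot||\cdot)_c$ on $\Pi(c)$ by transport of structure through the bijection $\psi$ of Theorem \ref{TheoremBijectionGVectors}, setting $(\psi(x)||\psi(y))_c:=(x||y)$. With this definition the displayed identity holds tautologically, so existence reduces to checking that the transported function satisfies the initial conditions (1) and (2) and is $\tau_c$-invariant; uniqueness then identifies it as the asserted $c$-compatibility degree.

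\emph{Verification and the main obstacle.} Condition (1) is immediate: the initial cluster variables $x_1,\dots,x_n$ have $\bg$-vectors $e_1,\dots,e_n$, so $\psi(x_i)=\omega_i$, and since they lie in a common cluster we get $(\omega_i||\omega_j)_c=(x_i||x_j)=0$. For $\tau_c$-invariance I would identify $\tau_c$, under $\psi$, with the rotation symmetry of the cluster structure under which the Fomin--Zelevinsky degree is already invariant (the involutions $\tau_\pm$ in the bipartite case and their Coxeter-element analogue in general), checking that the piecewise-linear formula for $\tau_c$ matches the relevant sequence of mutations. The crux is condition (2): one must prove $(x_i||y)=[(c^{-1}-\id)\psi(y):\alpha_i]_+$ whenever $\psi(y)$ is not a fundamental weight. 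I expect this to be the main obstacle, since it requires marrying the explicit Fomin--Zelevinsky initial value $(-\alpha_i||\beta)=[\beta:\alpha_i]_+$ to the $\bg$-vector parametrization $\psi$ through the duality between $\bg$-vectors and $c$-vectors, and then confirming consistency at the ``wrap-around'' points $c^{h(i;c)}\omega_i=\zeta\omega_i=-\omega_j$, where the longest element $\zeta$ enters. This last consistency is precisely the compatibility of the boundary conditions (1) and (2) with $\tau_c$-invariance, and it is the heart of the root-theoretic computation.
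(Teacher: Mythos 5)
This statement is one the paper does not prove at all: it is imported verbatim from Yang--Zelevinsky (their Proposition 5.1), and the only related material in the paper is the orbit computation sketched immediately after the theorem. So there is no internal proof to compare against; your proposal must be judged on its own. Your uniqueness paragraph is correct and complete: since $c^{h(i;c)}\omega_i=\zeta\omega_i=-\omega_j$ and $\tau_c(-\omega_j)=\omega_j$, every $\tau_c$-orbit in $\Pi(c)$ passes through a fundamental weight, so $\tau_c$-invariance together with conditions (1) and (2) forces every value $(\lambda||\mu)_c$. This is exactly the mechanism the paper itself exploits after the theorem to compute compatibility degrees.

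The gap is in existence and, with it, the final identity. Defining $(\psi(x)||\psi(y))_c:=(x||y)$ by transport of structure makes the displayed identity tautological, but it relocates the entire mathematical content into two verifications that you name and then defer: that the transported function satisfies condition (2), and that it is $\tau_c$-invariant. Neither is carried out. Condition (2) requires reconciling the Fomin--Zelevinsky initial condition $(-\alpha_i\,\|\,\beta)=[\beta:\alpha_i]_+$, which lives in the almost-positive-roots (denominator-vector) parametrization, with the weight/$\bg$-vector parametrization $\psi$; this duality for the acyclic seed $B_c$ is a genuine root-theoretic computation, not bookkeeping. Likewise, $\tau_c$-invariance of the transported function amounts to identifying the piecewise-linear map $\tau_c$ on $\Lambda$ with a concrete symmetry of the cluster structure (a composition of mutations/automorphisms under which the FZ degree is known to be invariant), which you only describe as an expected match. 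These two checks are precisely the content of Yang--Zelevinsky's proposition and of their supporting results on $\bg$-vectors; without them your argument establishes uniqueness of a function satisfying the axioms but not that any function --- in particular the transported one --- actually satisfies them. Note also that your uniqueness argument quietly presupposes consistency: a value $(\lambda||\mu)_c$ can generally be forced in several ways (via different powers $m$ landing on fundamental weights), and it is exactly existence that guarantees these forced values agree. As written, the proposal is a correct plan whose central step is missing.
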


We may use Theorem \ref{Theorem_c-compdeg_def} to compute compatibility degree as follows.
By the definition of $\Pi(c)$, for every weight $\lambda\in \Pi(c)$, 
there is some natural number $k$ such that $c^k \lambda = \zeta \omega_j=-\omega_i$ for some $i$ and $j$.
Taking the minimal such $k$ and applying $\tau_c$ one more time, we have $\tau_c^{k+1}(\lambda)=\omega_i$. 
Using this fact, we may compute for any $\mu\in\Pi(c)$ that 
\begin{align*}
    (\lambda||\mu)_c    &= (\tau_c^{k+1}(\lambda)||\tau_c^{k+1}(\mu))_c\\
                        &= (\omega_i||\tau_c^{k+1}(\mu))_c\\
                        &= 
                        \begin{cases}
				0 & \text{ if } \tau_c^{k+1}(\mu)=\omega_j \text{ for some }j\in\{1,\ldots,n\},\\
                            \left[(c^{-1}-\id)(\tau_c^{k+1}(\mu)) : \alpha_i\right]_+ & \text{ otherwise.}
                        \end{cases}
\end{align*}

To prove Theorem \ref{thm:compdegree} we will also need good control over the exchange relations in $\mcA(B(c))$.
We say that weights $\lambda,\mu\in\Pi(c)$ are \emph{exchangeable} if the corresponding cluster variables in $\mcA(B(c))$ are exchangeable.
\begin{theorem}[{\cite[Theorem 1.5]{Yang_ClusterAlgebrasOfFiniteTypeViaCoxeterElementsAndPrincipalMinors}}, {\cite[Corollary 4.2]{Stella_Polyhedral_Models}}]\label{Theorem_GenPrimExchRel}
    For any two exchangeable weights $\lambda$ and $\mu$ in $\Pi(c)$, the set
    \begin{equation*}
    \{\tau_c^{-k}(\tau_c^{k}(\lambda)+\tau_c^{k}(\mu))\}_{k\in\ZZ}    
    \end{equation*}
    consists of exactly two weights. One of them is $\lambda +\mu$; denote the other by $\lambda \uplus_c \mu$. 
    The exchange relation $\mathcal{A}(c)$  between $x_\lambda$ and $x_\mu$ is exactly
    \begin{equation*}
    x_\lambda x_\mu = x_{\lambda+\mu} + x_{\lambda \uplus_c\mu}.  
    \end{equation*}
Furthermore, a relation is primitive if and only if $\lambda$ and $\mu$ differ by an application of $\tau_c$.
\end{theorem}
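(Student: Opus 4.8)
The plan is to treat the piecewise-linear map $\tau_c$ as a symmetry of the entire cluster pattern and to exploit the finiteness of its orbits on $\Pi(c)$. The structural input I would record first, from \cite{Yang_ClusterAlgebrasOfFiniteTypeViaCoxeterElementsAndPrincipalMinors}, is that the permutation $\tau_c$ of $\Pi(c)$ is realized by a fixed sequence of mutations (a combinatorial shadow of an Auslander--Reiten-type translation acting on $\bg$-vectors); consequently it sends clusters to clusters and carries exchangeable pairs to exchangeable pairs. The crucial refinement is that the induced piecewise-linear self-map of $\Lambda$ is \emph{linear on each maximal cone of the $\bg$-vector fan}, i.e.\ on each cluster cone. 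Since a cluster monomial supported in a common cluster has weight lying in one such cone, the cluster-monomial bijection $x_\nu \mapsto \nu$ of Theorem \ref{TheoremBijectionGVectors} is $\tau_c$-equivariant: the exchange relation for $(\lambda,\mu)$ is carried by $\tau_c$ to the exchange relation for $(\tau_c\lambda,\tau_c\mu)$, with the two monomial weights transformed by the (piecewise-linear) $\tau_c$.

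Granting this, I would establish the two-element count as follows. In the coefficient-free algebra $\mcA(B_c)$ every exchange relation between exchangeable $x_\lambda,x_\mu$ is a binomial $x_\lambda x_\mu = m_1+m_2$ with $m_1,m_2$ distinct cluster monomials; write $\{\nu_1,\nu_2\}$ for their weights under $\psi$. The key \emph{local} claim is that for any exchangeable pair the honest weight-sum is one of the two monomial weights, $\lambda+\mu\in\{\nu_1,\nu_2\}$; this I would verify directly from the initial exchange matrix $B_c$, using that $\psi(x_\lambda)+\psi(x_\mu)=\lambda+\mu$ and that exactly one monomial of the binomial carries this weight. Applying the local claim to the pair $(\tau_c^k\lambda,\tau_c^k\mu)$ and using equivariance, the honest sum $\sigma_k:=\tau_c^k\lambda+\tau_c^k\mu$ lies in $\{\tau_c^k\nu_1,\tau_c^k\nu_2\}$ for every $k$, whence $\tau_c^{-k}(\sigma_k)\in\{\nu_1,\nu_2\}$; this already gives the \emph{upper} bound of two values. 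Because $\tau_c$ has finite order on $\Pi(c)$, the pair $(\tau_c^k\lambda,\tau_c^k\mu)$ is periodic in $k$, so only finitely many values occur, and one checks that a second value is genuinely attained (at $k=0$ one gets $\lambda+\mu$; another instant yields the other). The set is therefore exactly $\{\nu_1,\nu_2\}$, which has two elements. Declaring $\lambda+\mu$ the distinguished element and $\lambda\uplus_c\mu$ the other gives the exchange relation $x_\lambda x_\mu=x_{\lambda+\mu}+x_{\lambda\uplus_c\mu}$ of Theorem \ref{Theorem_GenPrimExchRel}.

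For the primitivity criterion I would argue via $\tau_c$-invariance. In the coefficient-free setting a relation is primitive precisely when one monomial is the empty product $1$, i.e.\ when $0\in\{\nu_1,\nu_2\}$; since $\tau_c$ fixes $0$, both the properties ``primitive'' and ``$\tau_c$-adjacent'' (meaning $\mu=\tau_c\lambda$ or $\lambda=\tau_c\mu$) are preserved upon replacing $(\lambda,\mu)$ by $(\tau_c\lambda,\tau_c\mu)$. From $0\in\{\nu_1,\nu_2\}$ and the orbit description above one obtains some $k$ with $\sigma_k=0$, that is $\tau_c^k\mu=-\tau_c^k\lambda$; rotating so that $k=0$, primitivity becomes $\mu=-\lambda$. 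Finally, analyzing $\Pi(c)$ shows that the only antipodal exchangeable pairs $\{\nu,-\nu\}\subseteq\Pi(c)$ are of the form $\{\omega_i,-\omega_i\}$, which are exactly the $\tau_c$-adjacent pairs sitting on the wall where $\tau_c$ is non-linear. Undoing the rotation then yields the claimed equivalence.

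The main obstacle is the local claim together with the $\tau_c$-equivariance of the exchange structure: once these hold, the inclusion $\tau_c^{-k}(\sigma_k)\in\{\nu_1,\nu_2\}$ makes the ``at most two'' bound automatic, and only the existence of a second value must be checked (it is this point that fails in the degenerate type $A_1$ situation, where the two terms coincide, explaining the separate treatment). The genuine difficulty is thus entirely structural, namely that $\tau_c$ is a true symmetry of the cluster pattern whose domains of linearity are the cluster cones; this is the substantive content of \cite{Yang_ClusterAlgebrasOfFiniteTypeViaCoxeterElementsAndPrincipalMinors} and is reproved polyhedrally in \cite{Stella_Polyhedral_Models}. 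I would isolate the remaining bookkeeping as a rank-two reduction, verifying that each exchangeable pair lies in a unique rank-two sub-pattern on which $\tau_c$ restricts to a single cyclic orbit, and sanity-checking the whole argument against the type $A_2$ pentagon, where all five exchangeable pairs are $\tau_c$-adjacent and all relations are primitive.
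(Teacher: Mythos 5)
First, a structural point: the paper does not prove this statement at all. Theorem \ref{Theorem_GenPrimExchRel} is imported verbatim, with attribution, from \cite[Theorem 1.5]{Yang_ClusterAlgebrasOfFiniteTypeViaCoxeterElementsAndPrincipalMinors} and \cite[Corollary 4.2]{Stella_Polyhedral_Models}, and is only ever used as a black box (in \S\ref{sec:root}, \S\ref{sec:TO}, and \S\ref{sec:exceptional}). So there is no internal proof to compare your sketch against; it has to be judged as an attempted reconstruction of the cited results, and as such it has two genuine gaps.

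The first gap is your ``key local claim'' that for every exchangeable pair $(\lambda,\mu)$ the weight $\lambda+\mu$ is the weight of one of the two monomials in the exchange relation. This claim \emph{is} the substantive content of the theorem (granting it, the rest is orbit bookkeeping, as you note), and your proposed verification ``directly from the initial exchange matrix $B_c$'' cannot work: $\tau_c$-equivariance only moves a pair within its $\tau_c$-orbit, and there are $\tau_c$-orbits of exchangeable pairs containing no mutation of the initial seed. Concretely, in type $A_3$ (hexagon model, $\tau_c$ acting as the rotation $[i,j]\mapsto[i+1,j+1]$), the $15$ exchangeable pairs form three $\tau_c$-orbits (short--short, short--diameter, diameter--diameter), while any initial triangulation has only three seed mutations; checking the three kinds of triangulations of $\bP_6$ (fan, snake, inner triangle), their seed mutations never meet all three orbits. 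For the missed orbit the claim is not an initial-seed computation; it needs exactly the $\bg$-vector machinery (sign coherence, linearity of $\tau_c$ on cluster cones, behaviour of $\bg$-vectors under exchange) whose finite-type form is the Yang--Zelevinsky/Stella theorem you are trying to prove, so as written the argument assumes its conclusion. The second gap is in the primitivity criterion: you reduce it to the assertion that every antipodal exchangeable pair $\{\nu,-\nu\}\subseteq\Pi(c)$ is of the form $\{\omega_i,-\omega_i\}$, justified only by ``analyzing $\Pi(c)$.'' No such analysis is given, and the natural argument --- antipodal implies the relation contains $x_0=1$, hence is primitive, hence $\tau_c$-adjacent --- is circular, since it invokes the very criterion being established. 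What must actually be excluded is an exchangeable antipodal pair with $\tau_c(\nu)\neq-\nu$ and $\tau_c(-\nu)\neq\nu$; nothing in the sketch does this, and it is precisely the kind of case analysis (via rank-two subpatterns, say) that the cited references carry out.
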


\section{Combinatorial Models}\label{sec:comb}
\subsection{Notation}
In this section, we will recall combinatorial models for certain cluster algebras of types $A_n$, $B_n$, $C_n$, and $D_n$ related to triangulations of regular polygons. These cluster algebras come equipped with a choice of frozen variables; to recover the case with no frozen variables, one can simply set all frozen variables to be $1$.

Before diving into the model for each type, we define some universal notation about regular polygons.
Let $N\geq 4$ be an integer. 
Denote by $\bP_N$ a regular $N$-gon, with vertices $\{1,2,\ldots, N\}$ modulo $N$ labeled in the counterclockwise direction.
An \emph{edge} of $\bP_N$ is a line segment joining two adjacent vertices.
Unless otherwise specified, a \emph{diagonal} is a line segment joining two non-adjacent vertices.
Denote by $[i,j]$ an edge or a diagonal with endpoints $i$ and $j$.
For any two diagonals, we say that they \emph{cross} each other or there is a \emph{crossing} between them if they are distinct and have a common interior point. 

When $N$ is even, we define a \emph{diameter} to be a diagonal joining two antipodal vertices. In this case, given a vertex $i$, we denote by $\overline i$ its antipodal vertex. We also use this notation for diagonals and edges, i.e. $\overline{[i,j]}=[\overline{i},\overline{j}]$.

For any vertices $i$ and $j$ of $\bP_N$, we define an \emph{arc} between $i$ and $j$ to be an undirected path on the boundary of the polygon connecting $i$ and $j$. 
The \emph{length} of an arc is the number of edges in the path.
Note that for any two vertices $i$, $j$ in $\bP_{N}$, there are two distinct arcs with endpoints $i$ and $j$.
If they have different lengths, the longer arc is called the \emph{major arc}, and the shorter arc is called the \emph{minor arc}.
We also define the \emph{length} of a diagonal $[i,j]$ to be the minimal length of an arc in $\bP_N$ between $i$ and $j$.
See Example \ref{Example_Diag_Arc_Universal} for an illustration of the notion of arcs and the length of a diagonal.

\begin{example}
	    \label{Example_Diag_Arc_Universal}
In Figure \ref{Figure_Example_Diam_Crossing}, edges of $\bP_8$ are denoted by dashed lines.
The diagonal $[1,5]$ is a diameter, while the diagonal $[3,8]$ is not.
The two diagonals $[1,5]$ and $[3,8]$ cross each other.
Figure \ref{Figure_Example_Diag_Arc} shows the diagonal $[1,6]$. 
Its minor arc and major arc are denoted by grey lines and black dashed lines respectively.
In particular, this diagonal is of length $3$.
    \begin{figure}[h]
        \centering
	\subcaptionbox{Crossing diagonals\label{Figure_Example_Diam_Crossing}
}{\includegraphics[height=4cm]{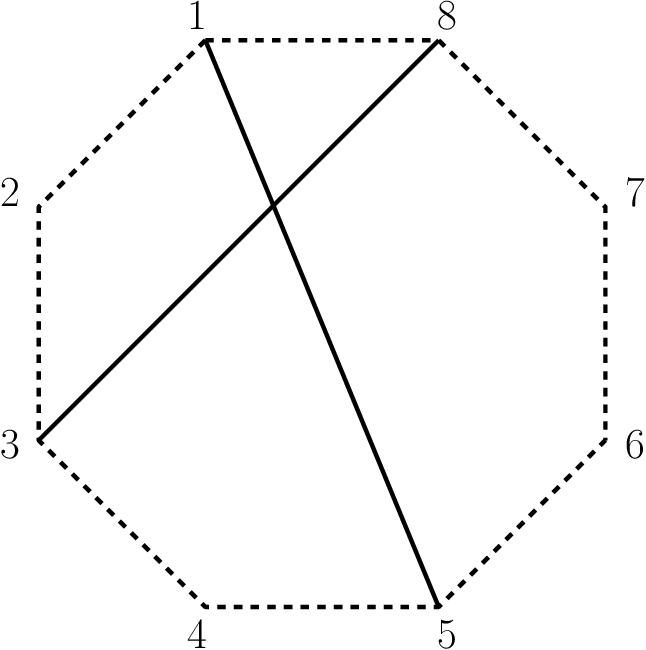}}\hspace{1cm}
	\subcaptionbox{A diagonal and its arcs\label{Figure_Example_Diag_Arc} }
	{\includegraphics[height=4cm]{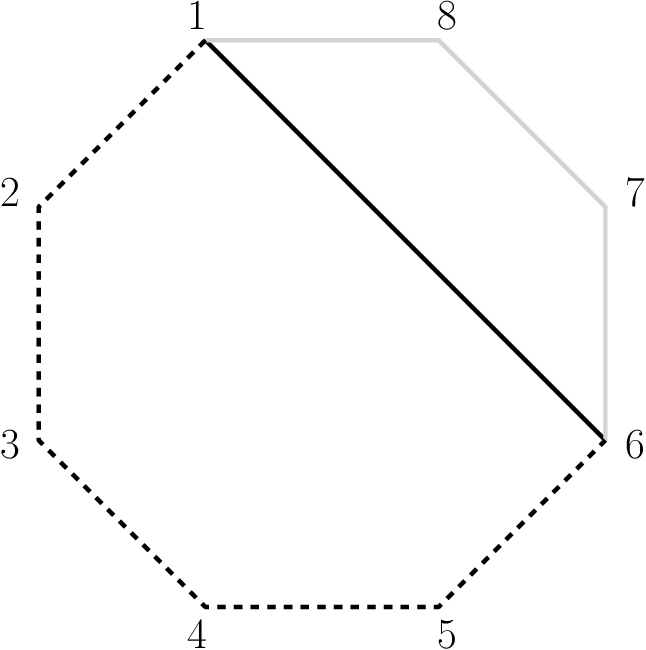}}
    \caption{The polygon $\bP_8$}
	    \end{figure}
\end{example}

\subsection{Type $A_n$}
We first describe the combinatorial model for type $A_n$. 
A \emph{triangulation} in $\bP_{N}$ is a maximal set of non-crossing diagonals of $\bP_{N}$.

\begin{proposition}[{\cite[Proposition 12.5]{Fomin_IIFiniteTypeClassification}}, {\cite[Proposition 3.14]{Fomin_YSystemsAndGeneralizedAssociahedra}}]\label{PropModelAn}

Let $n\geq 1$.
There exists a cluster algebra $\mathcal{A}$ of type $A_n$, a bijection between cluster variables of $\mathcal{A}$ and diagonals $\bP_{n+3}$, and a bijection between frozen variables of $\mathcal{A}$ and edges of $\bP_{n+3}$ satisfying the following:
\begin{enumerate}[label=(\arabic*)]
    \item 
    Clusters are in bijection with triangulations of $\bP_{n+3}$.
    
    \item 
    For a diagonal or an edge $l$, denote the corresponding cluster or frozen variable by $x_l$. 
    For diagonals or edges $l$ and $k$,
    \[
    (x_l||x_k)=\begin{cases}
        1 & \text{ if $l$ and $k$ cross,}\\
        0 & \text{ otherwise.}
    \end{cases}
    \]
    \item 
Given crossing diagonals $[a,c]$ and $[b,d]$, there is an
 associated exchange relation  
    \begin{equation}\label{Equation_Model_An}
        x_{[a,c]}x_{[b,d]}=x_{[a,b]}x_{[c,d]}+x_{[a,d]}x_{[b,c]},
    \end{equation} 
    and all exchange relations are of this form.
\end{enumerate}
\end{proposition}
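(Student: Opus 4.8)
The plan is to realize the model by starting from a single, explicitly chosen triangulation, identifying the resulting cluster algebra as type $A_n$, and then transporting all of the structure through the correspondence between diagonal flips and seed mutations. First I would fix the \emph{fan} triangulation $T_0$ of $\bP_{n+3}$ (all diagonals emanating from a single vertex), assign an indeterminate $x_\ell$ to each of its $n$ diagonals and to each of its $n+3$ edges, and write down the extended exchange matrix $\tB$ whose nonzero entries record, with signs dictated by the counterclockwise orientation of the triangles, which pairs among the diagonals and edges bound a common triangle of $T_0$. For the fan triangulation the mutable part $B$ is an orientation of the type $A_n$ Dynkin path, so its Cartan counterpart $A(B)$ is the $A_n$ Cartan matrix; by Theorem \ref{TheoremFiniteTypeClassification} the algebra $\mcA=\mcA(\tB)$ is then of finite cluster type $A_n$. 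This pins down the cluster algebra together with the bijection on the initial seed.

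The core step is a purely local lemma: for any triangulation $T$ and any diagonal $\ell=[a,c]$ of $T$, the two triangles of $T$ adjacent to $\ell$ form a quadrilateral $abcd$ with vertices in cyclic order, the \emph{flip} replaces $\ell$ by the crossing diagonal $\ell'=[b,d]$, and the mutation $\mu_\ell$ of the seed of $T$ produces exactly the seed of the flipped triangulation $T'$. I would verify this by a direct local computation: the matrix mutation rule reproduces the quiver of $T'$, and the exchange relation \eqref{EqtExchangeRelation} specializes to the Ptolemy relation $x_{[a,c]}x_{[b,d]}=x_{[a,b]}x_{[c,d]}+x_{[a,d]}x_{[b,c]}$ of \eqref{Equation_Model_An}, obtained by matching the two monomials against the positive and negative entries of the column of $\tB$ indexed by $\ell$. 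Since the flip graph of triangulations of $\bP_{n+3}$ is connected and mutations generate all seeds, this correspondence shows that the seeds of $\mcA$ are exactly those attached to triangulations, so clusters correspond to triangulations, giving (1). A diagonal appearing in two flip-adjacent triangulations carries the same variable, so $\ell\mapsto x_\ell$ is well defined on all diagonals and surjective onto the cluster variables; as the number of diagonals of $\bP_{n+3}$ equals $n(n+3)/2$, which is the known count of cluster variables of type $A_n$, the map is a bijection, and likewise edges biject with frozen variables.

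For (2), two diagonals are non-crossing precisely when they lie in a common triangulation, i.e.\ their cluster variables are compatible, forcing compatibility degree $0$; two crossing diagonals $[a,c],[b,d]$ are, upon choosing a triangulation in which the triangles $\triangle abc$ and $\triangle acd$ both appear, related by a single flip, hence exchangeable, forcing compatibility degree $1$. Since every pair of diagonals is either crossing or non-crossing, this determines the compatibility degree on all pairs, and the convention $(x||y)=0$ when one of $x,y$ is frozen handles the edges. Finally, (3) is immediate from the core lemma: every exchange relation of $\mcA$ arises from a mutation, hence from a flip, and every flip yields a relation of the form \eqref{Equation_Model_An}.

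The main obstacle is the core lemma of the second paragraph, and within it the \emph{consistency} that a diagonal acquires a single well-defined variable independent of the mutation path reaching it; this is precisely the content that makes the model coherent. I would secure it by combining the local flip/mutation computation with the connectedness of the flip graph (so that any two triangulations sharing $\ell$ are joined by flips fixing $\ell$) together with the injectivity obtained from the cardinality count.
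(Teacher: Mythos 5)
Note first that the paper does not prove this proposition at all: it is imported verbatim from the cited sources \cite{Fomin_IIFiniteTypeClassification} and \cite{Fomin_YSystemsAndGeneralizedAssociahedra}, so your proposal can only be measured against the standard literature argument, whose outline (fan triangulation as initial seed, flip $=$ mutation, Ptolemy exchange relation, connectedness of the flip graph) you reproduce correctly. However, there is a genuine gap at exactly the point you yourself flag as the main obstacle: the well-definedness of $\ell\mapsto x_\ell$. Your construction only produces a seed for each \emph{flip path} starting at $T_0$, and the danger is that two different paths reaching the \emph{same} triangulation could assign different variables to its diagonals. Neither of your two proposed tools closes this. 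The connectedness of the set of triangulations containing $\ell$ under $\ell$-fixing flips only shows that the variable of $\ell$ is unchanged along $\ell$-fixing \emph{extensions of a single path}; it says nothing about two unrelated paths arriving at the same triangulation. And the cardinality count does not force the assignment to factor through diagonals: if $V_\ell$ denotes the set of all variables attached to $\ell$ by the various paths, surjectivity and the count only give $\bigl|\bigcup_\ell V_\ell\bigr|=n(n+3)/2$, which is the number of diagonals; without knowing in addition that the $V_\ell$ are pairwise \emph{disjoint} (i.e.\ that one cluster variable cannot be attached to two distinct diagonals by different paths), this is perfectly consistent with some $|V_\ell|\geq 2$.

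The cited proofs avoid the issue by constructing the candidate variables globally \emph{before} any mutation argument: one realizes the variables as the Pl\"ucker coordinates of $\mathrm{Gr}(2,n+3)$, which satisfy all three-term Grassmann--Pl\"ucker (Ptolemy) relations simultaneously, assigns to every triangulation $T$ the a priori defined seed consisting of the Pl\"ucker coordinates of its diagonals together with the signed adjacency matrix $B(T)$, and then checks that flips carry these already-existing seeds to one another under mutation; path-independence is then automatic, and connectedness of the flip graph shows these seeds exhaust all seeds of $\mcA$. Alternatively, you could repair your intrinsic approach by proving path-independence directly: every cycle in the flip graph is generated by the two-dimensional faces of the associahedron, which are squares (flips in disjoint quadrilaterals) and pentagons (flips in overlapping quadrilaterals), so it suffices to check that mutations in directions $k,\ell$ with $b_{k\ell}=0$ commute and that the $A_2$ pentagon periodicity holds --- both finite local computations. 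Either of these closes the gap; as written, your proposal does not.
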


\noindent In this setting, $\mathcal{A}$ is precisely the coordinate ring of the Grassmannian $\mathrm{Gr}(2,n+3)$ in its Plücker embedding. See \cite[Proposition 12.7]{Fomin_IIFiniteTypeClassification}.

\begin{example}\label{ex:A1}
	When $n=1$, the resulting cluster algebra $\mcA$ has cluster variables $x_{[1,3]},x_{[2,4]}$ and frozen variables $x_{[1,2]},x_{[2,3]},x_{[3,4]},x_{[1,4]}$. There is exactly one exchange relation
	\[
		x_{[1,3]}x_{[2,4]}=x_{[1,2]}x_{[3,4]}+x_{[2,3]}x_{[1,4]}.
	\]
	Taking the coordinates in $\RR^{\mcV\cup\mcW}$ in the order mentioned above, it is straightforward to calculate that the Gr\"obner cone $\mcA$ has lineality space generated by
\[
(0,1,1,1,0,0),\qquad (1,0,0,1,1,0),\qquad (0,1,0,0,1,1),\qquad (1,0,1,0,0,1)
\]
and rays generated by 
\[
(0,0,-1,0,0,0),\qquad (0,0,0,-1,0,0).
\]
\end{example}

\subsection{Types $B_n$ and $C_n$}
Next, we describe combinatorial models for types $B_n$ and $C_n$. These two types will share the same model, with small differences in the exchange relations. 
We will use the same choice of frozen variables for both type $B_n$ and type $C_n$. 

Consider the action of a $180^\circ$ rotation on the diagonals of $\bP_{2n+2}$. 
Each orbit of this action is either a diameter or a pair of centrally symmetric non-diameter diagonals $\{l,\overline{l}\}$.
We call these orbits \emph{diagonal pairs}, and diameters are considered as degenerate pairs, i.e., $\{l,\overline{l}\}$ with $l=\overline{l}$.

A \emph{centrally symmetric triangulation} in $\bP_{2n+2}$ is a triangulation that is fixed by the $180^\circ$ rotation action, or equivalently, a triangulation formed by non-crossing diagonal pairs. 
We set $r=1$ for type $B_n$ and $r=2$ for type $C_n$.

\begin{proposition}[{\cite[Proposition 12.9]{Fomin_IIFiniteTypeClassification}},{\cite[Proposition 3.15]{Fomin_YSystemsAndGeneralizedAssociahedra}}]\label{PropModelBCn}

Let $n\geq 2$ and $\Gamma=B_n$ or $\Gamma=C_n$.
There exists a cluster algebra $\mathcal{A}$ of type $\Gamma$, a bijection between cluster variables of $\mathcal{A}$ and the diagonal pairs of $\bP_{2n+2}$, and a bijection between frozen variables of $\mathcal{A}$ and centrally symmetric pairs of edges of $\bP_{2n+2}$ satisfying the following:

\begin{enumerate}[label=(\arabic*)]
    \item 
    Clusters are in bijection with centrally symmetric triangulations of $\bP_{2n+2}$.
    \item 
    For a diagonal pair or a pair of edges $\{l,\overline{l}\}$, denote the corresponding cluster or frozen variable by $x_l$ or $x_{\overline{l}}$.
    Let $L=\{l,\overline{l}\}$ and $K=\{k,\overline{k}\}$ be two diagonal pairs or pairs of edges.
    In type $B_n$, $(x_{l}||x_{k})$ equals the number of crosses between $l$ and the diagonal(s) in $K$. 
    In type $C_n$, the roles are reversed from the case of type $B_n$, i.e., $(x_{l}||x_{k})$ equals the number of crosses between $k$ and the diagonal(s) in $L$.
    \item 
The exchange relations are exactly the following:\\
    For $a,b,c,d,\overline a$ in counterclockwise order,
        \begin{equation}\label{Equation_Model_Bn_1}
        x_{[a,c]}x_{[b,d]}=x_{[a,b]}x_{[c,d]}+x_{[a,d]}x_{[b,c]};   
        \end{equation}
        
        For $a,b,c,\overline a$ in counterclockwise order,
        \begin{equation}\label{Equation_Model_Bn_2}
        x_{[a,c]}x_{[a,\overline{b}]}=x_{[a,b]}x_{[a,\overline{c}]}+x_{[a,\overline{a}]}^{2/r}x_{[b,c]}; 
        \end{equation}
       
        For $a\neq b$,
        \begin{equation}\label{Equation_Model_Bn_3}
        x_{[a,\overline{a}]}x_{[b,\overline{b}]}=x_{[a,b]}^{r}+x_{[a,\overline{b}]}^{r}.
        \end{equation}
\end{enumerate}

\end{proposition}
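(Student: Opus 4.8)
The plan is to obtain the type $B_n$ and $C_n$ models by \emph{folding} the type $A_{2n-1}$ model of Proposition \ref{PropModelAn} along the central symmetry of the polygon. Let $\sigma$ denote the $180^\circ$ rotation of $\bP_{2n+2}$, i.e.\ the involution $i\mapsto \overline i$. Since $\sigma$ is a symmetry of the polygon it preserves adjacency, crossings, triangulations, and the exchange relations \eqref{Equation_Model_An}, so it induces an automorphism of the cluster structure of the type $A_{2n-1}$ cluster algebra attached to $\bP_{2n+2}$. The $\sigma$-orbits of diagonals are exactly the diagonal pairs $\{l,\overline l\}$ (with diameters as the fixed orbits), the $\sigma$-orbits of edges are the centrally symmetric edge pairs, and the $\sigma$-invariant triangulations are precisely the centrally symmetric triangulations. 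A count confirms that the number $n(n+1)$ of diagonal pairs agrees with the number of cluster variables (almost positive roots) in types $B_n$ and $C_n$, so this is the correct bookkeeping.

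First I would produce the initial seed. Fix a centrally symmetric triangulation $T$ of $\bP_{2n+2}$ and let $B'$ be the exchange matrix of the corresponding seed of the $A_{2n-1}$ algebra; by $\sigma$-equivariance $B'$ descends to a folded $n\times n$ matrix $B$ whose rows and columns are indexed by the diagonal pairs in $T$, obtained by summing the entries of $B'$ over $\sigma$-orbits. A direct check shows that the Cartan counterpart $A(B)$ is the Cartan matrix of type $B_n$ or $C_n$, the two cases corresponding to the two ways of normalizing the fold and recorded by the parameter $r$. Theorem \ref{TheoremFiniteTypeClassification} then yields a cluster algebra $\mcA=\mcA(B)$ of the claimed type. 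Tracking mutations shows that each mutation of $\mcA$ corresponds to simultaneously flipping the diagonals of a $\sigma$-orbit inside a centrally symmetric triangulation; this gives the bijection between clusters and centrally symmetric triangulations in (1), and hence the bijections of cluster variables with diagonal pairs and of frozen variables with centrally symmetric edge pairs.

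Next I would read off the compatibility degree in (2) from the type $A_{2n-1}$ formula, in which compatibility degree equals the number of crossings. Under folding, the compatibility degree of two orbits is computed by taking a single representative of one orbit and counting its crossings with \emph{all} diagonals of the other orbit; whether the single representative is drawn from $L$ or from $K$ is exactly the asymmetry recorded by $r$, and reflects the two possible normalizations of the skew-symmetrizer $D$, whose entry on a diameter orbit differs by a factor of $2$ from its entry on a size-two orbit. This asymmetry is the Langlands duality between $B_n$ and $C_n$.

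Finally, the exchange relations in (3) arise by classifying the $\sigma$-orbits of the type $A$ relations \eqref{Equation_Model_An} according to the position of the flipped quadrilateral relative to the center: a quadrilateral disjoint from the center (vertices $a,b,c,d,\overline a$ in counterclockwise order) folds to \eqref{Equation_Model_Bn_1}, which looks like the type $A$ relation; a quadrilateral abutting a diameter ($a,b,c,\overline a$ in order) folds to \eqref{Equation_Model_Bn_2}; and the flip exchanging two diameters folds to \eqref{Equation_Model_Bn_3}. The exponents $r$ and $2/r$ record how a size-one (diameter) orbit and a size-two orbit interact when two $\sigma$-related $A$-relations are combined. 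I expect the main obstacle to be exactly this exponent bookkeeping, together with verifying that the list \eqref{Equation_Model_Bn_1}--\eqref{Equation_Model_Bn_3} is \emph{complete}, i.e.\ that no further relations appear; this completeness is what the cited results \cite[Proposition 12.9]{Fomin_IIFiniteTypeClassification} and \cite[Proposition 3.15]{Fomin_YSystemsAndGeneralizedAssociahedra} ultimately guarantee.
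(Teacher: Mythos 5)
The paper itself does not prove Proposition \ref{PropModelBCn}; it imports the statement from \cite{Fomin_IIFiniteTypeClassification} and \cite{Fomin_YSystemsAndGeneralizedAssociahedra}. So your attempt must be judged on its own merits, and it contains a genuine gap: the type $B_n$ half of the statement cannot be obtained by your construction. Folding the type $A_{2n-1}$ model on $\bP_{2n+2}$ by the central symmetry $\sigma$ (i.e.\ specializing $x_{\overline{l}}\mapsto x_l$) is indeed the standard route to the type $C_n$ model: the type $A$ relation for two crossing diameters descends to $x_{[a,\overline{a}]}x_{[b,\overline{b}]}=x_{[a,b]}^2+x_{[a,\overline{b}]}^2$, and the relation for the quadrilateral $a,b,c,\overline{a}$ descends to $x_{[a,c]}x_{[a,\overline{b}]}=x_{[a,b]}x_{[a,\overline{c}]}+x_{[a,\overline{a}]}x_{[b,c]}$, which are exactly \eqref{Equation_Model_Bn_3} and \eqref{Equation_Model_Bn_2} with $r=2$. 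But no identification of variables can ever yield the $r=1$ relations: a specialization sending variables to variables preserves the degree of every monomial, whereas in type $B_n$ the right-hand side of \eqref{Equation_Model_Bn_3} consists of the \emph{degree-one} terms $x_{[a,b]}+x_{[a,\overline{b}]}$ and the relation \eqref{Equation_Model_Bn_2} contains the \emph{degree-three} term $x_{[a,\overline{a}]}^{2}x_{[b,c]}$, while all monomials in the type $A$ relations \eqref{Equation_Model_An} have degree two. Your appeal to "the two ways of normalizing the fold" only produces the two transposed Cartan counterparts at the matrix level; it does not produce a second cluster-variable specialization, so the $B_n$ model does not follow. The standard fix is to fold a different parent: the type $D_{n+1}$ model of Proposition \ref{PropModelDn}, which lives on the same polygon $\bP_{2n+2}$, along the symmetry exchanging the two colors of diameters. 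Under the identification $x_{\widehat{[a,\overline{a}]}}\mapsto x_{[a,\overline{a}]}$, relations \eqref{Equation_Model_Dn_2} and \eqref{Equation_Model_Dn_3} descend precisely to \eqref{Equation_Model_Bn_2} and \eqref{Equation_Model_Bn_3} with $r=1$ (for $n\geq 3$; the case $n=2$ is covered by $B_2=C_2$).

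Two further issues. First, folding of cluster algebras is not automatic: you must verify that the action is admissible at every seed, that orbit-mutations of $\sigma$-invariant seeds stay $\sigma$-invariant and exhaust all exchange relations of the folded algebra, and that the folded exchange graph is connected, so that clusters are exactly the centrally symmetric triangulations and no exchange relations beyond \eqref{Equation_Model_Bn_1}--\eqref{Equation_Model_Bn_3} arise; you assert these points rather than prove them, and likewise the compatibility-degree formula in part (2) is asserted rather than derived from the behaviour of compatibility degree under folding. Second, your closing remark defers the completeness of the list of exchange relations to the results of \cite{Fomin_IIFiniteTypeClassification} and \cite{Fomin_YSystemsAndGeneralizedAssociahedra} --- but those citations \emph{are} the proposition you set out to prove, so as written the argument is circular at precisely the step you identify as the main obstacle.
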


In type $B_n$, the cluster algebra $\mcA$ from Proposition \ref{PropModelBCn}
is the coordinate ring of the Grassmannian $\mathrm{Gr}(2,n+2)$ with a different choice of coordinates (\cite[Proposition 12.11]{Fomin_IIFiniteTypeClassification}).
In type $C_n$, $\mcA$ is the coordinate ring of the affine cone over the product $\mathbb{P}^n\times\mathbb{P}^n\subseteq\mathbb{P}^{n^2+2n}$ as the image of the Segre embedding (\cite[Example 6.3.4]{fomin2021introduction6}).

\subsection{Type $D_n$}
Finally, we describe a combinatorial model for type $D_n$. 
We again consider the orbits of the action of a $180^\circ$ rotation on the diagonals of $\bP_{2n}$, 
but each diameter can be of one of two different colors, denoted by $[a,\overline{a}]$ (called \emph{blue diameters} throughout this paper) and $\widehat{[a,\overline{a}]}$ (called \emph{red diameters} throughout this paper). We adopt the convention that two diameters may cross each other only if they are in different colors and have different endpoints. 
Hence, diameters of the same color do not cross (but diameters of any color may cross the non-diameter diagonals), and two common diameters of different colors do not cross as well.
We say that the non-diameters are \emph{plain diagonals} as opposed to the colored diameters.

A \emph{colored symmetric triangulation} in $\bP_{2n}$ is a maximal set of mutually non-crossing diagonal pairs and colored diameters. This is different from the usual definition of triangulations of a polygon because, in this kind of triangulation, we allow possible crossings between diameters of the same color.

\begin{proposition}[{\cite[Proposition 3.16]{Fomin_YSystemsAndGeneralizedAssociahedra}}, {\cite[Proposition 12.14]{Fomin_IIFiniteTypeClassification}}]\label{PropModelDn}
Let $n\geq 4$.
There exists a cluster algebra $\mathcal{A}$ of type $D_n$, a bijection between cluster variables of $\mathcal{A}$ and the set of all diagonal pairs and colored diameters of $\bP_{2n}$, and a bijection between frozen variables of $\mathcal{A}$ and the set of centrally symmetric pairs of edges of $\bP_{2n}$ satisfying the following:
\begin{enumerate}[label=(\arabic*)]
    \item 
   Clusters are in bijection with colored symmetric triangulations of $\bP_{2n}$.
    \item 
    For a diagonal pair or a colored diameter 
    $\{l,\overline{l}\}$, denote the corresponding cluster or frozen variable by $x_l$ or $x_{\overline{l}}$.
    Let $L=\{l,\overline{l}\}$ and $K=\{k,\overline{k}\}$ be diagonal pairs or colored diameters. 
    Then $(x_{l}||x_{k})$ is the number of crosses between $l$ and $k$ if both $l$ and $k$ are diameters. 
    Otherwise, $(x_{l}||x_{k})$ is half the number of crosses between the diagonal(s) in $L$ and in $K$.
    
    \item 
The exchange relations are exactly the following:

For $a,b,c,d,\overline{a}$ in counterclockwise order,
 \begin{equation}\label{Equation_Model_Dn_1}
    x_{[a,c]}x_{[b,d]}=x_{[a,b]}x_{[c,d]}+x_{[a,d]}x_{[b,c]};   
    \end{equation}

    For $a,b,c,\overline{a}$ in counterclockwise order,
    \begin{equation}\label{Equation_Model_Dn_2}
    x_{[a,c]}x_{[a,\overline{b}]}= x_{[a,b]}x_{[a,\overline{c}]}+ x_{[a,\overline{a}]}x_{\widehat{[a,\overline{a}]}}x_{[b,c]};    
    \end{equation}
    
    For $a,b,\overline{a}$ in counterclockwise order, 
    \begin{equation}\label{Equation_Model_Dn_3}
    x_{[a,\overline{a}]}x_{\widehat{[b,\overline{b}]}} = x_{[a,b]}+ x_{[a,\overline{b}]} ;   
    \end{equation}
    
    For $a,b,c,\overline{a}$ in counterclockwise order, 
    \begin{equation}\label{Equation_Model_Dn_4-1}
    x_{[a,\overline{a}]}x_{[b,\overline{c}]}= x_{[a,b]}x_{[c,\overline{c}]}+ x_{[a,\overline{c}]}x_{[b,\overline{b}]};    
    \end{equation}
    
    For $a,b,c,\overline{a}$ in counterclockwise order,    \begin{equation}\label{Equation_Model_Dn_4-2}
        x_{\widehat{[a,\overline{a}]}}x_{[b,\overline{c}]}= x_{[a,b]}x_{\widehat{[c,\overline{c}]}}+ x_{[a,\overline{c}]}x_{\widehat{[b,\overline{b}]}}.
    \end{equation}
\end{enumerate}
\end{proposition}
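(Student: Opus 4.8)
The plan is to reduce the statement to the almost-positive-roots description of finite type cluster algebras together with the finite type classification (Theorem \ref{TheoremFiniteTypeClassification}); the cited Fomin--Zelevinsky results carry this out, so the essential task is to match their output with the polygon combinatorics fixed above. Concretely, I would first choose an explicit initial colored symmetric triangulation $T_0$ of $\bP_{2n}$ containing both a blue and a red diameter on one antipodal pair (these do not cross by convention), arranged so that the two diameter variables $x_{[a,\overline a]}$ and $x_{\widehat{[a,\overline a]}}$ correspond to the two terminal nodes at the fork of the $D_n$ Dynkin diagram. Reading the skew-symmetrizable exchange matrix $B$ off the quiver of $T_0$ and checking that its Cartan counterpart $A(B)$ is of type $D_n$, Theorem \ref{TheoremFiniteTypeClassification} produces a cluster algebra $\mathcal{A}=\mathcal{A}(\tB)$ of finite cluster type $D_n$, where $\tB$ encodes the frozen variables indexed by centrally symmetric edge pairs.

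I would then establish the bijections and claim (1) by identifying mutation with flipping. A first consistency check is the count: $\bP_{2n}$ has $n^2-2n$ non-diameter diagonal pairs and $2n$ colored diameters, totalling $n^2$ objects, which matches the number $n(n-1)+n=n^2$ of almost positive roots of $D_n$, hence the number of cluster variables. One shows that each mutation $\mu_k$ corresponds to a flip of a diagonal pair or to the corresponding move on a colored diameter, that every object is reached from $T_0$ by such moves, and that distinct objects give distinct variables; claim (1) is then the identification of the cluster complex with the complex of colored symmetric triangulations. For claim (2), I would verify the crossing formula by observing that both the abstract compatibility degree and the crossing count are invariant under the rotation of $\bP_{2n}$ implementing $\tau_c$ and agree on an initial configuration, so that it suffices to check the initial conditions of Theorem \ref{Theorem_c-compdeg_def} against crossings on a fundamental domain. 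Claim (3) follows by restricting to rank-$2$ subalgebras: freezing all but two exchangeable variables exhibits each exchange relation as one of the five geometric (Ptolemy-type) configurations \eqref{Equation_Model_Dn_1}--\eqref{Equation_Model_Dn_4-2}.

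I expect the main obstacle to be the diameter bookkeeping special to type $D_n$. A single geometric diameter carries two distinct cluster variables, blue and red, and the convention that same-colored diameters never cross (while differently-colored ones with different endpoints may) must be shown to be consistent both with the compatibility formula in (2) --- the dichotomy between ``the number of crosses'' for two diameters and ``half the number of crosses'' otherwise --- and with the four diameter-involving relations \eqref{Equation_Model_Dn_2}--\eqref{Equation_Model_Dn_4-2}. In particular, one must check that the $D_n$ root combinatorics force the product $x_{[a,\overline a]}x_{\widehat{[a,\overline a]}}$ of \emph{both} colors in \eqref{Equation_Model_Dn_2} together with the linear relation \eqref{Equation_Model_Dn_3}, rather than the single squared diameter variable appearing in the $B_n/C_n$ relation \eqref{Equation_Model_Bn_2}. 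Disentangling the two colors at the fork and tracking them correctly through every flip is the delicate point that separates $D_n$ from the other classical types.
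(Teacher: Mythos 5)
The first thing to say is that the paper contains no proof of Proposition \ref{PropModelDn} to compare against: the proposition is background material, quoted with its proof deferred entirely to the two cited Fomin--Zelevinsky papers. Your proposal can therefore only be measured against those sources, whose strategy it broadly retraces. Its skeleton is sound: the count ($n^2-2n$ non-diameter diagonal pairs plus $2n$ colored diameters, totalling $n^2=n(n-1)+n$ almost positive roots) is correct, the initial colored symmetric triangulation with both colors of one diameter sitting at the fork of the Dynkin diagram is the standard starting point, and proving the compatibility formula in (2) by ``invariance plus initial conditions'' is exactly the mechanism used in the cited works.

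However, judged as a proof there are three concrete gaps. First, the sentence ``one shows that each mutation $\mu_k$ corresponds to a flip\dots, that every object is reached from $T_0$\dots, and that distinct objects give distinct variables'' is not a compressible detail: it is the entire content of the bijection and of claim (1). Fomin and Zelevinsky do not establish this by a direct flip-connectivity and injectivity argument; they deduce it from their structure theory (cluster variables $\leftrightarrow$ almost positive roots, clusters $\leftrightarrow$ maximal pairwise compatible sets), so that (1) becomes a corollary of (2). Your sketch inverts this logical order and, at this step, essentially assumes what is to be proved; a direct inductive proof of flip-connectivity for colored symmetric triangulations would be a substantial piece of work that is nowhere outlined. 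Second, the operation implementing $\tau_c$ is \emph{not} the rotation $[i,j]\mapsto[i+1,j+1]$ alone but rotation composed with swapping the colors of all diameters (cf.\ Remark \ref{rem:tau}). Crossing numbers happen to be invariant under both operations, so the invariance half of your argument survives, but the $\tau_c$-orbits do not coincide with rotation orbits (a blue diameter's $\tau_c$-orbit contains red diameters), and the reduction to ``checking initial conditions of Theorem \ref{Theorem_c-compdeg_def} on a fundamental domain'' must be run with the color-swapping rotation or it checks the wrong pairs. Third, the freezing/rank-2 reduction for claim (3) only yields that each exchange relation is a sum of two monomials; identifying \emph{which} cluster and frozen variables occur with which exponents --- in particular that the product $x_{[a,\overline{a}]}x_{\widehat{[a,\overline{a}]}}$ of both colors appears in \eqref{Equation_Model_Dn_2}, and that the linear relation \eqref{Equation_Model_Dn_3} has no frozen coefficients --- requires knowing the explicit extended exchange matrix attached to each colored symmetric triangulation. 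That is, this step presupposes the mutation-equals-flip dictionary with its matrices rather than following from freezing alone, so as written the argument for (3) is circular in the same way as the argument for (1).
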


\noindent The cluster algebra $\mathcal{A}$ in Proposition \ref{PropModelDn} is the coordinate ring of the Schubert divisor of the affine cone of $\mathrm{Gr}(2,n+2)$(\cite[Example 6.3.5]{fomin2021introduction6}).

\subsection{Exchange Quadrilaterals}\label{sec:quadrilateral}
An important observation that we will use in our analysis is that any exchange relation for a cluster algebra of type $A_n$, $B_n$, $C_n$ or $D_n$ determines a special (pair of) quadrilaterals which we call \emph{exchange quadrilaterals}. We will use this extensively in subsequent sections. 
The definition of these quadrilaterals follows.
See Figure \ref{Figure_Exchange_Quads}  for a depiction of the exchange quadrilaterals for the various types of exchange relations.

\begin{figure}
	\centering
	\subcaptionbox{Type 0 exchange quadrilateral\label{Figure_Exchange_Quad_Type_0}
	}{\includegraphics[height=3.5cm]{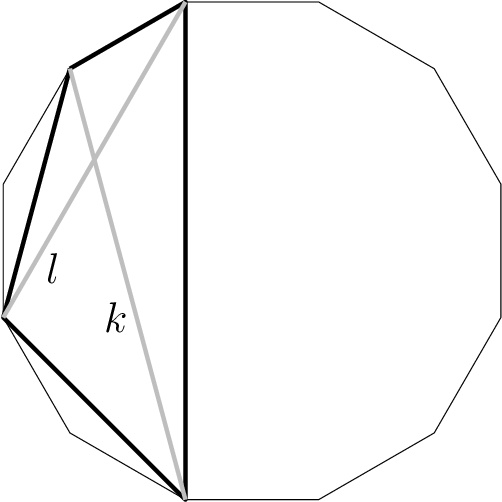}}\hspace{1cm}
	\subcaptionbox{Type 1 exchange quadrilaterals\label{Figure_Exchange_Quad_Type_1} }{\includegraphics[height=3.5cm]{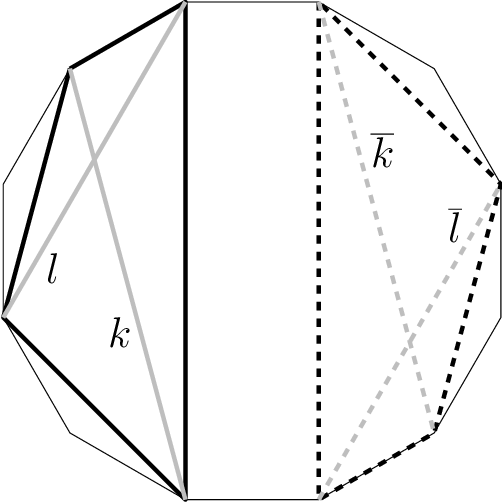}}\hspace{1cm}
	\subcaptionbox{Type 2 exchange quadrilaterals\label{Figure_Exchange_Quad_Type_2} }{\includegraphics[height=3.5cm]{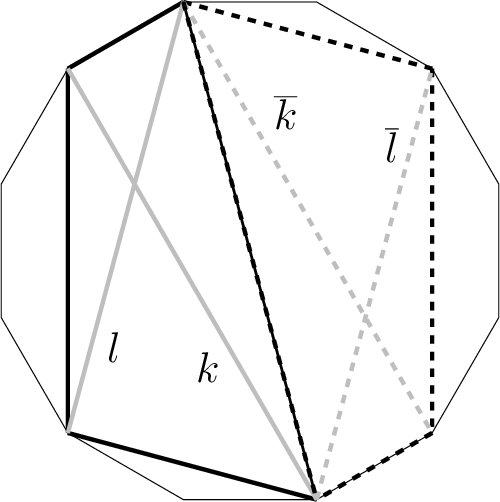}}\hspace{1cm}
	\subcaptionbox{Type 3 exchange quadrilaterals\label{Figure_Exchange_Quad_Type_3} }{\includegraphics[height=3.5cm]{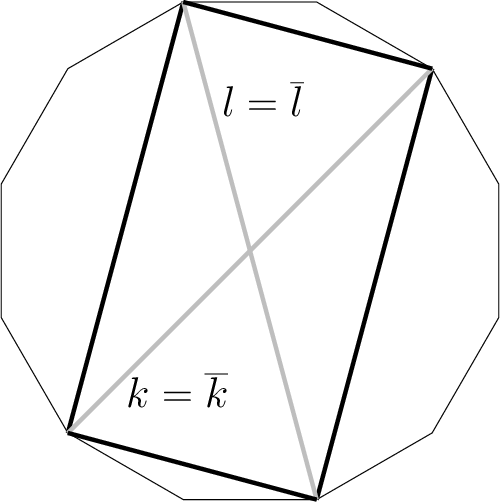}}\hspace{1cm}
	\subcaptionbox{Type 4 exchange quadrilaterals\label{Figure_Exchange_Quad_Type_4} }{\includegraphics[height=3.5cm]{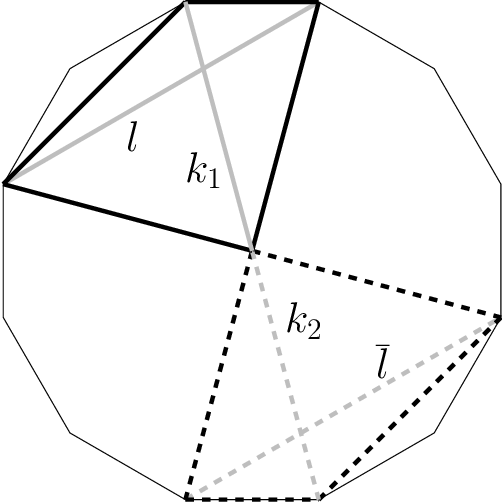}}
	\caption{Exchange quadrilaterals\label{Figure_Exchange_Quads}}
\end{figure}

\begin{figure}
	\centering
	\subcaptionbox{Type 1 exchange quadrilateral
	}{\includegraphics[height=4cm]{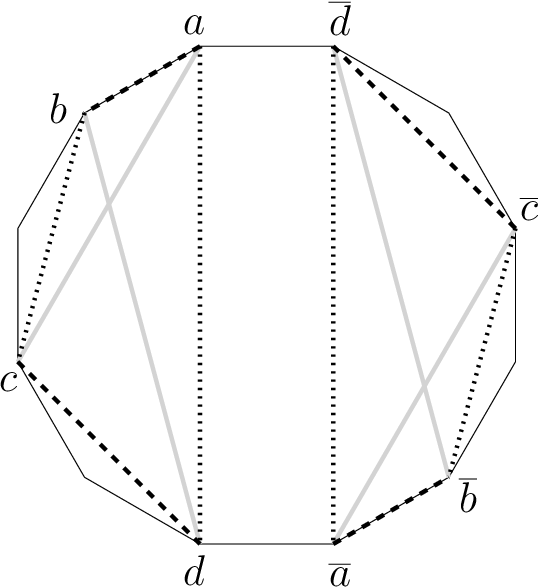}}\hspace{1cm}
	\subcaptionbox{Type 4 exchange quadrilaterals}
	{\includegraphics[height=4cm]{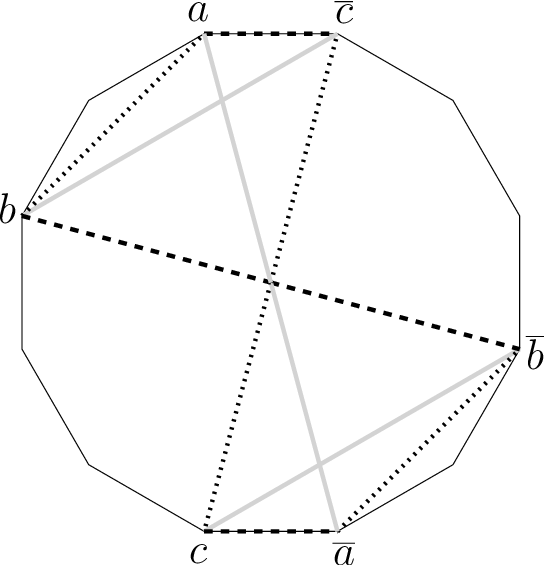}}
	\caption{Opposite edges of exchange quadrilaterals\label{Figure_Exchange_Quads_OppSide}}
\end{figure}

For exchange relations $P$ as in \eqref{Equation_Model_An}, the cluster variables being exchanged correspond in the combinatorial model to crossing diagonals $l$ and $k$; the associated exchange quadrilateral $\bQ$ is the convex hull of these diagonals. We call this a \emph{type 0} exchange quadrilateral.

For exchange relations $P$ as in \eqref{Equation_Model_Bn_1}, \eqref{Equation_Model_Bn_2}, \eqref{Equation_Model_Bn_3}, \eqref{Equation_Model_Dn_1}, \eqref{Equation_Model_Dn_2}, and \eqref{Equation_Model_Dn_3}, the cluster variables being exchanged correspond to pairs of crossing diagonals $\{l,\overline l\}$ and $\{k,\overline k\}$ with $l,k$ and $\overline l,\overline k$ respectively crossing. The associated pair of exchange quadrilaterals $\bQ,\overline \bQ$ are obtained as the convex hulls of $l,k$ and $\overline l,\overline k$.
If the pair $\bQ, \overline \bQ$ do not overlap, we say the pair is of \emph{type 1}; if they overlap at a diameter, we say it is of \emph{type 2}; if $\bQ = \overline \bQ$, we say it is of \emph{type 3}.

Finally, for exchange relations $P$ in type $D_n$ as in \eqref{Equation_Model_Dn_4-1}, \eqref{Equation_Model_Dn_4-2}, the cluster variables being exchanged correspond to a pair of diagonals $\{l,\overline l\}$ and a diameter $k$. Splitting the diameter $k$ at its midpoint into $k_1,k_2$ such that $l$ crosses $k_1$ and $\overline l$ crosses $k_2$, the pair of exchange quadrilaterals $\bQ,\overline \bQ$  associated to this exchange relation are the convex hulls of $l,k_1$ and $\overline l, k_2$.
We say that the pair $\bQ, \overline \bQ$ is of \emph{type 4}.

While the diagonals of the exchange quadrilaterals $\bQ,\overline \bQ$ for an exchange relation $P$ encode the cluster variables being exchanged in $P$,
the edges of the quadrilaterals encode the cluster and frozen variables appearing on the right hand side of the exchange relation. See Figure \ref{Figure_Exchange_Quads_OppSide}.
Indeed, consider an exchange relation $P$ with associated exchange quadrilateral(s) $\bQ$ (and $\overline \bQ$). For each pair of opposing edges of $\bQ$, the cluster and frozen variables corresponding to these edges are exactly the variables appearing in one of the two terms of right hand side of the exchange relation. Here, for relations of the form  \eqref{Equation_Model_Dn_4-1} or \eqref{Equation_Model_Dn_4-2}, the edges of $\bQ$ that are radii are treated as corresponding to the diameter.
Note that the multiplicity with which each variable appears in an exchange relation is not encoded by $\bQ$, and depends in particular on whether we are in type $B_n$ or $C_n$.

Using exchange quadrilaterals, it is easy to see which exchange relations are primitive: an exchange relation $P$ is primitive if and only if its exchange quadrilateral(s) $\bQ$ (and $\overline \bQ$) have at least one pair of opposing edges that are edges of $\bP_N$ (as opposed to diagonals). We will call any such quadrilateral a \emph{primitive} quadrilateral.
In this case, the cluster and frozen variables corresponding to the other pair of opposing edges of $\bQ$ are the variables appearing in a primitive term of $P$.
In particular, we see that a type 4 exchange in type $D_n$ is never primitive; this will allow us to ignore them in \S\ref{sec:frozen} and \S\ref{sec:nofrozen}.

\section{Term Orders from Compatibility Degree}\label{sec:TO}
\subsection{General Approach}

Our first main result is the following:
\begin{theorem}\label{thm:compdegree}
Let $\mcA$ be a cluster algebra of finite cluster type, $\mcV$ the set of its cluster variables, and $\mcW$ the set of its frozen variables.
Then for any $v\in \mcV$,
\[
\omega_v := \Big[(v||y)\Big]_{y\in \mathcal{V}\cup \mathcal{W}}\in \ZZ^{\mathcal{V}\cup \mathcal{W}}
\]
is an element of $\mcC_\mcA$, the Gröbner cone of $I_\mcA$ with respect to the initial ideal $I_\mcA^\cross$.
\end{theorem}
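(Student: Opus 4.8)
The plan is to dualize. By Lemma~\ref{Lemma_Of_The_Whole_Thesis}, the cone $\mcC_\mcA$ is dual to $\mcC_\prim$, so $\omega_v\in\mcC_\mcA$ holds precisely when $\langle \omega_v, d_P\rangle \geq 0$ for every degree $d_P = \deg(xx')-\deg(y_1)$ induced by a primitive exchange relation $P\colon xx' = y_1 + y_2$ with primitive term $y_1$. Unwinding the pairing, and using that the $y$-component of $\omega_v$ is the compatibility degree $(v||y)$, this amounts to the single scalar inequality
\[
(v||x) + (v||x') \;\geq\; \sum_{w} m_w\,(v||w),
\]
where the sum runs over the variables $w$ (with multiplicity $m_w$) occurring in the primitive term $y_1$. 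Since frozen variables contribute $0$ to both sides, only the cluster variables dividing $y_1$ matter; in particular the $A_1$ case, where both monomials are primitive terms, reduces to two such easy inequalities and can be checked by hand (cf.\ Example~\ref{ex:A1}). Thus the theorem is equivalent to proving this compatibility-degree inequality for every primitive exchange relation, uniformly in the cluster variable $v$.

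For the classical types I would prove this inequality geometrically through the exchange quadrilaterals of \S\ref{sec:quadrilateral}. In the combinatorial models, $(v||w)$ is (half) the number of crossings between the diagonal(s) representing $v$ and those representing $w$. For a primitive relation, the exchanged variables $x,x'$ are the two diagonals of the exchange quadrilateral(s) $\bQ$ (and $\overline\bQ$), while the primitive term $y_1$ is read off from the pair of opposing edges of $\bQ$ that are not edges of $\bP_N$; the complementary pair, which are genuine polygon edges, constitute the all-frozen term $y_2$. The inequality therefore follows from the purely planar statement: \emph{any chord of $\bP_N$ crosses the two diagonals of a convex quadrilateral at least as many times, counted with the appropriate central-symmetry and color bookkeeping, as it crosses either fixed pair of opposing edges}. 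Concretely, if a chord meets one of the two opposing $y_1$-edges it must meet at least one diagonal of $\bQ$, and if it meets both it must meet both diagonals. Carrying this out requires separate checks for exchange quadrilaterals of types $0$--$3$, and for the $B_n$/$C_n$/$D_n$ variants one must track the factor $r$, the half-crossing convention, diameters, and colored diameters so that the multiplicities $m_w$ match. The type~$4$ relations of $D_n$ are never primitive (\S\ref{sec:quadrilateral}) and may be ignored.

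For the exceptional types I would instead work in the weight-lattice model of \S\ref{sec:root}. By Theorem~\ref{Theorem_GenPrimExchRel} the primitive exchange relations are exactly those between weights $\lambda$ and $\tau_c(\lambda)$, with right-hand monomials $x_{\lambda+\mu}$ and $x_{\lambda\uplus_c\mu}$ factoring into cluster variables according to the decomposition of their weights in $\Lambda$; the compatibility degrees $(v||w) = (\psi(v)||\psi(w))_c$ are then computed by the recursive $\tau_c$-invariance recipe following Theorem~\ref{Theorem_c-compdeg_def}. Since $\Pi(c)$ is finite, this reduces the inequality to a finite verification, which I would carry out by computer for $E_6,E_7,E_8,F_4,G_2$. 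The main obstacle is the classical crossing lemma: although geometrically intuitive, making it rigorous demands an exhaustive but delicate case analysis across the quadrilateral types and the three symmetric models, with the central-symmetry doubling and the type-dependent multiplicities $m_w$ being the places most likely to hide an off-by-a-factor error.
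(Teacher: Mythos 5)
Your proposal is correct and is essentially the paper's own proof: dualize via Lemma \ref{Lemma_Of_The_Whole_Thesis}, discard frozen variables since they pair to zero, handle the classical types by a crossing argument on the exchange quadrilaterals of \S\ref{sec:quadrilateral} (this is exactly the content of the paper's Lemma \ref{Lemma_CombFact_QuadCrossing} together with the multiplicity bookkeeping of Lemma \ref{lemma:weighteddegree}), and verify the exceptional types by computer in the weight-lattice model of \S\ref{sec:root} using Theorems \ref{Theorem_c-compdeg_def} and \ref{Theorem_GenPrimExchRel}. The only differences are minor: the paper routes through the stronger Proposition \ref{prop:equality} (a max-equality valid for \emph{all} exchange relations, which it reuses for Corollary \ref{Corollary_ModCompDeg} and Theorem \ref{Therorem_Result2_Main}), whereas you prove only the inequality for primitive relations --- which suffices for the theorem and lets you avoid the paper's most delicate case analysis --- and the paper also records the easy reduction to indecomposable exchange matrices, which your type-by-type organization leaves implicit.
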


To prove Theorem \ref{thm:compdegree}, we will prove:
\begin{proposition}\label{prop:equality}
	Let $\mcA$ be a cluster algebra of finite cluster type. 
Let $P$ be an exchange relation 
\[
x\cdot x'=y_1+y_2
\]
with $x,x'$ exchangeable and $y_1$ and $y_2$ monomials in an extended cluster. Let $v$ be any cluster variable different from $x,x'$ and let $\omega_v$ be as in Theorem \ref{thm:compdegree}.
Then  
\begin{equation}\label{eqn:equality}
	\omega_v\cdot \deg (x\cdot x')= \max \{\omega_v\cdot \deg(y_1),\omega_v\cdot \deg(y_2)\}.
\end{equation}
\end{proposition}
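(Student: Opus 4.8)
The plan is to work entirely inside the Yang--Zelevinsky weight-lattice model of \S\ref{sec:root} and to reduce \eqref{eqn:equality} to a single max-plus identity for a linear functional. First I would reduce to the case with no frozen variables: by our convention $(v||w)=0$ whenever $w$ is frozen, every frozen entry of $\omega_v$ vanishes, so $\omega_v\cdot\deg(m)$ records only the cluster-variable factors of a monomial $m$. The cluster-variable content of $P$ and all the compatibility degrees appearing depend only on the principal exchange matrix $B$ and not on the frozen rows, so both sides of \eqref{eqn:equality} are unchanged upon passing to the cluster algebra with the same $B$ and $\mcW=\emptyset$. Using Theorem \ref{TheoremFiniteTypeClassification} I realize the given type by some $\mcA(B_c)$ and transport $x,x',v$ to weights $\lambda,\mu,\nu\in\Pi(c)$ via the bijection $\psi$ of Theorem \ref{TheoremBijectionGVectors}.

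By Theorem \ref{Theorem_GenPrimExchRel} the relation $P$ then reads $x_\lambda x_\mu=x_{\lambda+\mu}+x_{\lambda\uplus_c\mu}$ with $\lambda,\mu$ exchangeable. Both right-hand monomials are products of variables of the common facet $F$ shared by the two adjacent clusters containing $x_\lambda$ and $x_\mu$, so the weights $\lambda+\mu$ and $\lambda\uplus_c\mu$ are nonnegative integer combinations of $\{\psi(y):y\in F\}$. Writing $f_\nu(\sigma):=\sum_{y\in F}\gamma_y\,(\nu||\psi(y))_c$ for $\sigma=\sum_{y\in F}\gamma_y\psi(y)$, so that $\omega_v\cdot\deg(x_\sigma)=f_\nu(\sigma)$, the claim becomes $(\nu||\lambda)_c+(\nu||\mu)_c=\max\{f_\nu(\lambda+\mu),f_\nu(\lambda\uplus_c\mu)\}$. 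The hypothesis $v\neq x,x'$ enters precisely as $\nu\notin\{\lambda,\mu\}$: otherwise $v$ would be compatible with all of $F$, forcing the right-hand side to be $0$ while the left-hand side equals $1$.

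Next I would reduce the first slot to a fundamental weight. Since $(\cdot||\cdot)_c$ is $\tau_c$-invariant (Theorem \ref{Theorem_c-compdeg_def}), the defining orbit $\{\tau_c^{-k}(\tau_c^k\lambda+\tau_c^k\mu)\}$ of Theorem \ref{Theorem_GenPrimExchRel} is $\tau_c$-invariant, and $\tau_c$ is linear on each maximal cone of the $\bg$-vector fan (hence additive on the cluster monomials supported on a single cluster such as $F$), a suitable power $\tau_c^{N}$ transports the entire identity to the one with $\nu$ replaced by $\tau_c^N\nu$. Choosing $N$ so that $\tau_c^N\nu=\omega_i$ reduces to $\nu=\omega_i$, where the explicit formula $(\omega_i||\lambda)_c=[(c^{-1}-\id)\lambda:\alpha_i]_+$ is available.

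Finally, put $D_i(\lambda):=[(c^{-1}-\id)\lambda:\alpha_i]$, a \emph{linear} functional on $\Lambda$ with $(\omega_i||\lambda)_c=[D_i(\lambda)]_+$ for non-fundamental $\lambda$, and abbreviate $a=D_i(\lambda)$, $b=D_i(\mu)$, $d'=D_i(\lambda\uplus_c\mu)$. The target reduces to the max-plus relation $[a]_+ + [b]_+=\max\{[a+b]_+,[d']_+\}$ after two reductions. Since $\lambda+\mu$ and $\lambda\uplus_c\mu$ are both supported on $F$, I must show the numbers $D_i(\psi(y))$ for $y\in F$ are \emph{sign-coherent}; granting this, each $f_\nu$ collapses to a positive part, giving $f_\nu(\lambda+\mu)=[a+b]_+$ and $f_\nu(\lambda\uplus_c\mu)=[d']_+$ (using linearity of $D_i$). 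The remaining identity is then a finite sign analysis: for $a,b\ge 0$ the $[a+b]_+$ term wins, whereas when $a,b$ have opposite signs or are both nonpositive the value must be supplied by $[d']_+$, which forces me to compute $D_i(\lambda\uplus_c\mu)$ precisely from the orbit description of $\uplus_c$. I expect the sign-coherence of the $D_i(\psi(y))$ on a facet --- equivalently, that the wall $\{D_i=0\}$ avoids the interior of the $\bg$-cone of $F$ --- to be the genuine obstacle. In the classical types $A_n,B_n,C_n,D_n$ it can be read off directly from the crossing-number description of compatibility degree and the exchange-quadrilateral analysis of \S\ref{sec:quadrilateral}, and in the exceptional types it can be settled by the finite computer check used elsewhere in the paper; a uniform argument would instead derive it from the geometry of the $\bg$-fan.
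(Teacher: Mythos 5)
Your strategy is genuinely different from the paper's (which reduces to indecomposable types and then argues case-by-case: exchange quadrilaterals in the polygon models for $A_n,B_n,C_n,D_n$, computer verification for the exceptional types), and several of your reductions are sound: dropping frozen variables works for the reason you give, and transporting the identity along $\tau_c$ to reduce to $\nu=\omega_i$ works because $\tau_c$ preserves compatibility degrees, permutes clusters, and is linear on each cluster cone. The problem is that the lemma on which everything hinges is false as stated. Take type $A_3$ with $c=s_1s_2s_3$, so $b_{12}=b_{23}=1$ and the initial variables $x_1,x_2,x_3$ correspond to $\omega_1,\omega_2,\omega_3$. Writing $D_i(\lambda)=[(c^{-1}-\id)\lambda:\alpha_i]$, one computes $(c^{-1}-\id)\omega_1=-(\alpha_1+\alpha_2+\alpha_3)$, hence $D_2(\omega_1)=-1$; this does not contradict $(\omega_2||\omega_1)_c=0$ because formula (2) of Theorem \ref{Theorem_c-compdeg_def} applies only to non-fundamental weights, and this loophole is exactly what breaks sign-coherence. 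Mutating the initial seed at $2$ gives $x_2'$ with $x_2x_2'=x_1+x_3$; by Theorem \ref{Theorem_GenPrimExchRel} the weight $\psi(x_2')$ must lie in $\Pi(c)$ with $\omega_2+\psi(x_2')\in\{\omega_1,\omega_3\}$, and of the two candidates only $\psi(x_2')=\omega_3-\omega_2=c^2\omega_1$ lies in $\Pi(c)$; then $D_2(\psi(x_2'))=[\alpha_2:\alpha_2]=1$, consistent with $x_2,x_2'$ being exchangeable. Now mutate the cluster $\{x_1,x_2',x_3\}$ at $3$: the mutated matrix has $b'_{13}=1$, $b'_{23}=-1$, so the exchange relation is $x_3x_3''=x_1+x_2'$, whose common facet is exactly $F=\{x_1,x_2'\}$. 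Thus $D_2$ takes the values $-1$ and $+1$ on $\psi(F)$: the wall $\{D_2=0\}$ does cut through the interior of the cone spanned by the $\bg$-vectors of a facet, so ``granting'' your sign-coherence claim means granting a false statement.

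What your argument actually needs is weaker---sign-coherence of $D_i$ on the support of \emph{each} monomial $y_1$, $y_2$ separately (in the example above each support is a singleton, so there is no conflict with Proposition \ref{prop:equality} itself)---but you neither isolate nor prove that statement, and it is not obviously easier than the proposition: your proposed fallback (reading it off the crossing model in classical types, computer check in exceptional types) reproduces exactly the case-by-case structure of the paper's proof and forfeits the uniformity that motivated your approach. Separately, the max-plus endgame is incomplete even granting the corrected collapse: for $D_i(\lambda),D_i(\mu)\ge 0$ you still must prove $[D_i(\lambda\uplus_c\mu)]_+\le D_i(\lambda)+D_i(\mu)$, and in the mixed-sign and nonpositive cases you must compute $D_i(\lambda\uplus_c\mu)$ exactly; neither is established. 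As it stands, the proposal is a plausible research plan containing one false key lemma and two unproven steps, not a proof; the paper instead obtains the needed control through Lemmas \ref{Lemma_CombFact_QuadCrossing} and \ref{lemma:weighteddegree} in the polygon models, plus the computation of \S\ref{sec:exceptional}.
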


To prove Proposition \ref{prop:equality}, we will first make two reductions. We first note that, without loss of generality, we may assume that $\mcA$ has no frozen variables. Indeed, since compatibility degree with any frozen variable is zero, the entries of $\omega_v$ indexed by frozen variables are zero, and so the frozen variables do not contribute to $\omega_v\cdot \deg(y_i)$.

We secondly observe that it suffices to prove Proposition \ref{prop:equality} in the case that $\mcA$ has an indecomposable exchange matrix $B$. Indeed, after simultaneously permuting rows and columns of $B$, we may assume that $B$ is block diagonal with indecomposable blocks. This block structure is preserved under mutation, and exchange relations only involve cluster variables corresponding to a single block. Hence we may consider each indecomposable block of $B$ separately. 

Since an exchange matrix $B$ being indecomposable implies that its Cartan counterpart is indecomposable, we have thus reduced to considering cluster algebras with no frozen coefficients and indecomposable Cartan matrices of finite type. These are classified exactly by Dynkin diagrams. Our proof now proceeds by an analysis of each type. For the classical types $A_n$, $B_n$, $C_n$, and $D_n$, we give a combinatorial argument in \S\ref{sec:classical}. For the exceptional types $E_6$, $E_7$, $E_8$, $F_4$, $G_2$, we verify the proposition computationally in \S\ref{sec:exceptional}.

Before diving further into the proof of Proposition \ref{prop:equality}, we show that the proposition almost immediately implies Theorem \ref{thm:compdegree}.

\begin{proof}[Proof of Theorem \ref{thm:compdegree}]
Consider any primitive exchange relation $P$ in $\mcA$ of the form $xx'=y_1+y_2$ with $y_1$ a primitive term. 
By Lemma \ref{Lemma_Of_The_Whole_Thesis}, we need to show that 
\[
\omega_v\cdot \deg(x\cdot x')\geq \omega_v\cdot \deg(y_1).
\]
If $v\neq x,x'$, then the desired inequality follows from Proposition \ref{prop:equality}.
If instead $v=x$, then $v$ is compatible with all the variables in $y_1$, hence $\omega_v\cdot \deg(y_1)=0$; the same is true if $v=x'$. Since compatibility degree is always non-negative, we always have $\omega_v\cdot \deg(x\cdot x')\geq 0$ and the desired inequality again follows. 
\end{proof}

	\subsection{Classical Types}\label{sec:classical}
In this section, we prove Proposition \ref{prop:equality} for the classical types $A_n$, $B_n$, $C_n$, or $D_n$ with no frozen variables. For type $A_1$ with no frozen variables there is nothing to prove, since there is only a single exchange relation of the form $x\cdot x'=1+1$. We will henceforth assume that $n\geq 2$, and $n\geq 4$ in type $D_n$.

For a convex quadrilateral $\bQ$ in the plane, denote its crossing diagonals by $\bQ(0)$, and its two pairs of opposite edges by $\bQ(1)$ and $\bQ(2)$ respectively.
Denote the number of crosses between a line segment $l$ and a set of line segments $S$ by $l\cdot S$. 
Two coincident segments do not count as a cross.
We first prove the following combinatorial fact.

\begin{lemma}\label{Lemma_CombFact_QuadCrossing}
	Let $\bQ$ be a convex quadrilateral. 
	Then for any line $l$ that does not contain a segment of $\bQ(0)$,
	\begin{equation*}
		l\cdot \bQ(0)=\max_{i=1,2}\{l\cdot \bQ(i)\}.
	\end{equation*}
	Moreover, if for some $i>0$ we have $l\cdot \bQ(i)>0$, then
	\[
		l\cdot \bQ(0)=l\cdot \bQ(i).
	\]
\end{lemma}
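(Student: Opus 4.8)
The plan is to reduce everything to the \emph{sign pattern} of the four vertices of $\bQ$ relative to $l$, after which a short case analysis -- controlled entirely by convexity -- finishes the argument. First I would fix notation: label the vertices of $\bQ$ cyclically as $P_1,P_2,P_3,P_4$, so that $\bQ(0)=\{P_1P_3,\,P_2P_4\}$ while the pairs of opposite edges are $\bQ(1)=\{P_1P_2,\,P_3P_4\}$ and $\bQ(2)=\{P_2P_3,\,P_4P_1\}$. Assign to each vertex a sign $\sigma_i\in\{+,-,0\}$ recording the open half-plane of $l$ containing $P_i$, with $0$ meaning $P_i\in l$. The decisive reduction is that $l$ crosses the interior of a segment $P_iP_j$ exactly when $\{\sigma_i,\sigma_j\}=\{+,-\}$: an endpoint lying on $l$ gives only a boundary intersection, and two endpoints on $l$ make the segment coincident with $l$, which is not counted. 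Thus each of $l\cdot\bQ(0),\,l\cdot\bQ(1),\,l\cdot\bQ(2)$ depends only on $(\sigma_1,\sigma_2,\sigma_3,\sigma_4)$, and the hypothesis that $l$ contains no segment of $\bQ(0)$ says precisely that neither $\sigma_1=\sigma_3=0$ nor $\sigma_2=\sigma_4=0$ occurs.

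The core case is the generic one, $\sigma_i\in\{+,-\}$ for all $i$. Here convexity does the essential work: since the diagonals $P_1P_3$ and $P_2P_4$ meet at an interior point of $\bQ$, no line can place $\{P_1,P_3\}$ strictly on one side and $\{P_2,P_4\}$ strictly on the other, so the alternating pattern $(+,-,+,-)$ cannot occur and the $+$-vertices and $-$-vertices each form a contiguous cyclic arc. Up to rotation and a global sign flip this leaves only $(+,+,+,+)$, $(+,+,+,-)$, and $(+,+,-,-)$, for which a direct count gives $l\cdot\bQ(0)=0,1,2$ with opposite-edge counts $\{0,0\}$, $\{1,1\}$, and $\{2,0\}$ respectively. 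In each case $l\cdot\bQ(0)=\max_{i}l\cdot\bQ(i)$, and whenever some $l\cdot\bQ(i)$ is positive it already equals $l\cdot\bQ(0)$, so the ``moreover'' clause falls out of the same computation.

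The step demanding the most care -- and the main obstacle -- is the degenerate configurations where $l$ meets a vertex of $\bQ$, which genuinely arise in the intended application, where $l$ and the edges of $\bQ$ are diagonals of $\bP_N$ sharing a polygon vertex. If exactly one sign vanishes, a line through that vertex meets the remaining convex boundary in at most one further point and hence crosses at most one of the two non-incident edges; enumerating the admissible patterns $(0,+,+,+)$, $(0,+,-,-)$, $(0,+,+,-)$ (up to symmetry) and recounting confirms the identity. If two signs vanish they must belong to adjacent vertices -- opposite vanishing signs are excluded by hypothesis, and three collinear vertices are impossible in a strictly convex quadrilateral -- so $l$ supports an edge and the other two vertices lie on a single side, giving $(0,0,+,+)$ with all three counts zero. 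Since these exhaust the admissible sign vectors, both assertions of the lemma follow. The real difficulty is thus not any individual count but certifying that the case list is complete while consistently applying the convention that boundary and coincident intersections are not crossings.
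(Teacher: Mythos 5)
Your proof is correct and follows essentially the same route as the paper, which likewise argues by an exhaustive case analysis of the position of $l$ relative to $\bQ$ (the paper lists three configurations ``without loss of generality'' and concludes by inspecting figures). Your sign-vector bookkeeping simply makes that WLOG rigorous: the convexity argument excluding the alternating pattern $(+,-,+,-)$ (and its degenerate analogue $(0,+,-,+)$) is exactly what certifies the paper's case list is complete, and the crossing counts in each admissible pattern match the paper's inspection.
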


\begin{proof}
	If $l$ does not cross any line of $\bQ$, both claims are trivially true.
	Otherwise, without loss of generality, there are three possible positions of $l$ relative to $\bQ$ as shown in Figure \ref{Figure_Lemma_CombFact_QuadCrossing}.
	By inspection, the claims follow in each case.
	\begin{figure}
		\centering
		{\includegraphics[height=4cm]{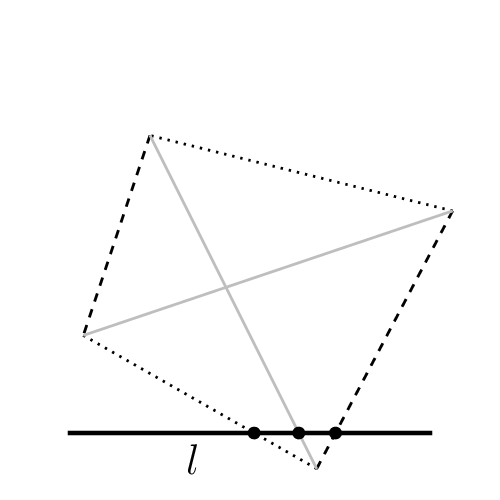}}\hspace{.2cm}
		{\includegraphics[height=4cm]{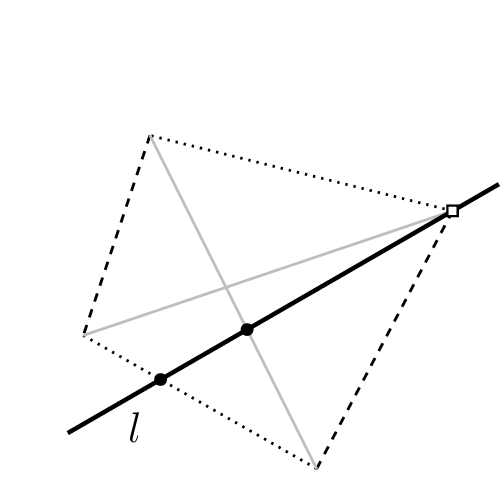}}\hspace{.2cm}
		{\includegraphics[height=4cm]{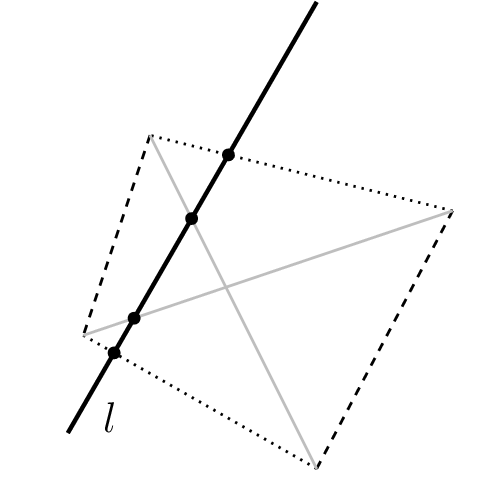}}
		\caption{Possible positions of $l$ relative to $\bQ$\label{Figure_Lemma_CombFact_QuadCrossing}}
	\end{figure}
\end{proof}

For an exchange relation $xx'=y_1+y_2$ with exchange quadrilateral(s) $\bQ$ or $\{\bQ,\overline{\bQ}\}$, we denote $y_0=x x'$ so that $\bQ(i)$ or $\{\bQ(i),\overline{\bQ(i)}\}$ is the set of diagonals that correspond to $y_i$.
We use the following lemma which relates the combinatorial fact above to the computation of the dot product $\omega_v\cdot \deg(y_i)$.
\begin{lemma}\label{lemma:weighteddegree}
	Let $\mcA$ be a cluster algebra with no frozen variables of type $A_n$, $B_n$,  or $C_n$ with $n\geq 2$, or type $D_n$ with $n\geq 4$.
	Let $P$ be an exchange relation
	\[
	x x' = y_1 + y_2
	\] 
	with corresponding exchange quadrilateral(s) $T=\{\bQ\}$ or $T=\{\bQ,\overline{\bQ}\}$, and denote $y_0 = xx'$. 
	Let $v\in\mcV$ be a cluster variable different from $x$ and $x'$ with $v$ corresponding to diagonal(s) $L=\{l\}$ or $L=\{l,\overline{l}\}$.
	Then, if $P$ is of exchange type 0, 1, 2, or 3, there exists a natural number $\epsilon$ such that 
	\begin{align}
		\omega_v \cdot \deg(y_i) & = \frac{1}{\epsilon}\sum_{\bR\in T,\ k\in L}k\cdot \bR(i), \qquad i=0,1,2;\label{eqn:newx}
	\end{align}
	If $\mcA$ is of type $D_n$ and $P$ is of exchange type 4, either \eqref{eqn:newx} holds, or $l$ is a diameter and
	\begin{equation}\label{eqn:weighted_deg_equals_crossing4}
		\omega_v \cdot \deg(y_i) = l\cdot \bQ(i) + D(l,i),\qquad i=0,1,2
	\end{equation}
	where $D(l,i)$ denotes the number of diameters in $y_i$ that cross $l$ and have a different color from $l$.
\end{lemma}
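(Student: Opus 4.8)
The plan is to compute $\omega_v\cdot\deg(y_i)$ directly in the combinatorial model and match it term by term against the crossing numbers on the right-hand side. Expanding the monomial $y_i=\prod_y y^{m_y}$ in the extended cluster variables gives $\omega_v\cdot\deg(y_i)=\sum_y m_y\,(v\|y)$, so it suffices to understand the individual compatibility degrees $(v\|y)$ for the variables $y$ occurring in $y_i$. By \S\ref{sec:quadrilateral} these are precisely the variables attached to the edges $\bR(i)$, $\bR\in T$, of the exchange quadrilateral(s), and by Propositions \ref{PropModelAn}, \ref{PropModelBCn}, and \ref{PropModelDn} each $(v\|y)$ is a (suitably normalized) count of crossings between the diagonal(s) $L$ of $v$ and the diagonal(s) attached to $y$, where crossings of colored diameters follow the convention of \S\ref{sec:comb}. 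I would then argue by a case analysis on the exchange type.

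The one genuinely geometric input is the central symmetry of $\bP_N$: the $180^\circ$ rotation preserves crossings, so $\overline l\cdot s=l\cdot\overline s$ for all segments, with $\overline l=l$ exactly when $l$ is a diameter. In type $A_n$ (exchange type $0$) there is a single quadrilateral and $(v\|y)$ is literally a crossing number, so \eqref{eqn:newx} holds with $\epsilon=1$. For the centrally symmetric types I would match the multiplicities $m_y$ of the relations \eqref{Equation_Model_Bn_1}--\eqref{Equation_Model_Bn_3}, \eqref{Equation_Model_Dn_1}--\eqref{Equation_Model_Dn_3} (the exponents $r$ and $2/r$) against the fourfold sum over $\{l,\overline l\}\times\{\bQ,\overline\bQ\}$, collapsing the latter via $\overline l\cdot s=l\cdot\overline s$ onto the two-term weighted degree. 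This forces a single integer $\epsilon$ valid for all of $i=0,1,2$ at once (generically $\epsilon=2$, but for instance $\epsilon=1$ for type $3$ exchanges in type $C_n$). In type $D_n$ the half-crossing normalization is exactly what makes the plain-diagonal contributions agree numerically with the $B_n$ computation; and whenever a diameter enters a type $1$, $2$, or $3$ relation it does so as a full colored diameter, so the colored-crossing count of $L$ against $\bR(i)$ records $(v\|y)$ correctly---here using the arithmetic fact that a diameter with distinct endpoints crosses exactly one color of any other diameter, so when both colors occur their contributions sum to the uncolored crossing.

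The main obstacle is the type $4$ exchange in type $D_n$, which produces the alternative formula \eqref{eqn:weighted_deg_equals_crossing4}. Here each $y_i$ contains exactly one single colored diameter, and in the exchange quadrilateral this diameter appears not as a full segment but as a radius. When $v$ is a plain pair $L=\{l,\overline l\}$ this is harmless: a plain diagonal crosses a full diameter in one interior point, lying on exactly one of its two radii, so the crossing of $l$ with a diameter of $y_i$ is recovered as a crossing with a radius edge of $\bQ$ together with one of $\overline\bQ$, and central symmetry again yields \eqref{eqn:newx} with $\epsilon=2$. When $v$ is a diameter $l$, two features break this: a diameter meets a radius only at the center, which is not an interior point, so $l$ registers no crossing with any radius edge and the diameter variables contribute nothing to $l\cdot\bQ(i)$; and the color of a diameter is lost once it is cut into a radius, so the colored-crossing count can no longer be recovered from $l\cdot\bQ(i)$. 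I would therefore split $\omega_v\cdot\deg(y_i)$ into its plain part, equal to $l\cdot\bQ(i)$ (the half-normalization becoming a full crossing since $l=\overline l$), and its diameter part, which is by definition the number $D(l,i)$ of diameters of $y_i$ that cross $l$ in a different color; as each $y_i$ of a type $4$ relation has exactly one plain diagonal and one diameter, this yields \eqref{eqn:weighted_deg_equals_crossing4} exactly.
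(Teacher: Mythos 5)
Your proposal is correct and follows essentially the same route as the paper's proof: a case analysis by type and exchange type that converts $\omega_v\cdot\deg(y_i)=\sum_y m_y(v\|y)$ into crossing counts via the combinatorial models, collapses the sum over $\{l,\overline l\}\times\{\bQ,\overline\bQ\}$ using central symmetry, and isolates the type $4$ diameter case where the radius/center subtlety and the loss of color force the separate formula \eqref{eqn:weighted_deg_equals_crossing4}. The specific values of $\epsilon$ you identify (including $\epsilon=1$ for type $3$ exchanges in type $C_n$) and your splitting of the type $4$ computation into a plain part $l\cdot\bQ(i)$ and a colored-diameter part $D(l,i)$ match the paper's argument.
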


\begin{proof}
	We split up the cases by considering the type of $\mcA$.
	Denote the compatibility degree in a cluster algebra of type $\Gamma$ by $(\cdot || \cdot)_{\mathrm{\Gamma}}$.
	
	\subsubsection*{Type $A_n$}
	Suppose $\mathcal{A}$ is of type $A_n$. 
	By Proposition \ref{PropModelAn}, the compatibility degree is given by
	\[
	(x_a||x_b)_{A_n}=(x_b||x_a)_{A_n}=
	\begin{cases}
		1 & \text{if the diagonals } a \text{ and  } b \text{ cross,}\\
		0 & \text{otherwise.}
	\end{cases}
	\]
	Moreover, in every exchange relation, the cluster variables appearing have degree 1. 
	Since in type $A_n$ all exchange quadrilaterals are of type 0, the degree of a variable in $P$ equals the number of lines corresponding to that variable in $\bQ$. 
	Furthermore, $v$ corresponds to a single diagonal $l$, so $L=\{l\}$, and $T=\{\bQ\}$.
	This implies that \eqref{eqn:newx} holds for type $A_n$ with $\epsilon=1$.
	
	\subsubsection*{Types $B_n$ and $C_n$}
	Suppose $\mathcal{A}$ is of type $B_n$ or type $C_n$.
	Set $r=1$ for type $B_n$ and $r=2$ for type $C_n$.
	Recall from Proposition \ref{PropModelBCn} that, if $A = \{a,\overline{a}\}$ and $B = \{b,\overline{b}\}$ are sets of diagonals, the compatibility degrees are given by
	\begin{align*}
		(x_a||x_b)_{B_n} &=\text{number of crosses between $a$ and $B$},\\
		(x_a||x_b)_{C_n} &=\text{number of crosses between $A$ and $b$}.
	\end{align*}
	There are three types of exchange quadrilaterals in $\mathcal{A}$, namely, type 1 (corresponding to Equation \eqref{Equation_Model_Bn_1}), type 2 (corresponding to Equation \eqref{Equation_Model_Bn_2}), and type 3 (corresponding to Equation \eqref{Equation_Model_Bn_3}).
	We will show \eqref{eqn:newx} holds for some $\epsilon$ in each type.
	
	\begin{figure}
		\centering
		\subcaptionbox{Type $B_n$\label{fig:Pf_Bn_Type_2_Exchange}}{\includegraphics[height=4cm]{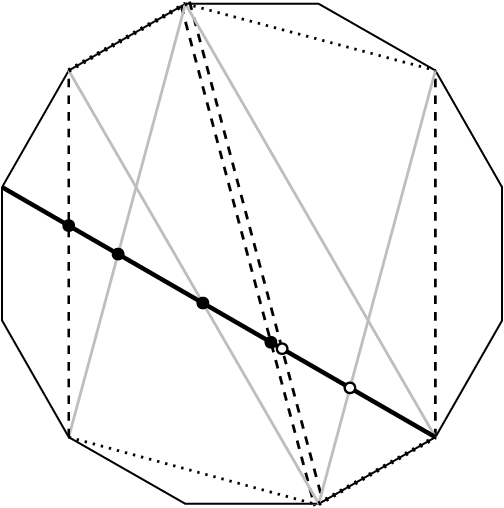}}\hspace{1cm}
\subcaptionbox{Type $C_n$\label{fig:Pf_Cn_Type_2_Exchange}}{\includegraphics[height=4cm]{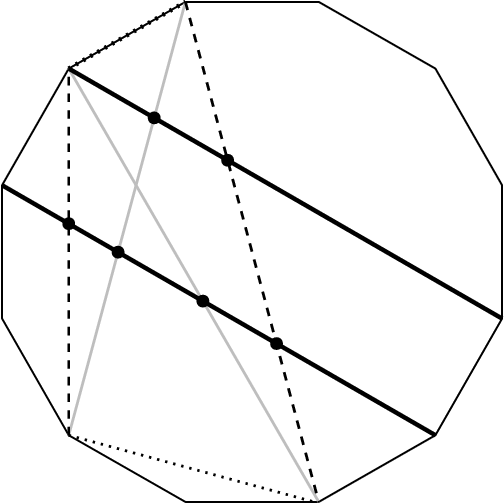}}
\caption{Single and double lines in type 2 exchanges}
	\end{figure}

	\subsubsection*{Type $B_n$}
	For type $B_n$, by the definition of compatibility degree, $\omega_v \cdot \deg(y_i)$ equals the number of crosses between $l$ and the diagonals in $\bQ(i), \overline{\bQ(i)}$, where each cross is weighted by the exponent of the corresponding variable in $P$.
	In type 1 and type 3 exchanges, all exponents are $1$ (or $0$), so 
	\[
		\omega_v \cdot \deg(y_i)=\sum_{\bR\in T} l\cdot \bR(i).
	\]
	This means we can take $\epsilon=\#L$ to obtain \eqref{eqn:newx}.
	
	In type 2 exchanges, the overlapping of $\mathbf{Q}$ and $\overline{\mathbf{Q}}$ at a diameter is accounted for by the exponent $2$ of the corresponding variable in $P$, so we treat this diameter as a double line. See Figure \ref{fig:Pf_Bn_Type_2_Exchange}.	
	Again, \eqref{eqn:newx} holds with $\epsilon=\#L$.

	\subsubsection*{Type $C_n$}
	For type $C_n$, by the definition of compatibility degree, 
	$\omega_v \cdot \deg(y_i)$ equals the number of crosses between $L$ and one set of diagonal representatives of $y_i$ in $\bQ(i)$ and $\overline{\bQ(i)}$, and each cross is weighted by the degree of the corresponding variable in $P$.
	Therefore, in type 1 exchanges, we immediately see that 
	\eqref{eqn:newx} holds with $\epsilon=2$.
	
	For type 2 exchanges, the variable corresponding to the diameter has exponent $1$ in $P$.
	As opposed to what we see in type $B_n$, there is a single line instead of a double line where $\mathbf{Q}$ and $\overline{\mathbf{Q}}$ intersect.
	Therefore, we only need to count crosses between $L$ and one of $\{\mathbf{Q}, \overline{\mathbf{Q}}\}$, and hence \eqref{eqn:newx} still holds with $\epsilon=2$.
	 
	For type 3 exchanges, $\bQ = \overline{\bQ}$, and so the $i=0$ case of \eqref{eqn:newx} holds with $\epsilon=1$. 
	Both variables $y_1$ and $y_2$ have exponent $2$ in $P$, and they both correspond to non-diameters.
	We claim that computing the compatibility degree with $y_1$ or $y_2$ is equivalent to counting the number of crosses of $L$ with both diagonals of the pair corresponding to $y_1$ or $y_2$. 
		Indeed, if $l$ crosses both diagonals in $\bQ(i)$, the claim is immediate.
	Otherwise, if $l$ only crosses one of the diagonals in $\bQ(i)$ but not the other, $\overline{l}$ must cross the other diagonal in $\bQ(i)$.
	This is because the set $L$ of diagonals is invariant under the $180^\circ$ rotation, but the diagonals in $\bQ(i)$ are interchanged under this rotation. See Figure \ref{fig:Pf_Cn_Type_3_Exchange}.
	Therefore, the weighted count of crossings of $L$ with one set of diagonal representative of $\bQ(i)$ is equal to the number of crossings between $L$ and $\bQ(i)$.
It follows that \eqref{eqn:newx} still holds with $\epsilon=1$.
	
	\begin{figure}
		\centering
		\subcaptionbox{Weighted count of crossings.}
		{\includegraphics[height=4cm]{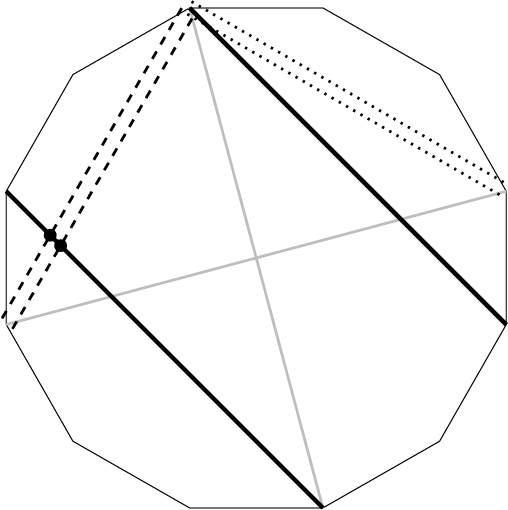}}\hspace{1cm}
		\subcaptionbox{Crosses between $L$ and $\bQ(i)$. } 
		{\includegraphics[height=4cm]{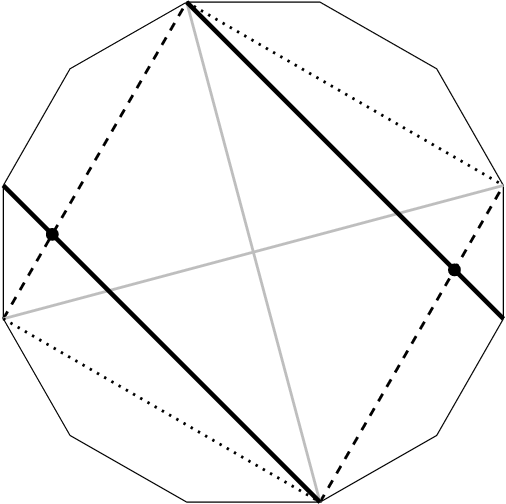}}
		\caption{Equivalence of cross counting in type 3 for $C_n$\label{fig:Pf_Cn_Type_3_Exchange}}
	\end{figure}	
	
	\subsubsection*{Type $D_n$}
	Suppose $\mathcal{A}$ is of type $D_n$. 
	Recall from Proposition \ref{PropModelDn} that if $A = \{a,\overline{a}\}$ and $B = \{b,\overline{b}\}$ are sets of diagonals, the compatibility degrees are given by
	\[
	(x_a||x_b)_{D_n} =
	\begin{cases}
		\text{number of crosses between $a$ and $b$} & \text{if } a \text{ and } b \text{ are diameters,}\\
		\dfrac{1}{2}(\text{number of crosses between $A$ and $B$}) & \text{otherwise,}\\
	\end{cases}
	\]
	with the convention that diameters of the same color do not cross.

	Suppose first that $\#L=2$ and we have an exchange relation of type 1, 2, or 3. Then the analysis from type $B_{n-1}$ applies and we have \eqref{eqn:newx} holding with $\epsilon=2$. Notice that in type 3, the power of $2$ in type $B_{n-1}$ for the cluster variable associated to the common diameter of $\bQ$ and $\overline{\bQ}$ is replaced by two cluster variables corresponding to the two different colors of that diameter.
	
	\begin{figure}
		\centering
				\subcaptionbox{$|L|=2$\label{fig:Pf_Dn_Type_4_Exchange1}}
				{\includegraphics[height=4cm]{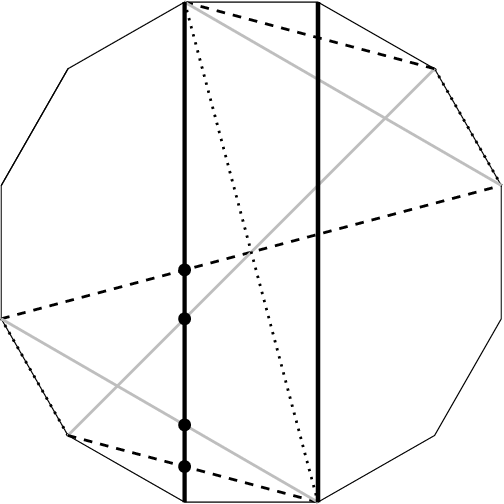}}\hspace{1cm}
			\subcaptionbox{$|L|=1$\label{fig:Pf_Dn_Type_4_Exchange2}}{\includegraphics[height=4cm]{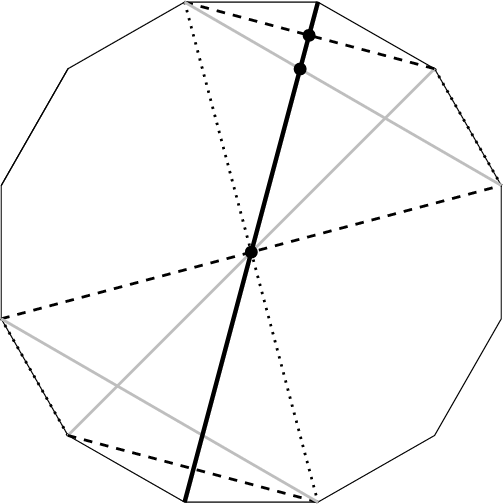}}
			\caption{Crossing in type 4 for $D_n$}
	\end{figure}
	If $\#L=2$ and $P$ is a type 4 exchange, note that if $l$ crosses a line $k$ in $\bQ$, it does not cross $k$ again in $\overline{\bQ}$, since  $l$ and $k$ cross at most once.
	Hence,
	\begin{align*}
		\omega_v\cdot \deg(y_i) & = (v||y_i)_{D_n}\\
		& = \dfrac{1}{2}\big(\text{number of crosses between $L$ and $\{\bQ(i),\overline{\bQ(i)}\}$}\big)
	\end{align*}
	and so \eqref{eqn:newx} holds with $\epsilon=2$.
	See Figure \ref{fig:Pf_Dn_Type_4_Exchange1}.

We now suppose $\#L=1$, that is, $l$ is a diameter, which we may assume without loss of generality to be blue. 
	Note that since $l$ is blue, it does not cross blue diameters.
	If $P$ is of type 1 or type 2, the analysis from type $C_{n-1}$ applies and we have \eqref{eqn:newx} holding with $\epsilon=2$.

	Suppose $P$ is a type 3 exchange.
Since the diagonals of $\bQ(0)$ are diameters of different colors, 
\[
		\omega_v \cdot \deg(y_0)  = \dfrac{1}{2}(l\cdot \bQ(0)).
	\]
But for $i>0$ we also have
\[
		\omega_v \cdot \deg(y_i)  = \dfrac{1}{2}(l\cdot \bQ(i))
	\]
	so \eqref{eqn:newx} holds with $\epsilon=2$.

Finally, suppose $P$ is a type 4 exchange. Then $l$ crosses the diameters appearing in $\bQ$ and $\overline{\bQ}$ at the center of $\bP_N$, but since the center is a vertex of both $\bQ$ and $\overline{\bQ}$, this cross is not counted in the expressions $l\cdot\bQ(i)$ and $l\cdot \overline{\bQ(i)}$, but the crosses between $l$ and the non-diameters are counted.
	Moreover, since $l$ is a diameter, if $l$ crosses a non-diameter in $\bQ$, it also crosses the counterpart of that diagonal in $\overline{\bQ}$.
	Since the compatibility degree is given by half the number of crosses when a diameter crosses a diagonal pair, counting two crosses from each of $\bQ$ and $\overline{\bQ}$ is the same as counting only the cross in $\bQ$.
	See Figure \ref{fig:Pf_Dn_Type_4_Exchange2}.
	Hence, 
	\[
	\omega_v\cdot \deg(y_i) = l\cdot \bQ(i) + D(l,i),
	\]
	which is precisely Equation \eqref{eqn:weighted_deg_equals_crossing4}.

\end{proof}

We now prove Proposition \ref{prop:equality} for the classical types:
\begin{proof}[Proof of Proposition \ref{prop:equality} in types $A_n,B_n,C_n,D_n$]
	We use notation as in the statement of Lemma \ref{lemma:weighteddegree}.
	Suppose first that $\#L=1$. 
	Then for exchanges of types 0, 1, 2, and 3 the claim of the proposition then follows from Lemma \ref{Lemma_CombFact_QuadCrossing} and Lemma \ref{lemma:weighteddegree}. Indeed, if $\#T=1$ this is immediate. If instead $\#T=2$, we use that  $l\cdot \bQ(i)=l\cdot \overline \bQ(i)$ since $l=\overline l$.
	For exchanges of type 4, we note that $l$ must be a diameter, and then $D(l,0)=D(l,1)=D(l,2)$; the claim of the proposition again follows.

	Suppose instead that $\#L=2$. If $\#T=1$ (i.e. we have a type 3 exchange) then $l\cdot \bQ(i)=\overline l\cdot \bQ(i)$ since $\bQ=\overline \bQ$, so the claim of the proposition follows from Lemma \ref{Lemma_CombFact_QuadCrossing} and Lemma \ref{lemma:weighteddegree}.

	It remains to consider the case that $\#L=\#T=2$.
	If $l$ intersects $\bQ$ but not $\overline \bQ$, then for all $i$, $l\cdot \bQ(i)=\overline l\cdot \overline \bQ(i)$ and $\overline l\cdot \bQ(i)=l \cdot \overline \bQ(i)=0$, and again the claim of the proposition follows from Lemmas
\ref{Lemma_CombFact_QuadCrossing} and \ref{lemma:weighteddegree}.
Suppose instead that $l$ intersects both $\bQ$ and $\overline \bQ$. If we have an exchange relation of type 1 or 2, then $l$ must intersect the longest edges of $\bQ$ and $\overline \bQ$, and the claim follows similar to above. See Figure \ref{fig:Pf_Type1or2_intersecting_both_quads}.
If we have an exchange relation of type 4, then for either $i=1$ or $i=2$, $l$ must intersect both $\bQ(i)$ and $\overline\bQ(i)$; see Figure \ref{fig:Pf_Type4_intersecting_both_quads}.
The claim now follows with a similar argument to above.

\begin{figure}
	\centering
	\includegraphics[height=4cm]{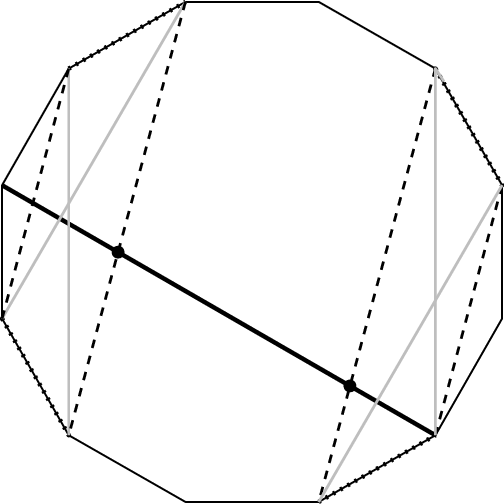}
	\caption{$l$ intersecting both $\bQ$ and $\overline{\bQ}$ \label{fig:Pf_Type1or2_intersecting_both_quads}}
\end{figure}

\begin{figure}
	\centering
	\includegraphics[height=4cm]{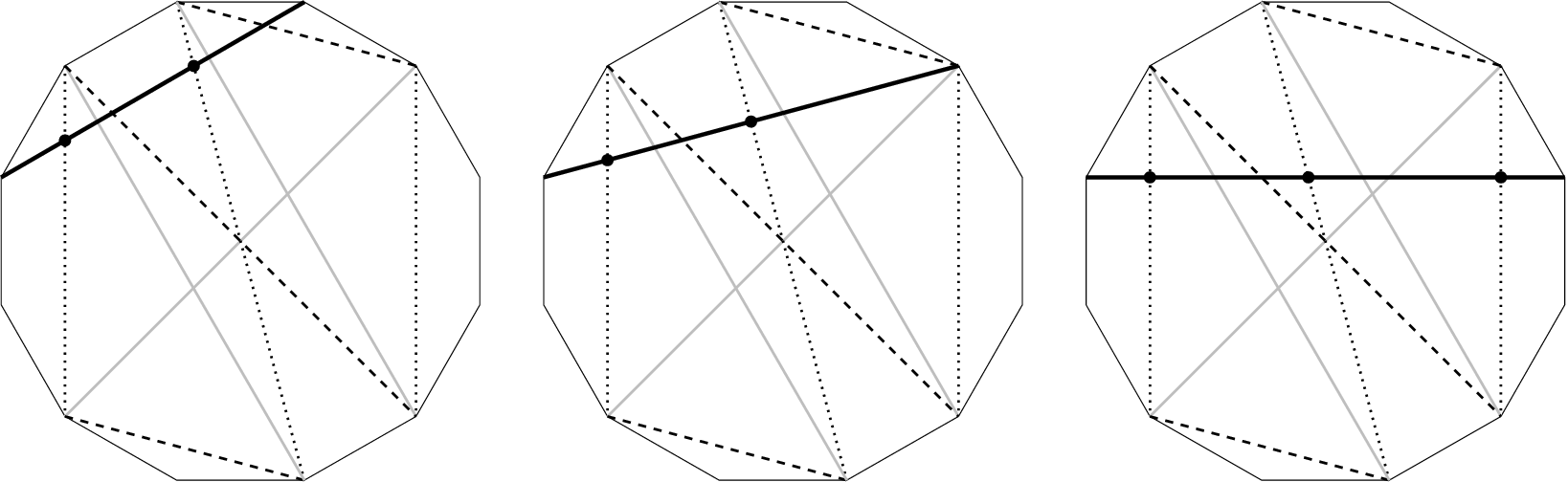}
	\caption{$l$ intersecting both $\bQ$ and $\overline{\bQ}$ in a type 4 exchange \label{fig:Pf_Type4_intersecting_both_quads}}
\end{figure}

\end{proof}

\subsection{Exceptional Types}\label{sec:exceptional}
In this section, we discuss the computation verifying Proposition \ref{prop:equality} for the exceptional types $E_6$, $E_7$, $E_8$, $F_4$, and $G_2$. For the computation, we use SageMath \cite{sagemath}, including built-in functions for cluster algebras, Weyl groups, undirected graphs, and linear algebra. Accompanying computer code is available at \cite{ancillary}.

In order to verify Proposition \ref{prop:equality}, we need to understand compatibility degrees and exchange relations in the exceptional types. For compatibility degrees, we use Theorem \ref{Theorem_c-compdeg_def} and the discussion immediately thereafter to compute compatibility degrees in terms of $\bg$-vectors. Note that in creating the initial exchange matrix $B(c)$, we use the Coxeter element $c=s_1\cdots s_n$, using SageMath's ordering of the simple reflections in the Weyl group of the given type. Using the iterative process of Theorem \ref{Theorem_c-compdeg_def}, we obtain a matrix $C$ with rows and columns indexed by cluster variables (represented as elements of $\Pi(c)$) whose entries are the respective compatibility degrees of the pair.
The vectors $\omega_v$ from Theorem \ref{thm:compdegree} are exactly the columns of $C$.

From the matrix $C$, we are able to see which pairs of cluster variables are exchangeable. For any exchangeable pair $x,x'$, we then use Theorem \ref{Theorem_GenPrimExchRel} to obtain the corresponding exchange relation $P$.
More specifically, for the pair of weights $\lambda=\psi(x),\mu=\psi(x')\in \Pi(c)$, we obtain the weights of all cluster monomials appearing in $P$ by computing the elements $\{\tau_c^{-m}(\tau_c^{m}(\lambda)+\tau_c^{m}(\mu))\}_{m\in\ZZ}$ until two distinct elements $\lambda+\mu$ and $\lambda\uplus_c\mu$ are returned. 

To determine the actual cluster monomials corresponding to $\lambda+\mu$ and $\lambda\uplus_c\mu$, 
we use the compatibility degree matrix $C$ to find a set $S_{\lambda, \mu}\subseteq \Pi(c)$ such that $S_{\lambda, \mu}$ is a maximal set that is compatible with both $\lambda$ and $\mu$, and that the elements in $S_{\lambda, \mu}$ are pairwise compatible.
 This is done by finding a maximum clique of the graph whose vertices are elements of $\Pi(c)$ compatible with $\lambda$ and $\mu$, and with edges given by pairs of compatible elements.

 By definition of $S_{\lambda,\mu}$, the exchange between $x$ and $x'$ occurs between the two clusters corresponding under $\psi$ to $S_{\lambda,\mu}\cup\{\lambda\}$ and $S_{\lambda,\mu}\cup\{\mu\}$. 
Since $\psi$ is a bijection from cluster monomials to the weight lattice, we may uniquely represent $\lambda+\mu$ and $\lambda\uplus_c\mu$ as non-negative integral linear combinations of the elements of $S_{\lambda,\mu}\cup\{\lambda\}$.
We obtain the cluster variables appearing in the right hand side of $P$ as $\psi^{-1}(S_{\lambda,\mu})$, and the exponents are exactly the integral coefficients in the above linear combinations.

Verifying Proposition \ref{prop:equality} is now simply a matter of testing \eqref{eqn:equality} for every cluster variable $v$ and for every $P$ coming from a pair of exchangeable cluster variables $x,x'$.
\subsection{Derivation Degrees}
Let $\mcA$ be a cluster algebra of finite cluster type with cluster variables $\mcV$ and frozen variables $\mcW$. Let $I_\mcA^\cross$ be the monomial ideal from Definition \ref{defn:Icross}, and $\mcC_\prim$ the cone from Lemma \ref{Lemma_Of_The_Whole_Thesis}.
In \cite[Conjecture 6.2.9]{ilten2021deformation}, Ilten, Nájera Chávez, and Treffinger conjecture that the multidegrees of first order embedded deformations of $\spec (\KK[\bz]/I_\mcA^\cross)$ that become trivial when forgetting the embedding do not interact with the semigroup generated by multidegrees of first order deformations induced by the universal cluster algebra of $\mcA$. We provide a precise statement of this conjecture and show that it follows from our Proposition \ref{prop:equality}.

For a cluster variable $v\in \mathcal{V}$ and a monomial $\bz^\alpha\in \KK[\bz]$, we define 
\[
	\partial(v,\alpha):=\bz^{\alpha}\cdot\frac{\partial}{\partial z_v}\in \Der_\KK(\KK[\bz,\bz])
\]
where $\dfrac{\partial}{\partial z_v}$ is the formal derivative by $z_v$. We also define
\[
\deg(\partial(v,\alpha)):= \deg(\bz^{\alpha})-\deg(z_v)\in \ZZ^{\mathcal{V}\cup \mathcal{W}}.
\]
We say that $\partial(v,\alpha)$ is $I_\mcA^\cross$-non-trivial if its image in $\Hom_{\KK[\bz]}(I_\mcA^\cross,\KK[\bz]/I_\mcA^\cross)$ is non-zero, or
equivalently, if there exists a cluster variable $w$ such that 
\[
z_w\cdot z_v \in I_\mcA^\cross \hspace{0.5cm}\text{ and }\hspace{0.5cm} z_w\cdot \bz^{\alpha} \not\in I_\mcA^\cross.
\]
The following corollary gives some necessary conditions of the $I_\mcA^\cross$-non-trivial property.

\begin{corollary}[{\cite[Conjecture 6.2.9]{ilten2021deformation}}]\label{Corollary_ModCompDeg}
  Let $\mcA$ be a cluster algebra of finite cluster type.
	If $\partial(v,\alpha)$ is $I_\mcA^\cross$-non-trivial, then 
    \[
    -\deg(\partial(v,\alpha))\not\in \mcC_\prim.
    \]
\end{corollary}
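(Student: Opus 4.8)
The plan is to argue by contradiction: assuming $-\deg(\partial(v,\alpha))=e_v-\alpha\in\mcC_\prim$, I would derive an impossibility. First I would record the only consequence of $I_\mcA^\cross$-non-triviality that the argument actually needs, namely that $v$ does not divide $\bz^\alpha$. Fixing a cluster variable $w$ with $z_wz_v\in I_\mcA^\cross$ and $z_w\bz^\alpha\notin I_\mcA^\cross$, the second condition says that all cluster variables dividing $z_w\bz^\alpha$ are pairwise compatible; were $v\in\supp(\alpha)$, then $v$ and $w$ would be compatible, contradicting $z_wz_v\in I_\mcA^\cross$. Hence $\alpha_v=0$.

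The engine of the proof is a clean evaluation of the compatibility weights against the generators of $\mcC_\prim$. I would prove that for every primitive exchange relation $P\colon x_Px'_P=y_1+y_2$ (with $y_1$ the primitive term and $y_2$ purely frozen) and every cluster variable $u\in\mcV$,
\[
\omega_u\cdot d_P=\begin{cases}1&u\in\{x_P,x'_P\},\\0&\text{otherwise.}\end{cases}
\]
For $u\notin\{x_P,x'_P\}$ this is exactly where Proposition \ref{prop:equality} enters: since $y_2$ is a monomial in frozen variables and compatibility degree with frozen variables vanishes, $\omega_u\cdot\deg(y_2)=0$, so Proposition \ref{prop:equality} gives $\omega_u\cdot\deg(x_Px'_P)=\max\{\omega_u\cdot\deg(y_1),0\}=\omega_u\cdot\deg(y_1)$, the last equality because $\omega_u\cdot\deg(y_1)=\sum_y(u||y)(\deg y_1)_y\ge 0$; hence $\omega_u\cdot d_P=0$. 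For $u=x_P$ (resp.\ $x'_P$) a direct computation suffices: $\omega_{x}\cdot\deg(x_Px'_P)=(x||x)+(x||x')=0+1$ because $x,x'$ are exchangeable, while $\omega_x\cdot\deg(y_1)=0$ since every variable occurring in $y_1$ lies in a common cluster with $x$ and is therefore compatible with it.

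With this formula in hand, I would write $e_v-\alpha=\sum_P\lambda_Pd_P$ with $\lambda_P\ge 0$ (possible because finite type makes $\mcC_\prim$ a finitely generated, hence closed, cone) and extract a contradiction from two pairings. Pairing the identity with $\omega_v$ and using the formula yields $\sum_{P:\,v\in\{x_P,x'_P\}}\lambda_P=\omega_v\cdot(e_v-\alpha)=-\sum_y(v||y)\alpha_y\le 0$; being a sum of non-negatives equal to something $\le 0$, it must vanish, so $\lambda_P=0$ for every $P$ exchanging $v$. Now I would read off the $v$-coordinate of the identity: since $\alpha_v=0$ the left side equals $1$, while on the right every contributing $P$ has $v\notin\{x_P,x'_P\}$, so $(d_P)_v=-(\deg y_1)_v\le 0$, forcing the right side to be $\le 0$. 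This contradiction proves $e_v-\alpha\notin\mcC_\prim$, which is the assertion.

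I expect the main obstacle to be the proof of the evaluation formula for $\omega_u\cdot d_P$, and in particular correctly handling its two delicate features: the presence of possibly non-frozen variables in the primitive term $y_1$, all of which must be argued compatible with $x_P$ and $x'_P$, and the degenerate type $A_1$ relation, which has two primitive terms and hence contributes two generators $d_P$; one should verify that the formula, and therefore the whole argument, remains valid there. Once the formula is established, the two remaining pairings are elementary bookkeeping in $\RR^{\mcV\cup\mcW}$.
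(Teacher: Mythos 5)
Your proof is correct, and the two delicate points you flag do hold: non-triviality forces $\alpha_v=0$ (your compatibility argument is a careful justification of what the paper asserts in one line), and your evaluation formula survives the type $A_1$ relation, since each of its two primitive terms is a frozen monomial and so pairs to zero against every $\omega_u$. However, your route is genuinely different from the paper's. The paper does not argue by contradiction and never decomposes $e_v-\alpha$ into generators of $\mcC_\prim$; instead it perturbs the compatibility weight to $\omega_v':=\omega_v-e_v$ and shows directly that $\omega_v'$ lies in the cone dual to $\mcC_\prim$ (for relations not exchanging $v$ this follows from Proposition \ref{prop:equality} together with $e_v\cdot\deg(y_1)\ge 0$; for relations exchanging $v$ both sides of the relevant inequality are computed to be $0$), and then observes that $\omega_v'\cdot\deg(\partial(v,\alpha))\ge(-1)(-1)=1>0$, because the only negative entry of either vector is the $v$-entry, equal to $-1$ in both since $\alpha_v=0$. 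Non-membership of $-\deg(\partial(v,\alpha))$ in $\mcC_\prim$ is then immediate from duality. In effect, the paper fuses your two pairings --- against $\omega_v$ and against the $v$-coordinate functional --- into a single dual vector, which lets it get by with inequalities only, whereas your argument needs the sharper statement $\omega_u\cdot d_P\in\{0,1\}$ in order to kill the coefficients $\lambda_P$ of relations exchanging $v$ before reading off the $v$-coordinate. That sharper formula is true and is in fact proved in the paper, but only later and for a different purpose (the description of ray generators in the proof of Theorem \ref{Therorem_Result2_Main}), so your proof is a valid alternative that trades the paper's one-step duality argument for a slightly longer computation which, as a by-product, re-derives that 0/1 evaluation.
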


\begin{proof}
	Let $v\in \mcV$ be a cluster variable, and let $\omega_v\in \ZZ^{\mcV\cup \mcW}$ be the vector from Theorem \ref{thm:compdegree}.
We define $\omega_v'=\omega_v-e_v$, that is, we change the $v$-entry of $\omega_v$ from $0$ to $-1$.

We claim that $\omega_v'$ satisfies
\[
\omega_v'\cdot \deg(x\cdot x')\geq \omega_v'\cdot \deg(y_1)
\]
    for any exchange relation $P$ of the form $x\cdot x'=y_1+y_2$.
   
    Indeed, if $v\neq x,x'$, by Proposition \ref{prop:equality} we have
    \begin{align*}
\omega_v'\cdot \deg(x\cdot x')=\omega_v\cdot \deg(x\cdot x')
\geq \omega_v\cdot \deg(y_1)\\=(\omega_v'+e_v)\cdot \deg (y_1)\geq \omega_v'\cdot y_1.
\end{align*}

If instead $v=x$
\[
\omega_v'\cdot \deg(x\cdot x')=-1+(v||x')=-1+1=0
\]
since $x=v$ and $x'$ are exchangeable and hence have compatibility degree $1$.
On the other hand, $v=x$ is compatible with all variables in $y_1$, so 
\[
\omega_v'\cdot \deg(y_1)=0
\]
and the desired inequality follows. A similar argument works for $v=x'$.
By our claim above, we may now conclude that $\omega_v'$ is in the cone dual to $\mcC_\prim$.

    We  now show that if $\partial(v,\alpha)$ is $J$-non-trivial, $\omega_v' \cdot \deg(\partial(v,\alpha))>0$.
    This would imply that $-\deg(\partial(v,\alpha))\not \in \mcC_\prim$, proving the corollary. To that end, note that 
    since $\partial(v,\alpha)$ is $I_\mcA^\cross$-non-trivial, $z_v$ is not a factor of $\bz^\alpha$. 
    In both $\omega_v'$ and $\deg(\partial(v,\alpha))$, the only negative entry is the entry indexed by $v$, and both have $v$-entry $-1$. Therefore 
    \[
    \omega_v' \cdot \deg(\partial(v,\alpha))\geq (-1)(-1) =1 >0
    \]
as desired.
\end{proof}

 \begin{remark}
	 As originally formulated,  \cite[Conjecture 6.2.9]{ilten2021deformation} is slightly weaker than our Corollary \ref{Corollary_ModCompDeg}. Indeed, the original claim is that $-\deg(\partial(v,\alpha))$ does not belong to the \emph{semigroup} whose generators are the elements in Lemma \ref{Lemma_Of_The_Whole_Thesis} that we took to generate $\mcC_\prim$.
\end{remark}

\subsection{Gr\"obner Cone Generators}
In Theorem \ref{thm:compdegree}, we have identified a collection of vectors $\omega_v$ that belong to the Gr\"obner cone $\mcC_\mcA$. We can use these
to produce an element in the \emph{interior} of $\mcC_\mcA$, giving rise to a circular term order. 
\begin{corollary}\label{Corollary_ExplicitCircTermOrder}
    Let $\mcA$ be a cluster algebra of finite cluster type. Then
    \[
    \omega=\sum_{v\in \mcV}[(v||y)]_{y\in \mathcal{V}\cup\mathcal{W}}
    \]
    belongs to the interior of the Gr\"obner cone $\mcC_\mcA$.
    \end{corollary}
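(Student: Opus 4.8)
The plan is to leverage Theorem \ref{thm:compdegree} together with the implicit description of $\mcC_\mcA$ from Lemma \ref{Lemma_Of_The_Whole_Thesis}. Since $\omega=\sum_{v\in\mcV}\omega_v$ with each $\omega_v\in\mcC_\mcA$ by Theorem \ref{thm:compdegree}, and $\mcC_\mcA$ is a cone, we immediately obtain $\omega\in\mcC_\mcA$; the entire content of the corollary is thus the word \emph{interior}. By Lemma \ref{Lemma_Of_The_Whole_Thesis}, $\mcC_\mcA$ is the cone dual to $\mcC_\prim$, i.e.\ it is cut out by the \emph{finitely many} inequalities $\omega'\cdot d_P\geq 0$, where $d_P=\deg(xx')-\deg(y_1)$ ranges over the degrees induced by primitive exchange relations $P\colon xx'=y_1+y_2$ (with $y_1$ a primitive term). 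First I would reduce the claim to a strict-positivity statement: if $\omega\cdot d_P>0$ for every such generator $d_P$, then, since there are only finitely many of them and $\omega'\mapsto \omega'\cdot d_P$ is continuous, the finite intersection of the open sets $\{\omega': \omega'\cdot d_P>0\}$ is an open neighbourhood of $\omega$ contained in $\mcC_\mcA$. This places $\omega$ in the topological interior of $\mcC_\mcA$ and requires no prior knowledge that the cone is full-dimensional.

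It then remains to verify $\omega\cdot d_P>0$ for each primitive $P$. I would expand $\omega\cdot d_P=\sum_{v\in\mcV}\omega_v\cdot d_P$. Every summand is non-negative for the trivial reason that $\omega_v\in\mcC_\mcA=\mcC_\prim^{\vee}$ while $d_P\in\mcC_\prim$. To extract \emph{strict} positivity I would single out the term $v=x$, one of the two exchanged variables. Here $x$ shares a cluster with every variable occurring in $y_1$, so $(x||w)=0$ for each such $w$ and hence $\omega_x\cdot\deg(y_1)=0$; on the other hand $\omega_x\cdot\deg(xx')=(x||x)+(x||x')=0+1=1$, since $x$ is compatible with itself and $x,x'$ are exchangeable. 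Thus $\omega_x\cdot d_P=1$, and as all remaining summands are $\geq 0$ we conclude $\omega\cdot d_P\geq 1>0$. The same computation covers type $A_1$, where $y_1=1$, $\deg(y_1)=0$, and $d_P=\deg(xx')$.

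I do not anticipate a genuine obstacle: once Theorem \ref{thm:compdegree} is available, the corollary is essentially immediate. The only two points needing care are the correct translation of ``interior of $\mcC_\mcA$'' into strict positivity against all generators of $\mcC_\prim$ (which rests on $\mcC_\prim$ being finitely generated, guaranteed by finite cluster type), and the observation that strictness is supplied entirely by the single term $v=x$ while all other terms contribute only non-negatively. Neither of these is delicate, so the proof should be short.
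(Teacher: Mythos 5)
Your proposal is correct and follows essentially the same route as the paper: both reduce via Lemma \ref{Lemma_Of_The_Whole_Thesis} to showing $\omega\cdot d_P>0$ for every degree induced by a primitive exchange relation, use Theorem \ref{thm:compdegree} for the non-negativity of each summand $\omega_v\cdot d_P$, and extract strictness from the exchanged variable(s), whose compatibility degrees give $\omega_x\cdot d_P=1$. The only cosmetic differences are that the paper uses both $v=x$ and $v=x'$ to conclude $\omega\cdot d_P\geq 2$ while you use only $v=x$, and that you spell out the (correct, standard) topological argument that strict positivity against the finitely many generators of $\mcC_\prim$ places $\omega$ in the interior of the dual cone.
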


\begin{proof}[Proof of Corollary \ref{Corollary_ExplicitCircTermOrder}]
    By Lemma \ref{Lemma_Of_The_Whole_Thesis}
we must show that 
    \[
    \omega \cdot  d >0
    \]
    for any degree $d=\deg (x\cdot x')-\deg (y_1)$ induced by a primitive exchange relation $x\cdot x'=y_1+y_2$ with primitive term $y_1$.
   
    For every $v\in \mcV$, 
    \[
    \omega_v \cdot d \geq 0
    \]
    by Theorem \ref{thm:compdegree}.
    Moreover, if $v=x$ or $v=x'$, $\omega_v \cdot d=1$ since $v$ is compatible with one of $x,x'$ and exchangeable with the other, and compatible with all variables in $y_1$.

    We conclude that
    \[
	    \omega \cdot d =\bigg(\sum_{v\in \mcV} \omega_v \bigg)\cdot d \geq 2
    \]
    for any degree $d$ induced by a primitive exchange relation.
\end{proof}

In some situations, we can even use the vectors $\omega_v$ to readily describe the rays of the Gr\"obner cone $\mcC_\mcA$. 
Consider any permutation  $T$ on the cluster variables $\mcV$ such that for every cluster variable $x\in \mcV$, the cluster variables $x,T(x)$ are the exchangeable variables in a primitive exchange relation
\begin{equation}\label{eqn:T}
    x \cdot T(x) = y_x + y_x'
\end{equation}
with primitive term $y_x$
 and every primitive exchange relation arises this way.
 By Theorem \ref{Theorem_GenPrimExchRel}, such a permutation exists: we can take $T=\psi^{-1}\circ \tau_c\circ \psi$.
Note that although $T$ is not in general unique, its orbits in $\mcV$ are independent of the choice of $T$.

\begin{remark}\label{rem:tau}
	 Interpreting cluster variables in types $A_n$, $B_n$, $C_n$, and $D_n$ as (pairs of) diagonals in $\bP_N$, we may take $T$ to be the permutation $[i,j]\mapsto [i+1,j+1]$ (which switches colors of diameters in type $D_n$). Indeed, this follows from the description in \S\ref{sec:quadrilateral} of exchange quadrilaterals corresponding to primitive exchange relations.
 \end{remark}

 \begin{remark}\label{rem:orbits}
For a cluster algebra $\mcA$ of finite cluster type with indecomposable exchange matrix $B$, a permutation $T$ as above has no even-sized orbits in $\mcV$ if and only if $\mcA$ is of type $A_n$, $B_n$, or $C_n$ where $n$ is even, or type $F_4$.
Indeed, for the classical types, this follows from Remark \ref{rem:tau}. For the exceptional types, this may be checked using SageMath \cite{sagemath}, see \cite{ancillary}.
 \end{remark}

\begin{theorem}\label{Therorem_Result2_Main}
    Let $\mcA$ be a cluster algebra with no frozen variables of type $A_n$, $B_n$, $C_n$, or $F_4$, where $n$ is even.
    Let $\mcV$ be the set of its cluster variables, and let $T$ be a permutation of $\mcV$ as above.
Then the Gr\"obner cone $\mcC_\mcA$ is a pointed cone whose rays are generated by 
\begin{equation*}
	\widehat\omega_v=\sum_{i=1}^k (-1)^{i-1} \cdot \omega_{T^i(v)},
\end{equation*}
where $k$ is the size of the $T$-orbit of $v$, and $v$ ranges over all $v\in\mcV$.
\end{theorem}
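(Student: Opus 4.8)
The plan is to show that $\mcC_\prim$ is a full-dimensional \emph{simplicial} cone whose generators are in explicit near-duality with the vectors $\omega_w$, so that the dual cone $\mcC_\mcA$ is automatically pointed and simplicial, and then to identify its dual basis with the $\widehat\omega_v$.

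First I would pin down the exact form of the generators of $\mcC_\prim$. Since $\mcA$ has no frozen variables and is indecomposable and not of type $A_1$, every primitive exchange relation has the form $x\cdot T(x)=y_x+y_x'$, where the primitive term $y_x$ is a product of cluster variables and the other monomial $y_x'$ is a product of frozen variables, hence equal to $1$. Thus $\deg(y_x')=0$, and the degree induced by this relation (Lemma \ref{Lemma_Of_The_Whole_Thesis}) is
\[
d_x:=\deg\bigl(x\cdot T(x)\bigr)-\deg(y_x)=e_x+e_{T(x)}-\deg(y_x).
\]
As $x$ ranges over $\mcV$ these are exactly the generators of $\mcC_\prim$, one for each $x$, and $\mcC_\mcA$ is the cone dual to $\mcC_\prim$. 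The crucial point here is that $y_x$ is the \emph{non}-frozen monomial, so $d_x$ is a genuine difference rather than simply $e_x+e_{T(x)}$.

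The heart of the argument is the computation of the pairing $\omega_w\cdot d_x$. For $w\neq x,T(x)$, I would apply Proposition \ref{prop:equality} to the relation $x\cdot T(x)=y_x+y_x'$: since $\deg(y_x')=0$ the right-hand side equals $\max\{\omega_w\cdot\deg(y_x),0\}=\omega_w\cdot\deg(y_x)$ (compatibility degrees are nonnegative), whence $\omega_w\cdot\deg(x\cdot T(x))=\omega_w\cdot\deg(y_x)$, i.e.\ $\omega_w\cdot d_x=0$. For $w=x$ (and likewise $w=T(x)$) the variables of $y_x$ lie in a common cluster with both $x$ and $T(x)$, hence are compatible with them, giving $\omega_x\cdot\deg(y_x)=0$; combined with $(x\|x)=0$ and $(x\|T(x))=1$ this yields $\omega_x\cdot d_x=\omega_{T(x)}\cdot d_x=1$. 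In summary,
\[
\omega_w\cdot d_x=\begin{cases}1 & w\in\{x,T(x)\},\\[2pt] 0 & \text{otherwise.}\end{cases}
\]
Next I would evaluate $\widehat\omega_v\cdot d_x=\sum_{i=1}^{k}(-1)^{i-1}\,\omega_{T^i(v)}\cdot d_x$ by telescoping. Each term is nonzero exactly when $T^i(v)\in\{x,T(x)\}$, so the sum vanishes if $x$ is not in the $T$-orbit of $v$; if $x=T^a(v)$ with $1\le a\le k$ the only surviving indices are $i=a$ and $i\equiv a+1\pmod k$, and their signs cancel except in the wraparound case $x=v$. Using that $k$ is odd (equivalently, by Remark \ref{rem:orbits}, the restriction to types $A_n,B_n,C_n$ with $n$ even and $F_4$) one finds
\[
\widehat\omega_v\cdot d_x=2\,\delta_{v,x}.
\]

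Finally I would conclude. The relation $\widehat\omega_v\cdot d_x=2\delta_{v,x}$ says that the pairing matrix between $\{\widehat\omega_v\}_{v}$ and $\{d_x\}_{x}$ is $2I$, so both families are bases of $\RR^{\mcV}$; in particular $\{d_x\}$ is linearly independent. Hence $\mcC_\prim$ is a full-dimensional simplicial cone, its dual $\mcC_\mcA$ is pointed, full-dimensional and simplicial, and the rays of $\mcC_\mcA$ are generated by the basis dual to $\{d_x\}$. Since $\widehat\omega_v$ is twice the dual-basis vector to $d_v$ (it pairs to $0$ with every $d_x$, $x\neq v$, and to $2>0$ with $d_v$), the rays of $\mcC_\mcA$ are generated by the $\widehat\omega_v$, as claimed. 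The main obstacle is the first step: recognizing that $y_x$ is the non-frozen term, which is precisely what makes Proposition \ref{prop:equality} collapse $\omega_w\cdot d_x$ to the indicator of $\{x,T(x)\}$; after that, the only delicate point is the sign bookkeeping in the telescoping, where the wraparound contribution survives exactly because each orbit size $k$ is odd.
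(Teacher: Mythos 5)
Your proposal is correct and follows essentially the same route as the paper: both establish that $\omega_w\cdot d_x$ equals $1$ if $w\in\{x,T(x)\}$ and $0$ otherwise (via Proposition \ref{prop:equality} together with compatibility/exchangeability applied to the primitive term $y_x$), then telescope along the $T$-orbit using its odd size to get $\widehat\omega_v\cdot d_x=2\delta_{v,x}$, and conclude by duality through Lemma \ref{Lemma_Of_The_Whole_Thesis}. The only difference is one of explicitness: you spell out the final linear-algebra step (the pairing matrix $2I$ makes $\{d_x\}$ and $\{\widehat\omega_v\}$ dual bases up to scaling, so $\mcC_\prim$ is full-dimensional simplicial and $\mcC_\mcA$ is its pointed simplicial dual), which the paper leaves implicit.
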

\begin{proof}Using the notation of \eqref{eqn:T}, 
set \[
d_x=\deg (x\cdot T(x))-\deg(y_x). 
\]
	We will show that for any cluster variables $v,x\in\mcV$, 
    \begin{equation*}
    \widehat\omega_v \cdot d_x= \begin{cases}
        2 & \text{ if }x=v',\\
        0 & \text{ otherwise.}
    \end{cases}
    \end{equation*}
    It will then follow from Lemma \ref{Lemma_Of_The_Whole_Thesis} that $\mcC_\mcA$ is pointed and has rays that are generated by the $\widehat\omega_v$.

    For any $v,x\in \mcV$ we have
    \[\omega_v \cdot d_x=\begin{cases}
0& v\neq x,T(x)\\
1& v=x\ \textrm{or}\ v=T(x).
	    \end{cases}
\]
Indeed, the first case follows from Proposition \ref{prop:equality}. The second case holds since $v$ is compatible with one of $x,T(x)$ and exchangeable with the other, and compatible with all variables in $y_x$.

Fix now $v\in\mcV$.	By Remark \ref{rem:orbits}, the orbit of $v$ under $T$ has odd size $k$.
For $x=v$, we have 
\[
	\widehat\omega_v\cdot d_x=\omega_{T(v)}\cdot d_x+(-1)^{k-1} \omega_{v}\cdot d_x=1+1=2.
\]
For $x=T^j(v)$ with $1\leq j < k$ we have
\[
	\widehat\omega_v\cdot d_x=(-1)^{j-1}\omega_{T^j(v)}\cdot d_x+(-1)^{j} \omega_{T^{j+1}(v)}\cdot d_x=(-1)^{j-1}+(-1)^{j}=0.
\]
Finally, if $x$ is not in the $T$-orbit of $v$, then clearly $\widehat\omega_v\cdot d_x=0$. 
\end{proof}

\section{Classical Types with Frozen Variables}\label{sec:frozen}
\subsection{Lineality Spaces}\label{sec:linspace}
In this section, we will explicitly describe generators for the Gr\"obner cones $\mcC_\mcA$ when $\mcA$ is one of the cluster algebras with frozen variables described by the combinatorial models in \S\ref{sec:comb}.
We begin in this subsection by describing the lineality spaces of $\mcC_\mcA$.

Let $\mcA$ be the cluster algebra of type $A_n$, $B_n$ or $C_n$ ($n\geq 2$), or $D_n$ ($n \geq 4$) with frozen variables described in \S\ref{sec:comb}. Let $N$ be $n+3$, $2n+2$, $2n+2$, or $2n$, respectively. Recall that the cluster variables $\mcV$ of $\mcA$ are in bijection with diagonals or diagonal pairs of $\bP_N$, where in type $D_n$ diameters are colored blue or red. The frozen variables $\mcW$ of $\mcA$ are in bijection with edges or edge pairs of $\bP_N$.

We now introduce some notation for vectors in $\RR^{\mcV\cup \mcW}$. For a diagonal or edge in type $A_n$ or (colored) diameter $l$ in types $B_n,C_n,D_n$, let $e_l\in \RR^{\mcV\cup \mcW}$ denote the standard basis vector corresponding to $l$. Similarly, for a diagonal or edge pair $L=\{l,\overline l\}$ in types $B_n,C_n,D_n$ let $e_L$ denote the standard basis vector corresponding to $L$. We will occasionally abuse notation and for $L=\{l,\overline l\}$ write $e_L=e_l=e_{\overline l}$.

In type $A_n$, for any vertex $i$ of $\bP_N$ we define
\[
	E_i=\sum_{l\ |\ i\in l} e_l
\]
where the sum is taken over all edges or diagonals $l$ that are incident to the vertex $i$. Similarly, in types $B_n,D_n$ we define
\[
	 E_i=\sum_{l:\ i\in l} e_l+\sum_{L=\{l,\overline l\}:\ i\in l\cup\overline l} e_L
\]
and in type $C_n$
\[
	E_i=e_{[i,\overline i]}+\sum_{l:\ i\in l} e_l+\sum_{L=\{l,\overline l\}:\ i\in l\cup\overline l} e_L
\]
where the sums are taken over all (colored) diameters $l$, or edge or diagonal pairs $L$, that are incident to $i$.

\begin{proposition}\label{Proposition_ABCn_LinSp}
    Let $\mcA$ be the cluster algebra of type $A_n$ ($n\geq 1$), or $B_n$ or $C_n$ ($n\geq 2$) with the frozen variables described in \S\ref{sec:comb}.
    For $\Gamma=A_n$, $B_n$, or $C_n$, define $K=K(\Gamma)$ by 
    \begin{align*}
        K(\Gamma)&=\bigg\{E_i ~\bigg|~i=1,\ldots, n+3\bigg\},\qquad \Gamma=A_n\\
        K(\Gamma)&=\bigg\{E_i ~\bigg|~i=1,\ldots, n+1\bigg\},\qquad \Gamma=B_n,C_n\\
    \end{align*}
    with notation as above.
    Then $K$ is a basis of the lineality space of the Gröbner cone $\mcC_\mcA$ of $\mcA$.
\end{proposition}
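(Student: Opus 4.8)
The plan is to exploit Lemma \ref{Lemma_Of_The_Whole_Thesis}, which presents $\mcC_\mcA$ as the cone dual to $\mcC_\prim$; consequently the lineality space of $\mcC_\mcA$ is exactly the orthogonal complement in $\RR^{\mcV\cup\mcW}$ of the span of the degrees $d_P$ induced by primitive exchange relations. Thus proving the proposition reduces to three claims: (i) each $E_i$ is orthogonal to every $d_P$, so that $\operatorname{span}K\subseteq$ lineality space; (ii) the vectors of $K$ are linearly independent; and (iii) they span the whole lineality space. For (i) I will read off $d_P$ from the combinatorial models via the exchange-quadrilateral dictionary of \S\ref{sec:quadrilateral}: if $P$ has exchange quadrilateral(s) whose crossing diagonals correspond to $x,x'$ and whose non-frozen pair of opposite edges corresponds to the primitive term $y_1$, then $d_P=\deg(xx')-\deg(y_1)$ is the sum of the basis vectors of the two diagonals minus the (weighted) sum of the basis vectors of that edge pair.

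For the orthogonality (i), the decisive point is that $E_i$ is the incidence vector of the vertex $i$: its entry at a diagonal/edge $l$ (resp.\ a pair $L=\{l,\overline l\}$) records whether $i$ lies on $l$ (resp.\ on $l\cup\overline l$), with a diameter through $i$ weighted by $1$ in types $A_n,B_n$ and by $2$ in type $C_n$. The governing geometric fact is that each vertex of a convex exchange quadrilateral is an endpoint of exactly one of its two crossing diagonals and of exactly one edge in each of the two pairs of opposite edges. Hence pairing $E_i$ against $d_P$ contributes $+1$ from the diagonals and $-1$ from the edges of $y_1$, giving $E_i\cdot d_P=0$, while vertices off the quadrilateral contribute nothing. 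I will carry this out over the quadrilateral types $0,1,2,3$ occurring in $A_n,B_n,C_n$ (cf.\ \eqref{Equation_Model_An}, \eqref{Equation_Model_Bn_1}, \eqref{Equation_Model_Bn_2}, \eqref{Equation_Model_Bn_3}), using for the paired types that $E_i$-incidence to a pair $L$ depends only on $i$ (equivalently on $\overline i$), and checking that the diameter exponents $r$ and $2/r$ in the relations are cancelled precisely by the diameter multiplicities built into $E_i$; the type $3$ relations, with both diagonals diameters, are the one place this bookkeeping demands care.

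Linear independence (ii) I expect to obtain from the frozen (edge) entries: if $\sum_i\lambda_iE_i=0$, then comparing the entry of the boundary edge $[i,i+1]$ gives $\lambda_i+\lambda_{i+1}=0$ cyclically, while comparing the entry of any genuine diagonal $[i,j]$ gives $\lambda_i+\lambda_j=0$; running around the polygon and using one same-parity non-adjacent pair forces all $\lambda_i=0$. (For $A_1$ this recovers exactly the four lineality generators listed in Example \ref{ex:A1}.)

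The heart of the argument, and the step I expect to be the main obstacle, is the spanning claim (iii), which I will establish by showing directly that any $\omega$ in the lineality space lies in $\operatorname{span}K$. Viewing $\omega$ as a symmetric function $f$ on pairs of vertices of $\bP_N$, each primitive relation becomes a discrete mixed-difference identity $f(a,c)+f(a+1,c+1)=f(a,c+1)+f(a+1,c)$ together with its diameter analogues in $B_n,C_n$. Writing $\delta(a,k)=f(a,a+k)-f(a,a+k+1)$, the identity reads $\delta(a,k)=\delta(a+1,k-1)$, so $\delta$ is constant along the anti-diagonals $a+k=\text{const}$; this forces the increments of $f(a,\cdot)$ to be independent of $a$, whence $f(i,j)=g(i)+g(j)$ for some $g$ on vertices and therefore $\omega=\sum_i g(i)E_i$. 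The difficulty is to run this telescoping cleanly through the cyclic wraparound and, more seriously, to adapt it to $B_n$ and $C_n$, where the variables are centrally symmetric pairs and the diameter relations \eqref{Equation_Model_Bn_2}, \eqref{Equation_Model_Bn_3} impose extra identities that must be reconciled with the central symmetry. An alternative finish, avoiding the separability analysis, is to exhibit $\#\mcV$ linearly independent degree vectors $d_P$, so that $\operatorname{rank}\{d_P\}=\#\mcV=\#(\mcV\cup\mcW)-\#K$ and a dimension count against (i) and (ii) closes the argument.
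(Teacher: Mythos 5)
Your steps (i) and (ii) are essentially the paper's own proof: the orthogonality $E_i\cdot d_P=0$ is exactly what the paper establishes in its Lemmas \ref{Lemma_FixingVertex_is_in_LinealitySpace} and \ref{Claim_512} (vertex-incidence counting on primitive exchange quadrilaterals, with the same diameter-weight bookkeeping in types $B_n$, $C_n$ that you describe), and your pairwise-entry independence argument is the one given there. The genuine gap is your step (iii), and it is worth seeing how the paper avoids it: it never proves spanning directly. Its Lemma \ref{Lemma_Nullity_q} cites \cite[Propositions 5.3.1 and 5.1.1]{ilten2021deformation} to identify the dimension of the lineality space of $\mcC_\mcA$ with the corank of any extended exchange matrix, which equals $\#\mcW$ because the extended exchange matrices of these models have full rank; spanning is then immediate from $\#K=\#\mcW$ together with (i) and (ii). You instead set out to prove spanning from scratch, and as written that part is a plan, not a proof. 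In type $A_n$ your telescoping can indeed be pushed through, but the cyclic wraparound you worry about is a genuine constraint: writing $\phi(a)=f(a,a+1)$ for the edge values and $h(c)$ for the common value of $f(a,c)-f(a,c+1)$ (your $\delta$), comparing the two ways of computing $f(a,a+2)$ gives $\phi(a+1)-\phi(a-1)=S-h(a-1)-2h(a)-h(a+1)$ with $S=\sum_c h(c)$, and summing over $a$ yields $(N-4)S=0$; so you obtain $S=0$, hence separability, only for $N\geq 5$, and type $A_1$ must be handled by hand as in Example \ref{ex:A1}. For $B_n$ and $C_n$ you only name the difficulty --- the diameter relations \eqref{Equation_Model_Bn_2}, \eqref{Equation_Model_Bn_3} and the central symmetry --- without resolving it, so the hardest part of the statement remains open in your write-up.

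Your fallback (B) is not an escape route either. In every type covered by the proposition the number of primitive exchange relations equals $\#\mcV$ exactly: in type $A_n$ primitive relations correspond to pairs of non-adjacent boundary edges, whose count equals the number of diagonals, and in types $B_n$, $C_n$ they correspond to centrally symmetric pairs of non-maximal-length diagonals or edges, whose count again equals $\#\mcV$. So ``exhibit $\#\mcV$ linearly independent vectors $d_P$'' means proving that \emph{all} of the $d_P$ are linearly independent, which, once (i) and (ii) are known, is logically equivalent to the spanning claim; it is a dual reformulation, not a reduction. To close the argument you must either carry out the separability analysis uniformly in all three types (including the diameter identities), or simply import the dimension count from \cite{ilten2021deformation} as the paper does --- the latter turns the entire proposition into your steps (i) and (ii) plus counting.
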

\noindent We will prove this proposition in \S\ref{sec:proofs1}.

In Figure \ref{Figure_Example_Lsp_Elt} we depict a general element in the set $K$ in types $A_n$ (left), $B_n$ (middle), and $C_n$ (right). Solid lines denote diagonals or edges corresponding to coordinates with entry $1$, and the dash-dot-dotted line denotes entry $2$.

\begin{figure}
	\centering
	{\includegraphics[height=4cm]{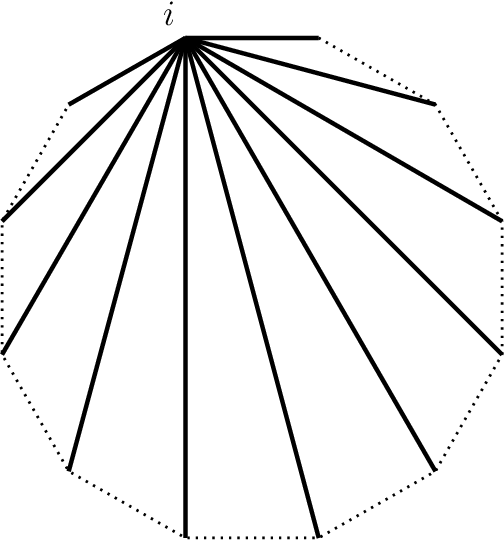}}\hspace{.5cm}
	{\includegraphics[height=4cm]{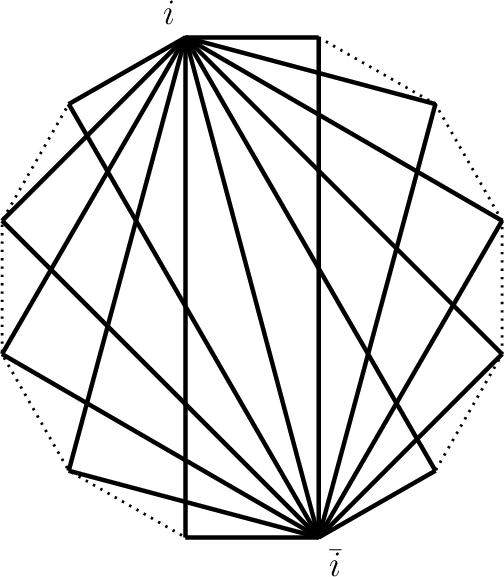}}\hspace{.5cm}
	{\includegraphics[height=4cm]{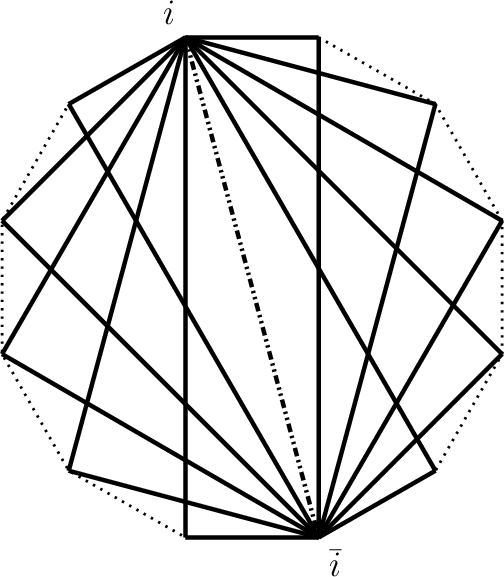}}
	\caption{Generators of the lineality space in types $A_n$, $B_n$, and $C_n$}
	\label{Figure_Example_Lsp_Elt}
\end{figure}

To describe the lineality space in type $D_n$ we define 
\[
	u^\diam=
	\sum_{\substack{l\ \textrm{a blue}\\\textrm{diameter}}} e_l
	-\sum_{\substack{l\ \textrm{a red}\\\textrm{diameter}}} e_l.
\]

\begin{proposition}\label{Proposition_Result3_Dn_Linsp}
	For $n\geq 4$, let $\mcA$ be the cluster algebra of type $D_n$ with the frozen variables described in \S\ref{sec:comb}.
    Then the set 
    \[
	    K(D_n)=\bigg\{{E}_i ~\bigg|~i=1,\ldots, n\bigg\}\cup \{u^{\mathrm{diam}}\}\subseteq \ZZ^{\mathcal{V}\cup\mathcal{W}},
    \]
    is a basis of the lineality space of the Gröbner cone $\mcC_\mcA$ of $\mcA$.
\end{proposition}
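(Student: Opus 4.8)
The plan is to exploit the duality from Lemma \ref{Lemma_Of_The_Whole_Thesis}: since $\mcC_\mcA$ is dual to $\mcC_\prim$, its lineality space $L$ is exactly the orthogonal complement of the linear span of the degrees $d_P$ induced by primitive exchange relations. So it suffices to establish (a) every element of $K(D_n)$ is orthogonal to each $d_P$; (b) $K(D_n)$ is linearly independent; and (c) $L\subseteq\mathrm{span}\,K(D_n)$. Then (a) gives $\mathrm{span}\,K(D_n)\subseteq L$, (c) gives the reverse inclusion, and (b) upgrades the resulting equality $L=\mathrm{span}\,K(D_n)$ to the statement that $K(D_n)$ is a basis. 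Independence (b) is immediate from the diameter coordinates: evaluating $\sum_i\lambda_iE_i+\mu\,u^\diam=0$ on the blue diameter $[a,\overline a]$ gives $\lambda_a+\mu=0$ and on the red diameter $\widehat{[a,\overline a]}$ gives $\lambda_a-\mu=0$, forcing $\mu=0$ and every $\lambda_a=0$ (here I use that $E_i=E_{\overline i}$, so $E_1,\dots,E_n$ already exhaust the $E_i$).

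For (a) I would argue with the exchange quadrilaterals of \S\ref{sec:quadrilateral}. Writing a primitive relation as $xx'=y_1+y_2$ with primitive term $y_1$, I must check $E_i\cdot\deg(xx')=E_i\cdot\deg(y_1)$ and $u^\diam\cdot\deg(xx')=u^\diam\cdot\deg(y_1)$. For $E_i$, note $E_i\cdot\deg(z)$ counts, with multiplicity, the variables of $z$ whose underlying (pair of) diagonals is incident to the vertex $i$; at every vertex of an exchange quadrilateral exactly one diagonal and exactly one edge of each opposing pair is incident, so this count agrees for $\deg(xx')$ and for each $\deg(y_j)$, summed over $\bQ$ and $\overline\bQ$. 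The colored-diameter factor $x_{[a,\overline a]}x_{\widehat{[a,\overline a]}}$ in \eqref{Equation_Model_Dn_2} contributes the two colored copies of the shared diameter, one per quadrilateral, so the bookkeeping still balances. For $u^\diam$, the only primitive relations involving diameters are \eqref{Equation_Model_Dn_2} and \eqref{Equation_Model_Dn_3}, where diameters occur either as a blue--red pair $x_{[a,\overline a]}x_{\widehat{[a,\overline a]}}$ or as the product $x_{[a,\overline a]}x_{\widehat{[b,\overline b]}}$; on each side of these relations the blue count minus the red count is zero, so $u^\diam\cdot d_P=0$. (Type $4$ relations are never primitive and may be ignored.)

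The substance is (c), which I would prove by normalization followed by a downward induction on diagonal length. Given $\omega\in L$, the two primitive instances of \eqref{Equation_Model_Dn_3} coming from the exchangeable pairs $\{x_{[a,\overline a]},x_{\widehat{[a+1,\overline{a+1}]}}\}$ and $\{x_{\widehat{[a,\overline a]}},x_{[a+1,\overline{a+1}]}\}$ have the same primitive term $x_{[a,\overline{a+1}]}$; subtracting the two relations $\omega\cdot d_P=0$ shows $\omega_{[a,\overline a]}-\omega_{\widehat{[a,\overline a]}}$ is independent of $a$. Set $\mu=\frac12(\omega_{[a,\overline a]}-\omega_{\widehat{[a,\overline a]}})$ and $\lambda_a=\frac12(\omega_{[a,\overline a]}+\omega_{\widehat{[a,\overline a]}})$, and replace $\omega$ by $\omega'=\omega-\sum_a\lambda_aE_a-\mu\,u^\diam\in L$, which by construction vanishes on all diameters. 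Now \eqref{Equation_Model_Dn_3} gives $\omega'_{[a,\overline{a+1}]}=0$, killing all diagonals of length $n-1$; relation \eqref{Equation_Model_Dn_2} then gives $\omega'_{[a+1,a+n-1]}=0$, killing length $n-2$; and the primitive flip \eqref{Equation_Model_Dn_1} with $[a,a+1],[c,c+1]$ edges yields $\omega'_{[a+1,c]}=\omega'_{[a,c]}+\omega'_{[a+1,c+1]}-\omega'_{[a,c+1]}$, expressing a shorter diagonal through three longer ones. Inducting downward on length propagates the vanishing through every non-diameter diagonal, and the same flip with $c=p+1$ finally annihilates the edge variables, so $\omega'=0$ and $\omega\in\mathrm{span}\,K(D_n)$.

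The main obstacle is the careful organization of step (c): one must verify that a genuine primitive exchange relation of the required shape exists at each stage, since the counterclockwise-order and edge hypotheses of Proposition \ref{PropModelDn} restrict the flip \eqref{Equation_Model_Dn_1} to output lengths in the range $2\le\mathrm{length}\le n-3$, with the boundary lengths $n-1$ and $n-2$ handled separately by \eqref{Equation_Model_Dn_3} and \eqref{Equation_Model_Dn_2}, and with $n\ge4$ guaranteeing that the base cases and the final edge step are available. The only other delicate point is the incidence bookkeeping in (a) for the colored diameters, where the single geometric diameter splits into the two cluster variables $x_{[a,\overline a]}$ and $x_{\widehat{[a,\overline a]}}$ and one must match this against the two overlapping quadrilaterals in a type $2$ exchange.
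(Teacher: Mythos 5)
Your proposal is correct, and parts (a) and (b) coincide with the paper's proof: the orthogonality of the $E_i$ to each $d_P$ is exactly Lemmas \ref{Lemma_FixingVertex_is_in_LinealitySpace} and \ref{Claim_512} (including your correct identification of the one delicate case, the colored-diameter factor in a primitive type~2 exchange, where the two colored variables are matched against the shared edge of the overlapping quadrilaterals $\bQ$ and $\overline\bQ$), the $u^\diam$ computation is verbatim the paper's case analysis of types 1, 2, 3, and the independence argument via the blue/red diameter coordinates is identical. Where you genuinely diverge is step (c). The paper never proves the containment $L\subseteq\mathrm{span}\,K(D_n)$ directly: it instead invokes Lemma \ref{Lemma_Nullity_q}, which pulls in deformation-theoretic input from the Ilten--N\'ajera Ch\'avez--Treffinger paper (lineality space dimension $=$ stabilizer dimension $=$ corank of the extended exchange matrix, computed to be one more than the number of frozen variables in this $D_n$ model), and then concludes by counting: $K(D_n)$ is an independent subset of $L$ of the right cardinality. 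You replace this with a self-contained elimination argument: normalize $\omega$ by span $K(D_n)$ so that it vanishes on all colored diameters (using the two color-swapped primitive instances of \eqref{Equation_Model_Dn_3}, which indeed share the primitive term $x_{[a,\overline{a+1}]}$), then kill lengths $n-1$ and $n-2$ via \eqref{Equation_Model_Dn_3} and the primitive form of \eqref{Equation_Model_Dn_2} (with $b=a+1$, $c=\overline{a}-1$), and finally run the downward induction on length through the primitive flips \eqref{Equation_Model_Dn_1} with $b=a+1$, $d=c+1$, which terminates at the edges. I checked the index constraints: the flip requires $a+2\le c\le a+n-2$, so it produces exactly the lengths $1,\dots,n-3$ from lengths $\le n-1$, and the base cases are available for all $n\ge 4$. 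What each approach buys: the paper's dimension count is short and is shared with the $A_n$, $B_n$, $C_n$ proposition, but it is not self-contained; your argument is longer but purely combinatorial, makes no appeal to the Hilbert-scheme stabilizer or to the rank of the extended exchange matrix (in effect it re-derives that rank), and it exhibits explicitly how the primitive degrees cut out the lineality space. Two cosmetic points: the final flip step refers to an undefined ``$c=p+1$'' (you mean the case $c=a+2$, where $[a+1,c]$ is an edge), and your part (a) is a sketch that would need the case discipline of the paper's Lemma \ref{Claim_512} to be fully rigorous, though the claims you make there are accurate.
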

\noindent We will prove this proposition in \S\ref{sec:proofs1}.

In Figure \ref{Figure_Lsp_Dn} we depict various elements of $K(D_n)$.
The picture on the left shows ${E}_i$ and the one on the right shows $u^\diam$.
Red and blue diameters are depicted by red and blue lines; other diagonals or edges are black. Coordinates with $+1$ entries correspond to solid lines, and coordinates with $-1$ entries correspond to dashed lines.

\begin{figure}
    \centering    
    \includegraphics[height=4cm]{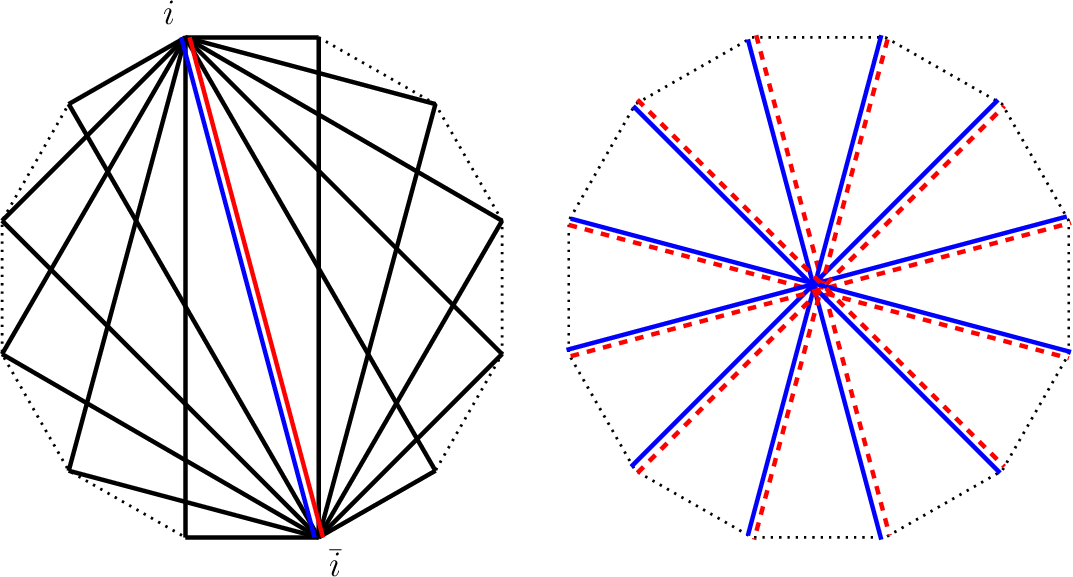}   
    \captionsetup{justification=centering}
    \caption{Generators of the lineality space in type $D_n$.}
    \label{Figure_Lsp_Dn}
\end{figure}

\subsection{Ray Generators}\label{sec:rays}
We now describe the generators for the rays of $\mcC_\mcA$.
We say an edge or diagonal of $\bP_N$ is of \emph{non-maximal-length} if the length of $l$ is less than $\lfloor N/2\rfloor $.
For such $l$, let $F_l$ be the set of diagonals and edges of $\bP_N$ with both endpoints contained in the minor arc of $l$.

In type $A_n$ we then set
\[
	v(l)=-\sum_{k\in F_l} e_{k}.
\]
In types $B_n,C_n,D_n$ we set
\[
	v(l)=-\sum_{k\in F_l} e_{\{k,\overline {k}\}}.
\]

\begin{theorem}\label{Theorem_Statement_Result3_ABCn}
	Let $\mcA$ be the cluster algebra of type $A_n$ ($n\geq 1$), or $B_n$ or $C_n$ $(n\geq 2)$ with the frozen variables described in \S\ref{sec:comb}.
    Then the elements of the set 
    \[
	    \mcG=\{v(l):l \text{ is a diagonal or edge of }\bP_{N} \text{ of non-maximal length}\}
    \]
    generate the rays of the Gröbner cone $\mcC_\mcA$ modulo lineality space.
\end{theorem}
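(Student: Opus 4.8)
The plan is to prove the two containments $\mathrm{cone}(\mcG)+L\subseteq\mcC_\mcA$ and $\mcC_\mcA\subseteq\mathrm{cone}(\mcG)+L$, where $L$ is the lineality space of Proposition~\ref{Proposition_ABCn_LinSp}; their conjunction says exactly that the images of the $v(l)$ generate $\mcC_\mcA$ modulo $L$. Throughout I would use Lemma~\ref{Lemma_Of_The_Whole_Thesis} in the form $\mcC_\mcA=\{\omega:\omega\cdot d_P\ge 0\text{ for every primitive }P\}$, together with the recipe of \S\ref{sec:quadrilateral} for reading off $d_P=\deg(xx')-\deg(y_1)$ from the primitive exchange quadrilateral(s): each diagonal of $\bQ$ (and $\overline\bQ$) contributes $+1$ and each primitive-term edge contributes $-1$, with the multiplicities and pair-identifications dictated by the type exactly as in the proof of Lemma~\ref{lemma:weighteddegree}. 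The degenerate type $A_1$ case is handled by direct inspection, as in Example~\ref{ex:A1}; I would assume $n\ge 2$ from here on.

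First I would prove membership, $\mcG\subseteq\mcC_\mcA$. Fix a non-maximal-length $l$ and a primitive relation $P$. Since $v(l)\cdot d_P=-\sum_{k\in F_l}(d_P)_k$ and the nonzero entries of $d_P$ sit only on the (pair-)diagonals and primitive-term edges of the quadrilateral(s), the inequality $v(l)\cdot d_P\ge 0$ reduces to the combinatorial claim that $F_l$ contains at least as many primitive-term edges as diagonals of the quadrilateral(s). I would prove this by a short case analysis on the number of vertices of $\bQ$ lying in the minor arc of $l$. Convexity rules out the only dangerous configuration, in which a diagonal has both endpoints in the arc but neither primitive-term edge does: an arc containing two opposite vertices of $\bQ$ must contain a third, and that third vertex completes a primitive-term edge inside the arc. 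In types $B_n,C_n$ the same count is run on the symmetric pair $\{\bQ,\overline\bQ\}$, with the factor $r$ and the overlapping diameter treated as a double (resp. single) line precisely as in Lemma~\ref{lemma:weighteddegree}.

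I would then record extremality: each $v(l)$ spans a genuine ray of $\mcC_\mcA$ modulo $L$. For this I would collect the primitive relations $P$ with $v(l)\cdot d_P=0$; by the case analysis these are all primitive quadrilaterals except the ``ears'' whose primitive-term edge is swallowed by the minor arc of $l$ (the strictly positive cases). A direct rank computation with these explicit $d_P$ shows they span a codimension-one subspace of $L^\perp=\mathrm{span}\{d_P\}$, which is exactly the condition for $v(l)$ to be extreme. This confirms that the generating set has no redundancy beyond the coincidences modulo $L$ already visible in Example~\ref{ex:A1}.

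The main obstacle is the reverse containment $\mcC_\mcA\subseteq\mathrm{cone}(\mcG)+L$, i.e. that the $v(l)$ together with $L$ already generate the whole cone. I would dualize: membership says $\mcC_\prim\subseteq\{d\in L^\perp:d\cdot v(l)\ge 0\ \forall l\}$, and it suffices to prove the reverse inclusion, namely that every $d\in L^\perp$ with $\sum_{k\in F_l}d_k\le 0$ for all non-maximal $l$ is a nonnegative combination of the primitive $d_P$. I expect to prove this by induction on the polygon size $N$, peeling a single vertex: using the lineality generators $E_i$ and the ``ear'' relations to clear the entries of $d$ on the segments incident to the peeled vertex, thereby reducing to a degree vector supported on $\bP_{N-1}$ and invoking the inductive hypothesis. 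The delicate points—checking that the reduction keeps $d$ inside the cone, bookkeeping the frozen (edge) coordinates, accounting for the parity of $N$ in the lineality rank, and carrying the central symmetry and the two values of $r$ through the induction in types $B_n,C_n$—are where the real work lies, and this is the step I expect to occupy \S\ref{sec:proofs1}.
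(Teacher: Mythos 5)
Your membership step ($\mcG\subseteq\mcC_\mcA$) is sound and is essentially the paper's argument (Lemmas \ref{Lemma_PrimQuadInCompleteGraph} and \ref{lemma:newdp}), but both remaining steps have genuine gaps. For extremality, your description of the primitive relations pairing strictly positively with $v(l)$ is wrong: having a primitive-term edge of $\bQ$ ``swallowed'' by the minor arc of $l$ does \emph{not} force $v(l)\cdot d_P>0$. If an element of $\bQ(1)$ other than $l$ has both endpoints in the minor arc of $l$, then contiguity of the arc forces one or both diagonals of $\bQ(0)$ to have both endpoints there as well, and the counts cancel to give $v(l)\cdot d_P=0$ (for instance, $l=[\delta_-,\delta_-+3]$ and the primitive quadrilateral on $\delta_--1,\delta_-,\delta_-+2,\delta_-+3$). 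The correct statement is that $v(l)\cdot d_P>0$ exactly when $l$ is itself a shortest element of $\bQ(1)$, i.e.\ $l\in L_P$, and this $P$ is unique; with your (strictly larger) positive set, the ``direct rank computation'' --- which is in any case never performed --- would be run on the wrong collection of vectors. More seriously, the reverse containment $\mcC_\mcA\subseteq\mathrm{cone}(\mcG)+L$, which is the real content of the theorem, is only a plan. Beyond the delicate points you list but do not resolve, your induction has a structural obstruction you do not mention: a primitive quadrilateral of $\bP_{N-1}$ that uses the new edge $[p-1,p+1]$ is \emph{not} primitive in $\bP_N$ (that segment is a diagonal of $\bP_N$), so the inductive hypothesis expresses the reduced vector in terms of degrees that are not generators of $\mcC_\prim$ upstairs; one must separately prove these are nonnegative combinations of $\bP_N$-primitive degrees, and also that the clearing coefficients can be chosen nonnegative. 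In types $B_n,C_n$ peeling a single vertex destroys the central symmetry, so the induction cannot even be set up as stated.

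The idea you are missing is that the sharp vanishing pattern already finishes the whole proof. Once one knows (i) $v(l)\cdot d_P>0$ if $l\in L_P$ and $v(l)\cdot d_P=0$ for every other primitive $P$, and (ii) every non-maximal-length $l$ lies in $L_P$ for exactly one $P$ while every primitive $P$ has $L_P\neq\emptyset$, elementary convex geometry does the rest: for each $P$, the hyperplane orthogonal to $v(l)$ (any $l\in L_P$) supports $\mcC_\prim$ and meets it exactly in the cone spanned by $\{d_{P'}:P'\neq P\}$; these ``all-but-one'' cones are proper faces, pairwise incomparable, and every proper face of $\mcC_\prim$ is contained in one of them, so they are precisely the facets of $\mcC_\prim$. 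Dually, their inner normals $v(l)$ generate the rays of $\mcC_\mcA=\mcC_\prim^{\vee}$ modulo the lineality space. This is exactly how the paper argues in \S\ref{sec:proofs2}: extremality and generation come simultaneously, with no rank computation and no induction on $N$.
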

\noindent We will prove this Theorem in \S\ref{sec:proofs2}. In types $B_n$ and $C_n$, clearly $v(l)$ and $v(\overline l)$ generate the same ray modulo lineality space. We will also see that this is the case in type $A_n$ when $n$ is odd and $l$ has length $(n+1)/2$.

\begin{figure}
    \centering    
    \includegraphics[height=4cm]{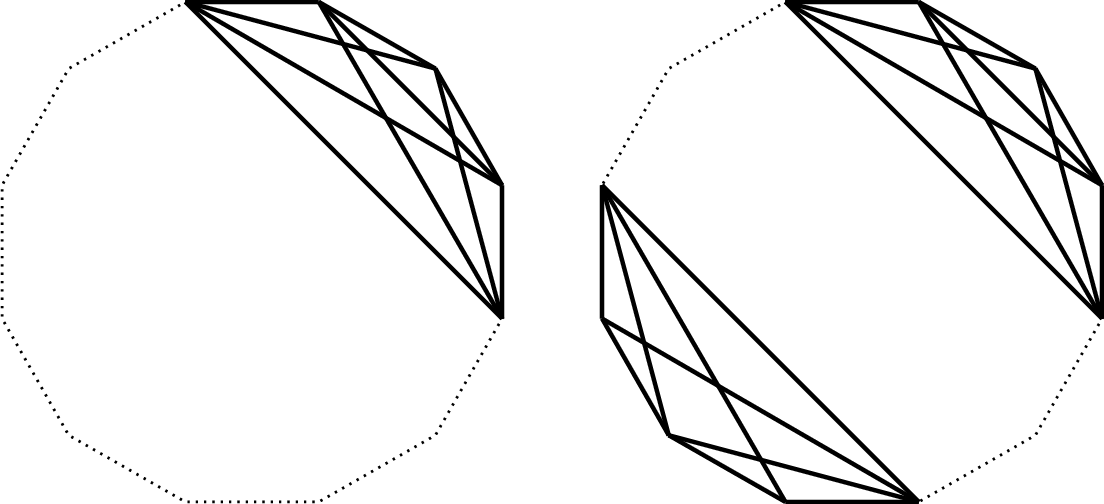}   
    \captionsetup{justification=centering}
    \caption{Generators of the Gröbner cone in types $A_n$, $B_n$, and $C_n$}
    \label{Figure_Example_GCone_Gen}
\end{figure}
In Figure \ref{Figure_Example_GCone_Gen} we depict some vectors $v_l$ in type $A_n$ (left) and in types $B_n$ and $C_n$ (right). 
Black solid lines indicate diagonals and edges corresponding to coordinates with entry $-1$.

For type $D_n$ we need more notation.
For $i=1,\ldots, n$ set 
\[
	w(i)=e_{[i,\overline i]}+v([i+1,\overline i-1])\qquad \widehat{w}(i)=e_{\widehat{[i,\overline i]}}+v([i+1,\overline i-1]).
\]
For $1\leq j\leq n$ and $0\leq k \leq n-1$ we then set
\[
	w(j,k)=\sum_{i=0}^{k/2} w(j+2i)-\sum_{i=1}^{k/2} \widehat{w}(j+2i-1)
\]
if $k$ is even and
\[
	w(j,k)=	v([j+k,\overline j+k+1])+\sum_{i=0}^{(k-1)/2 } w(j+2i)-\sum_{i=1}^{(k+1)/2} \widehat{w}(j+2i-1)
\]
if $k$ is odd.

\begin{theorem}\label{Theorem_Statement_Result3_Dn}
	For $n\geq 4$, let $\mathcal{A}$ be the cluster algebra of type $D_n$ with the frozen variables described in \S\ref{sec:comb}.
    Then the elements in the union of the sets
	\[
	\{v(l): l \text{ is a diagonal or edge of $\bP_N$ of length at most $n-2$}\}
\]
and 
	\[
		\{w(j,k)\ :\ 1\leq j\leq n, 0\leq k\leq n-1\}
\]
    generate the rays of the Gröbner cone $\mcC_\mcA$ modulo lineality space.
\end{theorem}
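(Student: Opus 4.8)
The plan is to exploit the duality of Lemma \ref{Lemma_Of_The_Whole_Thesis}, which identifies $\mcC_\mcA$ as the cone dual to $\mcC_\prim$; describing the rays of $\mcC_\mcA$ modulo lineality then amounts to understanding the defining inequalities $\omega\cdot d_P\geq 0$ coming from primitive exchange relations $P$. First I would enumerate these inequalities combinatorially. By the discussion in \S\ref{sec:quadrilateral}, every primitive exchange relation in type $D_n$ has a primitive exchange quadrilateral of type 1, 2, or 3 (type 4 relations are never primitive), and its degree is $d_P=\deg(xx')-\deg(y_P)$ for the primitive term $y_P$. Using the combinatorial model of Proposition \ref{PropModelDn}, I would record, for each such $P$, exactly which (colored) diameters and diagonal pairs occur in $xx'$ and in $y_P$, paying particular attention to the diameter relations \eqref{Equation_Model_Dn_2} and \eqref{Equation_Model_Dn_3}, where blue and red diameters interact.

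Next I would verify that each proposed generator lies in $\mcC_\mcA$, that is, $v(l)\cdot d_P\geq 0$ and $w(j,k)\cdot d_P\geq 0$ for all primitive $P$. For the vectors $v(l)$ this should follow the pattern already established for types $A_n$, $B_n$, $C_n$ in Theorem \ref{Theorem_Statement_Result3_ABCn}: pairing $v(l)$ with $d_P$ reduces, via Lemma \ref{lemma:weighteddegree}, to a count of crossings of the minor arc of $l$ with the edges of the exchange quadrilateral, and the sign is controlled by the crossing identity of Lemma \ref{Lemma_CombFact_QuadCrossing}. The vectors $w(j,k)$ are genuinely new: here I would expand the alternating sum defining $w(j,k)$ and check that the contributions of the blue and red diameter coordinates cancel precisely against the mixed terms $x_{[a,\overline a]}x_{\widehat{[a,\overline a]}}$ in the type 2 relations \eqref{Equation_Model_Dn_2} and against the blue/red pairings in the type 3 relations \eqref{Equation_Model_Dn_3}.

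To conclude that these vectors generate \emph{all} rays, I would argue in two steps. For extremality, I would exhibit for each generator a family of primitive relations on which it attains equality, and show that these active constraints determine the vector uniquely modulo the lineality space of Proposition \ref{Proposition_Result3_Dn_Linsp}, forcing each generator to span an extreme ray. For completeness, I would show that every $\omega\in\mcC_\mcA$ is a non-negative combination of the listed generators plus a lineality vector, most likely by induction on the lengths of the diagonals involved, peeling off at each step a suitable multiple of some $v(l)$ or $w(j,k)$ so as to make an additional coordinate vanish while remaining in the cone.

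The main obstacle will be the bookkeeping for the colored diameters. The generators $w(j,k)$ are assembled from the auxiliary vectors $w(i)$ and $\widehat w(i)$ precisely so that their interaction with the diameter exchange relations \eqref{Equation_Model_Dn_2} and \eqref{Equation_Model_Dn_3} produces the correct cancellations, and verifying both the membership inequalities and the extremality for these vectors requires tracking how the relations exchange blue and red diameters, a subtlety absent in types $A_n$, $B_n$, $C_n$. I expect the completeness argument, and in particular isolating the correct generator to subtract when both plain-diagonal and colored-diameter coordinates are present, to be the most delicate part.
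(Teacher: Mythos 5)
Your first two stages track the paper closely: the duality of Lemma \ref{Lemma_Of_The_Whole_Thesis}, the combinatorial enumeration of the degrees $d_P$ via primitive quadrilaterals of types 1, 2, 3, and the membership checks $v(l)\cdot d_P\geq 0$, $w(j,k)\cdot d_P\geq 0$. One citation slip: the fact controlling $v(l)\cdot d_P$ is not Lemma \ref{lemma:weighteddegree} or Lemma \ref{Lemma_CombFact_QuadCrossing} (those concern the compatibility-degree vectors $\omega_v$ and crossing counts), but Lemma \ref{Lemma_PrimQuadInCompleteGraph}, which counts how many sides and diagonals of a primitive quadrilateral lie in the set $F_l$ of segments with both endpoints on the minor arc of $l$; this incidence count is what yields Lemma \ref{lemma:newdp} ($v(l)\cdot d_P>0$ exactly when $l\in L_P$, and $=0$ otherwise) and its $D_n$ companion Lemma \ref{lemma:dpnew2}. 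Your extremality criterion (active constraints determine the vector up to scaling modulo the lineality space of Proposition \ref{Proposition_Result3_Dn_Linsp}) is a valid alternative to the paper's dual certificates $u=\sum_{P'} d_{P'}$, though it presupposes you already know the inequality description well enough to compute ranks.

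The genuine gap is in your completeness step. The paper never peels off generators primally; it exploits a dimension count that your proposal does not identify: by Lemma \ref{Lemma_Nullity_q} the lineality space of $\mcC_\mcA$ in type $D_n$ has dimension $\#\mcW+1$, so $\mcC_\prim$ has dimension exactly one less than the number of primitive exchange relations. Consequently every facet of $\mcC_\prim$ must contain all but at most \emph{two} of the generators $d_P$, and finding all rays of $\mcC_\mcA$ modulo lineality (equivalently, all facets of $\mcC_\prim$) reduces to a finite check over pairs $(P_1,P_2)$: either some listed vector $g$ satisfies $g\cdot d_P\geq 0$ for all $P$ with equality precisely off $\{P_1,P_2\}$ (so $g$ is the inward normal of that facet), or one must exhibit a certificate that the $d_P$ with $P\neq P_1,P_2$ do \emph{not} span a facet. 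This second case is entirely absent from your plan: when $P_1,P_2$ are type 3 exchanges whose succeeding diagonals have the same color, the paper uses $w(j,k)-\widehat{w}(j+k+1)$, which pairs to zero with every other $d_P$ but with opposite signs on $d_{P_1},d_{P_2}$ (Lemma \ref{lemma:dpnew2}), to rule out a spurious facet. Without ruling these out you cannot conclude your list of rays is complete. Your substitute, a peeling induction, has no workable mechanism here: modulo lineality $\mcC_\mcA$ has dimension $n^2-1$ but $2n^2-2n$ rays, so it is far from simplicial, and subtracting a multiple of a generator to make a coordinate vanish will in general exit the cone; the proposal gives no rule for which generator to subtract, no reason the intermediate vectors stay in $\mcC_\mcA$, and no termination argument. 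As written, the completeness argument would not go through.
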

\noindent We will prove this in \S\ref{sec:proofs2}. Notice that $v(l)$ generates the same ray as $v(\overline l)$.

In Figure \ref{Figure_Constr_Dn} we depict some generators of the rays of the Gr\"obner cone $\mcC_\mcA$ in type $D_n$.
The top left picture is an element $v(l)$ constructed from a diagonal of length at most than $n-2$.
The top right, bottom left and bottom right pictures show the elements $w(1,0)$, $w(1,1)$ and $w(1,2)$ respectively.
Red and blue diameters are depicted by red and blue lines; other diagonals or edges are black. Coordinates with $+1$ entries correspond to solid lines, and coordinates with $-1$ entries correspond to dashed lines.

\begin{figure}
    \centering    
    \includegraphics[height=4cm]{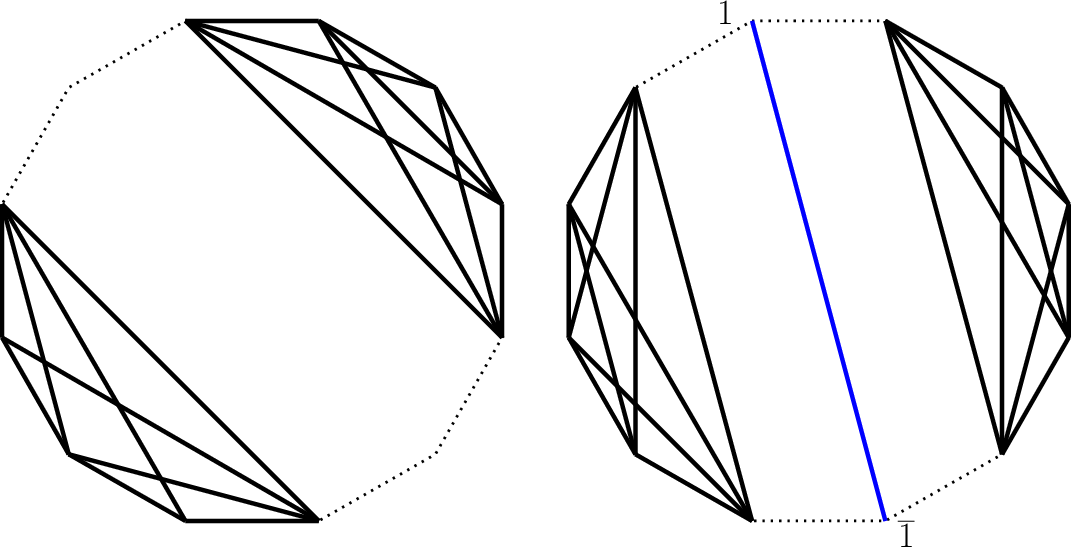}\vspace{1cm}
    \includegraphics[height=4cm]{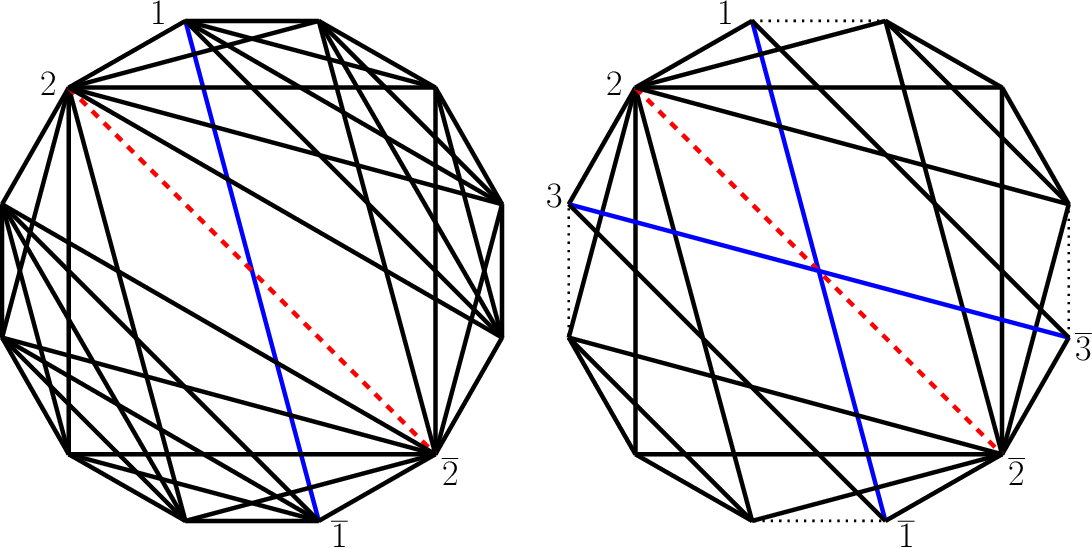}  
    \captionsetup{justification=centering}
    \caption{Generators of the Gröbner cone in type $D_n$}
    \label{Figure_Constr_Dn}
\end{figure}

\subsection{Proofs of Propositions  \ref{Proposition_ABCn_LinSp} and \ref{Proposition_Result3_Dn_Linsp}}
\label{sec:proofs1}
The case of $A_1$ follows from Example \ref{ex:A1}, so for the remainder of this subsection and the next we will always assume that $n\geq 2$. Most importantly, any primitive exchange relation $P$ has a unique primitive term, and we have the degree $d_P$ induced by the primitive exchange relation $P$.

We first determine the dimension of the lineality space of the Gr\"obner cone:
\begin{lemma}\label{Lemma_Nullity_q}
    Let $\mcA$ be the cluster algebra of type $A_n$, $B_n$, $C_n$,  with frozen variables as in \S\ref{sec:comb}.
    The dimension of the lineality space of the Gr\"obner cone $\mcC_\mcA$ is equal to the number of frozen variables. For $\mcA$ the cluster algebra of type $D_n$ with frozen variables as in \S\ref{sec:comb}, the dimension of the lineality space is one more than the number of frozen variables.
\end{lemma}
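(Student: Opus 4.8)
The plan is to pass to the dual description of the Gr\"obner cone and reduce the statement to a single linear-algebra computation. By Lemma \ref{Lemma_Of_The_Whole_Thesis}, $\mcC_\mcA$ is the cone dual to $\mcC_\prim$, and $\mcC_\prim$ is generated by the degrees $d_P$ induced by the primitive exchange relations $P$. The lineality space of a dual cone is the orthogonal complement of the linear span of the generators of the primal cone, so
\[
\dim\big(\text{lineality of }\mcC_\mcA\big)=|\mcV\cup\mcW|-\rank\{d_P\}=\dim U,\qquad U:=\{\omega:\omega\cdot d_P=0\ \text{for all primitive }P\}.
\]
It therefore suffices to compute $\dim U$ and to show $\dim U=|\mcW|$ in types $A_n,B_n,C_n$ and $\dim U=|\mcW|+1$ in type $D_n$.

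I would next rewrite the condition $\omega\cdot d_P=0$ using exchange quadrilaterals. Writing $d_P=\deg(\bQ(0))-\deg(\bQ(1))$, where $\bQ(0)$ is the pair of crossing diagonals being exchanged and $\bQ(1)$ is the pair of opposite sides giving the primitive term (the polygon-edge pair $\bQ(2)$ does not appear), the condition $\omega\cdot d_P=0$ becomes the vanishing of an alternating sum of the $\omega$-entries over the segments of the exchange quadrilateral(s). In type $A_n$, by Remark \ref{rem:tau} the primitive relations are exactly those whose quadrilateral has vertices $a,a+1,c,c+1$; writing $F(i,j)=\omega_{[i,j]}$ for the entry indexed by the segment $[i,j]$ (edge or diagonal), the condition reads
\[
F(a,c)+F(a+1,c+1)-F(a+1,c)-F(a,c+1)=0
\]
for all such $a,c$. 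This is the vanishing of a discrete mixed second difference of $F$, whose solutions are the additive functions $F(i,j)=h(i)+h(j)$ for an arbitrary function $h$ on the vertices of $\bP_N$. Since every pair of vertices spans a segment, the assignment $h\mapsto F$ is injective for $N\ge 3$, so $U$ is the space of additive $F$, of dimension $N=|\mcW|$, with basis $\{E_i\}$ corresponding to $h$ being the function that is $1$ at $i$ and $0$ elsewhere. This simultaneously establishes Proposition \ref{Proposition_ABCn_LinSp} in type $A_n$.

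For types $B_n,C_n,D_n$ I would run the same argument on the centrally symmetric models, reading $\omega\cdot d_P=0$ over the pair $\bQ,\overline{\bQ}$ and keeping track of the multiplicities in the exchange relations \eqref{Equation_Model_Bn_2}, \eqref{Equation_Model_Bn_3}, \eqref{Equation_Model_Dn_2}, \eqref{Equation_Model_Dn_3} (the doubled diameter in $C_n$, the blue--red diameter pair in $D_n$). The type 1 relations, which carry no multiplicities, again produce mixed second differences and force the additive form $F(i,j)=h(i)+h(j)$ on the non-diameter entries; central symmetry $F(i,j)=F(\overline i,\overline j)$ then forces $h(i)=h(\overline i)$. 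As $h$ ranges over antipodally invariant vertex functions one obtains $\dim U=n+1=|\mcW|$ in types $B_n,C_n$, with basis $\{E_i\}$. In type $D_n$ the antipodally invariant $h$ contributes $n$ parameters, but since diameters of the same color never cross, the two colors of a diameter are linked only through the type 3 relations \eqref{Equation_Model_Dn_3}; this leaves exactly one additional free parameter recording the blue-minus-red value, namely the direction $u^{\diam}$, so $\dim U=n+1=|\mcW|+1$.

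The main obstacle I anticipate is precisely this last step: showing that the mixed-difference relations, restricted to the quadrilaterals that actually occur as \emph{primitive} exchanges, connect all segments of $\bP_N$ so that $F$ is pinned down to the additive form, while accounting correctly for the cyclic wrap-around, the exponent weights in the $C_n$ and $D_n$ relations, and the fact that same-color diameters do not cross. Verifying that this connectivity holds (so that the additive datum $h$ is determined up to the stated freedom) and that the diameter-color constraint contributes exactly one extra dimension in type $D_n$ and none in the other types is the crux of the argument; once it is in place, the dimension counts $|\mcW|$ and $|\mcW|+1$ follow immediately, and the explicit bases $\{E_i\}$ and $\{E_i\}\cup\{u^{\diam}\}$ identified above yield Propositions \ref{Proposition_ABCn_LinSp} and \ref{Proposition_Result3_Dn_Linsp} as direct corollaries.
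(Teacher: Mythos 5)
Your dual-cone reduction is sound: by Lemma \ref{Lemma_Of_The_Whole_Thesis}, the lineality space of $\mcC_\mcA$ is the orthogonal complement of $\mathrm{span}\{d_P\}$, so the lemma is equivalent to computing $\rank\{d_P\}$. This is a genuinely different route from the paper's, which never computes this rank combinatorially: the paper quotes \cite[Proposition 5.3.1]{ilten2021deformation} to identify the lineality dimension with the dimension of a stabilizer on a multigraded Hilbert scheme, then \cite[Proposition 5.1.1]{ilten2021deformation} to identify that stabilizer dimension with the corank of an extended exchange matrix, and concludes using the known ranks of those matrices (full rank in types $A_n,B_n,C_n$; rank $n-1$ in the $D_n$ model). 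Your approach, if completed, would be self-contained and would subsume Propositions \ref{Proposition_ABCn_LinSp} and \ref{Proposition_Result3_Dn_Linsp} in one stroke, which is attractive.

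However, there is a genuine gap, and it sits exactly where you flag it, which makes it a missing proof rather than a deferred routine check. What your argument actually establishes is the easy inclusion: the additive functions (the span of the $E_i$, together with $u^\diam$ in type $D_n$) satisfy all equations $\omega\cdot d_P=0$; this is essentially the content of the paper's Lemmas \ref{Lemma_FixingVertex_is_in_LinealitySpace} and \ref{Claim_512}, and it only gives the lower bound on the lineality dimension. The entire content of Lemma \ref{Lemma_Nullity_q} is the matching upper bound, i.e.\ that the system has \emph{no other} solutions. In type $A_n$ this is equivalent to linear independence of the $N(N-3)/2$ vectors $d_P$ (since $\binom{N}{2}-N(N-3)/2=N$), which is not obvious: only the $N$ equations coming from quadrilaterals on four consecutive vertices contain an edge coordinate, so there is no immediate "each equation has a private variable" argument, and the cyclic wrap-around genuinely matters (for $N$ even the additive functions do not even realize all edge-value assignments). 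In types $B_n,C_n,D_n$ the unverified step must additionally extract different answers from nearly identical systems: the type 3 primitive relation \eqref{Equation_Model_Bn_3} forces diameter entries $t_a$ with $t_a+t_{a+1}=h(a)+h(a+1)$ when $r=1$ but $t_a+t_{a+1}=2h(a)+2h(a+1)$ when $r=2$, which is why $E_i$ has diameter entry $1$ in type $B_n$ but $2$ in type $C_n$; your uniform phrase "additive form with basis $\{E_i\}$" hides this, and your parenthetical about "the doubled diameter in $C_n$" has the multiplicities backwards --- it is the type $B_n$ relation \eqref{Equation_Model_Bn_2} that contains $x_{[a,\overline a]}^{2}$. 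None of this suggests the approach would fail, and your dimension targets are all correct, but as written the proposal proves only half the lemma; the half it omits is the half the paper's citation-based proof exists to supply.
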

\begin{proof}
	By \cite[Proposition 5.3.1]{ilten2021deformation}, the dimension of the lineality space of $\mcC_\mcA$  is equal to the dimension of the stabilizer of $\mcA$ in the multigraded Hilbert scheme of \cite[\S 5.1 and 5.2]{ilten2021deformation}.
	By \cite[Proposition 5.1.1]{ilten2021deformation}, the dimension of the stabilizer is equal to the dimension of the cokernel of any extended exchange matrix. But for the cluster algebras $\mcA$  of type $A_n$, $B_n$, $C_n$ we are considering, any extended exchange matrix has full rank, see e.g.~\cite[Example 5.2.11]{ilten2021deformation}.
    Hence, the cokernel of any extended exchange matrix has dimension $m-n$, which is exactly the number of frozen variables.

    In the $D_n$ case we consider it is straightforward to verify that any extended exchange matrix has rank $n-1$, so the stabilizer dimension is one larger than the number of frozen variables.
\end{proof}

Next, we need some more combinatorics.
Let $\bQ$ be an exchange quadrilateral in $\bP_N$. As in \S\ref{sec:classical}, denote the diagonals of $\bQ$ by $\bQ(0)$. 
Recall that $\bQ$ is primitive if it has (at least) one pair of opposing edges that are both edges in $\bP_N$. Denote the other pair of opposing edges by $\bQ(1)$. 

    \begin{figure}
        \centering    
        \includegraphics[height=4cm]{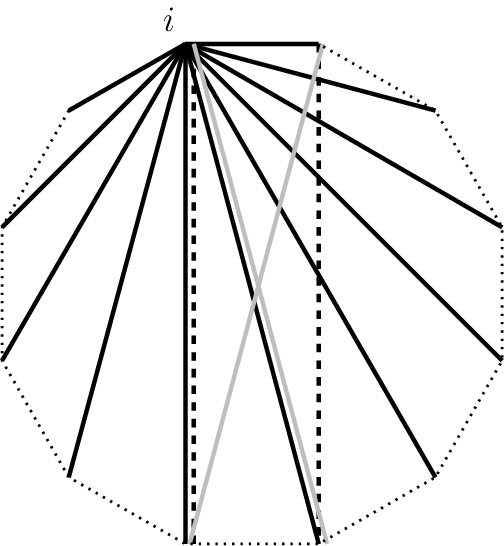}   
        \captionsetup{justification=centering}
	\caption{Primitive quadrilaterals in Lemma \ref{Lemma_FixingVertex_is_in_LinealitySpace}}
        \label{Figure_Fix_Vertex_LinSp}
    \end{figure}
\begin{lemma}\label{Lemma_FixingVertex_is_in_LinealitySpace}
    Let $\bQ$ be a primitive quadrilateral in a regular polygon $\bP_N$.
    Let $F_i$ be the set of all diagonals incident to a fixed vertex $i$ of $\bP_N$.
    Then
    \[
    \#(\bQ(0)\cap F_i)=\#(\bQ(1)\cap F_i).
    \]
\end{lemma}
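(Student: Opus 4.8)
The plan is to reduce the statement to a single elementary property of convex quadrilaterals: both the crossing diagonals $\bQ(0)$ and any pair of opposing edges split the four vertices of $\bQ$ into two disjoint pairs. First I would fix a counterclockwise labeling $a,b,c,d$ of the vertices of $\bQ$, so that $\bQ(0)=\{[a,c],[b,d]\}$ and the two pairs of opposing edges are $\{[a,b],[c,d]\}$ and $\{[b,c],[d,a]\}$. Since $\bQ$ is primitive, one of these two pairs consists of two edges of $\bP_N$, and $\bQ(1)$ is by definition the other pair.

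The engine of the proof is the observation that the two segments comprising $\bQ(0)$ have no common endpoint and together have endpoint set exactly $\{a,b,c,d\}$; the same holds for the two segments of $\bQ(1)$, since opposing edges of a quadrilateral are vertex-disjoint and jointly touch all four vertices. Thus each of the four vertices $a,b,c,d$ is an endpoint of precisely one segment of $\bQ(0)$ and of precisely one segment of $\bQ(1)$.

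With this in hand, the conclusion follows by a short case split on the position of the fixed vertex $i$. If $i\notin\{a,b,c,d\}$, then no segment of $\bQ(0)$ or of $\bQ(1)$ is incident to $i$, so $\#(\bQ(0)\cap F_i)=0=\#(\bQ(1)\cap F_i)$. If $i$ is one of $a,b,c,d$, then by the previous paragraph exactly one segment of $\bQ(0)$ and exactly one segment of $\bQ(1)$ is incident to $i$; as every segment incident to $i$ lies in $F_i$, both counts equal $1$. In either case $\#(\bQ(0)\cap F_i)=\#(\bQ(1)\cap F_i)$.

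The only point demanding care — and the closest thing to an obstacle — is the bookkeeping around $F_i$: a primitive quadrilateral can have a segment of $\bQ(1)$ that is an \emph{edge} of $\bP_N$ rather than a diagonal (for instance the quadrilateral on four consecutive vertices), so $F_i$ must be read as the set of \emph{all} edges and diagonals of $\bP_N$ incident to $i$, which is precisely the support of $E_i$ used in the lineality-space computation. Once this is fixed the argument is immediate; it is worth noting that primitivity is used only to render $\bQ(1)$ well-defined, the vertex-counting identity itself holding for any convex quadrilateral and any choice of opposing-edge pair.
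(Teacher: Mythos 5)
Your proof is correct, and structurally it follows the same route as the paper: split into the case where $i$ is not a vertex of $\bQ$ (both counts vanish) and the case where $i$ is a vertex of $\bQ$ (both counts equal $1$). The difference lies in how the second case is settled: the paper handles it ``by inspection'' of a figure showing the unique arrangement of $\bQ$ relative to $F_i$ up to symmetry, whereas you give a figure-free combinatorial argument --- the two diagonals of $\bQ$ are vertex-disjoint with endpoint set all four vertices of $\bQ$, and the same is true of either pair of opposing edges, so each vertex of $\bQ$ meets exactly one segment of $\bQ(0)$ and exactly one segment of $\bQ(1)$. This buys two things: a self-contained proof, and the explicit observation that primitivity plays no role in the counting identity (it only makes $\bQ(1)$ well-defined), which the paper's proof does not articulate. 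Your bookkeeping remark about $F_i$ is also a genuine catch rather than pedantry: with $F_i$ read literally as consisting only of diagonals, the lemma is false --- take $\bQ$ on four consecutive vertices $1,2,3,4$ and $i=2$; then $\bQ(0)\cap F_2=\{[2,4]\}$ has one element while $\bQ(1)=\{[2,3],[1,4]\}$ misses $F_2$ entirely, since $[2,3]$ is an edge of $\bP_N$. The inclusive reading of $F_i$ as all edges and diagonals incident to $i$ (i.e., the support of $E_i$) is the one the paper implicitly uses in the proof of Lemma \ref{Claim_512}, and it is the reading under which both your argument and the paper's figure are valid.
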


\begin{proof}
    If $i$ is not a vertex of $\bQ$, the statement is trivially true.
    If $i$ is a vertex of $\bQ$, there is only one possible arrangement of $\bQ$ with respect to $F_i$ up to symmetry, as shown in Figure \ref{Figure_Fix_Vertex_LinSp}. 
    Lines in $F_i$ are denoted by black solid lines, elements of $\bQ(0)$ by solid gray lines, and elements of $\bQ(1)$ by dashed black lines. 
    The claim follows by inspection.
\end{proof}

    \begin{lemma}\label{Claim_512}
	    Let $\mcA$ be the cluster algebra of type $A_n$, $B_n$, or $C_n$ with $n\geq 2$ or $D_n$ with $n\geq 4$ and with frozen variables as in \S\ref{sec:comb}. 	    Let $\bQ$ (and $\overline\bQ$) be the quadrilateral(s) of a primitive exchange relation $P$. Then one of the following holds:
        \begin{enumerate}[label=(\arabic*)]
            \item \label{Claim515_Expression1}
            $E_i\cdot d_P = \#(\bQ(0)\cap F_i)-\#(\bQ(1)\cap F_i)$; or
            \item \label{Claim515_Expression2}
            $E_i\cdot d_P = \#(\bQ(0)\cap F_i)-\#(\bQ(1)\cap F_i)+ \#(\overline\bQ(0)\cap F_i)-\#(\overline\bQ(1)\cap F_i)$; or
            \item \label{Claim515_Expression3}
		    $E_i\cdot d_P = \#(\bQ(0)\cap F_i)-\#(\bQ(1)\cap F_i)+ \#(\bQ(0)\cap F_{\overline{i}})-\#(\bQ(1)\cap F_{\overline{i}})$.
	    \end{enumerate}
    \end{lemma}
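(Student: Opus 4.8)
The plan is to compute $E_i\cdot d_P$ directly from the combinatorial model and identify it with one of the three listed expressions according to the exchange type of $P$. Since a type 4 exchange is never primitive (\S\ref{sec:quadrilateral}), only primitive exchanges of types 0, 1, 2, and 3 need to be treated. Writing $P$ as $xx'=y_1+y_2$ with $y_1$ the primitive term, I recall that $d_P=\deg(xx')-\deg(y_1)$, that the variables in $xx'$ correspond exactly to the diagonals $\bQ(0)$ (and $\overline\bQ(0)$), and that the variables in $y_1$ correspond exactly to the sides $\bQ(1)$ (and $\overline\bQ(1)$). Because $E_i$ is a sum of standard basis vectors indexed by the edges, diagonals, and (colored) diameters incident to $i$, together with the extra diameter term in type $C_n$, the quantity $E_i\cdot d_P$ is a signed count of the sides of the exchange quadrilateral(s) incident to $i$, each weighted by its multiplicity in $P$; here I count a side of $\bQ$ whether it is a diagonal or an edge of $\bP_N$, which is exactly what records the edges of $\bP_N$ occurring in $y_1$. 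The whole argument is thus a matter of reading off these multiplicities from Propositions \ref{PropModelAn}, \ref{PropModelBCn}, and \ref{PropModelDn} while tracking the central symmetry.

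First I would dispatch the two ``generic'' types. In type $A_n$ every exchange is of type 0: each cluster variable is a single diagonal appearing with multiplicity one, $E_i$ weights each incident side by one, and so $E_i\cdot\deg(xx')=\#(\bQ(0)\cap F_i)$ and $E_i\cdot\deg(y_1)=\#(\bQ(1)\cap F_i)$, yielding expression \ref{Claim515_Expression1}. For a type 1 exchange (in $B_n$, $C_n$, or $D_n$) the quadrilaterals $\bQ$ and $\overline\bQ$ are disjoint and the variables are genuine diagonal pairs $L=\{l,\overline l\}$, each contributing a single basis vector $e_L$ to both $E_i$ and $d_P$. The key point is that $e_L$ occurs in $E_i$ precisely when $i$ is an endpoint of $l$ or of $\overline l$, and since $l$ is not a diameter at most one of these holds; the unique representative incident to $i$ therefore lies in exactly one of $\bQ,\overline\bQ$, and summing the contributions of the two quadrilaterals produces expression \ref{Claim515_Expression2}.

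The delicate cases are types 2 and 3, where a diameter enters. In a type 3 exchange one has $\bQ=\overline\bQ$ and the exchanged variables are diameters. For $B_n$ and $D_n$ a diameter carries coefficient one in $E_i$ (in $D_n$ the two colors $[i,\overline i]$ and $\widehat{[i,\overline i]}$ split this role rather than doubling it), and the resulting count is expression \ref{Claim515_Expression1}. For $C_n$, however, the diameter $[i,\overline i]$ carries coefficient two (from the explicit $e_{[i,\overline i]}$ term together with the diameter sum) and the right-hand exponent in \eqref{Equation_Model_Bn_3} is $r=2$; this doubling is matched precisely by recording incidences at both $i$ and its antipode $\overline i$, which is expression \ref{Claim515_Expression3}. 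The same phenomenon drives the type 2 case: the diameter shared by $\bQ$ and $\overline\bQ$ appears with exponent $2/r$ in \eqref{Equation_Model_Bn_2} (respectively as the product $x_{[a,\overline a]}x_{\widehat{[a,\overline a]}}$ of colored diameters in \eqref{Equation_Model_Dn_2}), and keeping the single quadrilateral $\bQ$ while counting its incidences at both $i$ and $\overline i$ again yields expression \ref{Claim515_Expression3}.

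The hard part is exactly this diameter bookkeeping: reconciling the single basis vector $e_L$ carried by each pair (and the doubled diameter coefficient in type $C_n$) with incidence counts that are naturally spread across the two quadrilaterals $\bQ,\overline\bQ$ or across the two antipodal vertices $i,\overline i$, all while simultaneously respecting the type-dependent multiplicities $r$, $2/r$ and the colored-diameter contributions in types $C_n$ and $D_n$, so that each case lands on exactly one of the three stated formulas. Once these multiplicities are matched correctly, each expression is a clean signed incidence count of a primitive quadrilateral, of precisely the form to which Lemma \ref{Lemma_FixingVertex_is_in_LinealitySpace} will apply (in the proof of Proposition \ref{Proposition_ABCn_LinSp}) to conclude that $E_i\cdot d_P=0$.
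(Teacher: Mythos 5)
Your proposal is correct and follows essentially the same route as the paper's proof: a direct case-by-case computation of $E_i\cdot d_P$ from the combinatorial models, matching the diameter multiplicities (the exponents $r$ and $2/r$, the colored diameters in type $D_n$, and the doubled coefficient of $e_{[i,\overline i]}$ in type $C_n$) against the three signed incidence counts. The only differences are organizational --- you sort cases by exchange type rather than by Dynkin type, and you assign expression (2) to type~1 exchanges in $C_n$ and expression (3) to type~2 exchanges in $B_n$ and $D_n$ where the paper makes the opposite choices --- but this is immaterial, since expressions (2) and (3) coincide numerically whenever $\bQ\neq\overline\bQ$, as central symmetry gives $\#(\overline\bQ(j)\cap F_i)=\#(\bQ(j)\cap F_{\overline{i}})$.
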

    \begin{proof}
    We proceed to prove the claim splitting into types.
    In the figures we reference below, elements of $\bQ(0)$ are indicated by gray solid lines and elements of $\bQ(1)$ by dashed black lines.

    \subsubsection*{Type $A_n$}
    For type $A_n$, the generator $E_i$ has entries $1$ at $F_i$ and $0$ otherwise. 
        It follows directly from the definition of the degree $d_P$ that \ref{Claim515_Expression1} of the claim is true.
        
    \subsubsection*{Type $B_n$}
        For type $B_n$, the generator ${E}_i$ has entries 1 at all diagonal pairs incident at $i$ or $\overline{i}$, where the entry at the diameter $[i,\overline{i}]$ is also $1$.
        By symmetry, for any diagonal pair $L=\{l,\overline{l}\}$, $(E_i)_L=1$ if and only if $l\in F_i$ or $\overline{l}\in F_i$.    
        Hence, the dot product $E_i\cdot d_P$ is equal to (the number of  
        $l\in \bQ(0)$ such that $l$ or $\overline{l}$ appears in  $F_i$)$-$(the number of $l\in \bQ(1)$ such that $l$ or $\overline{l}$ appears in  $F_i$), where each appearance is weighted by the power of the corresponding variable in $P$.
        
        If $[i,\overline{i}]$ is not in $\bQ$, all variables in $P$ that have nonzero entries in $E_i$ have multiplicity $1$.
        Moreover, for $l\neq [i,\overline{i}]$, if $l\in F_i$, then $\overline{l}\not\in F_i$.
        Therefore, the above interpretation of $E_i\cdot d_P$ can be calculated as a sum over the two quadrilaterals $\bQ$ and $\overline{\bQ}$, which implies \ref{Claim515_Expression2} of the claim.

        If $[i,\overline{i}]\in \bQ(0)$, $P$ must be a type 3 exchange, so $\bQ=\overline{\bQ}$ and all variables appearing in $P$ have degree 1.
        Therefore, \ref{Claim515_Expression1} of the claim is true.
        
        If $[i,\overline{i}]\in \bQ(1)$, $P$ must be a type 2 exchange, and $x_{[i,\overline{i}]}$ is the variable with exponent $2$ in $P$.
        In this case, the diameter $[i,\overline{i}]$ is where the two primitive quadrilaterals $\bQ$ and $\overline{\bQ}$ overlap, so it is in both $\bQ(1)$ and $\overline{\bQ(1)}$.
        This accounts for the degree $2$ in the exchange relation $P$.
	See Figure \ref{fig:5151}. 
	Hence, \ref{Claim515_Expression2} of the claim is true.
	
	\begin{figure}
	\begin{center}
		\includegraphics[height=4cm]{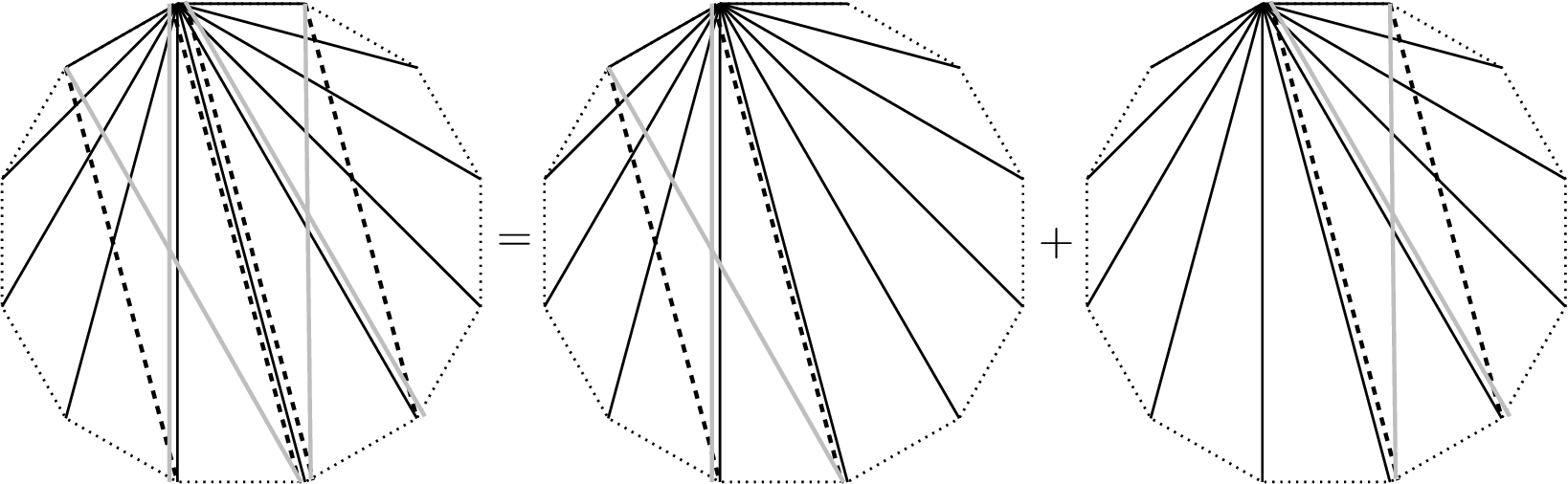}
		\caption{Type 2 exchange in type $B_n$ (Lemma \ref{Claim_512})}
		\label{fig:5151}
        \end{center}
\end{figure} 
        
    \subsubsection*{Type $C_n$}
        For type $C_n$, the generator ${E}_i$ has entries 1 at all diagonal pairs incident at $i$ or $\overline{i}$, except for the entry at the diameter $[i,\overline{i}]$, which is $2$.

        The entry corresponding to a set of diagonals $L$ in $E_i$ can be interpreted as the number of appearances of the vertices $i$ and $\overline{i}$ in each diagonal of $L$.
        With this interpretation, we only need to consider one diagonal in each diagonal pair.
        Hence, in the following calculations, we only consider one diagonal representative for each variable appearing in $P$.
        
        The dot product $E_i\cdot d_P$ is equal to (the number of appearances of $i$ and $\overline{i}$ in the elements of $\bQ(0)$)$-$(the same for $\bQ(1)$), where each appearance is weighted by the power of the corresponding variable in $P$.

        If $[i,\overline{i}]\not\in\bQ(0)$, all variables in $P$ that have nonzero entries in $E_i$ have multiplicity $1$ in $P$. Therefore, using the above interpretation of $E_i\cdot d$ and breaking down the sum by $i$ and $\overline{i}$, \ref{Claim515_Expression3} of the claim is true.
	See Figure \ref{fig:5152}.

        If $[i,\overline{i}]\in \bQ(0)$, $P$ must be a type 3 exchange, so $\bQ=\overline{\bQ}$ and the variable corresponding to the pair of diagonals in $\bQ(1)$ has entry $-2$ in $d_P$.
	The two diagonals in $\bQ(1)$ are incident to $i$ and $\overline{i}$ respectively, which is equivalent to counting one diagonal in the pair with multiplicity $2$. See Figure \ref{fig:5153}.
	Hence, \ref{Claim515_Expression3} of the claim is true.

	\subsubsection*{Type $D_n$}
        If $P$ is a type 1 or 3 exchange, the primitive exchange relation is of the same form as in the exchanges of these types in a cluster algebra of type $B_{n-1}$, and we reduce to the previously discussed case.
	If $P$ is a type 2 exchange, the primitive term in the exchange relation corresponds to two diameters, $l$ and $\widehat{l}$, that are the same line but of different colors, and their corresponding variables each have degree $1$ in $P$. 
        If the entry of $l$ is $1$ in ${E}_i$, the entry of $\widehat{l}$ is also $1$.
        Hence, the computation of ${E}_i\cdot d_P$ is equal to that in a type $B_{n-1}$ type 2 exchange, where the diameter $l$ has degree $2$ in the exchange relation but an entry $1$ in ${E}_i$.
    \end{proof}
	
	\begin{figure}
        \begin{center}
        \includegraphics[height=4cm]{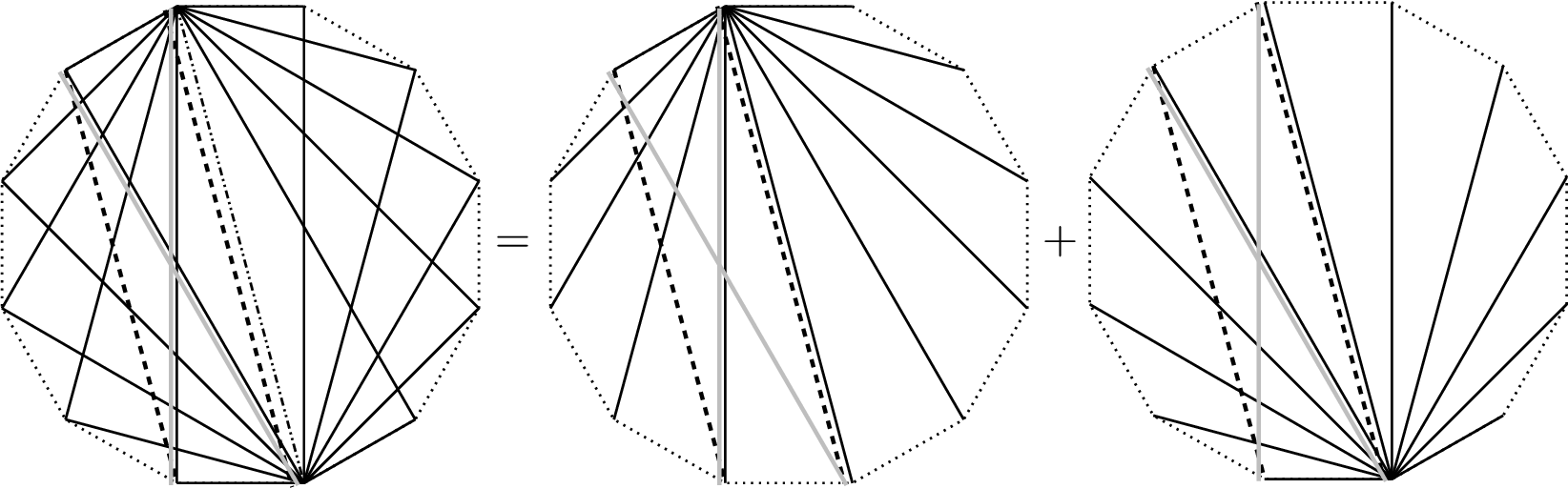}
	\caption{Type 1 or 2 exchange in type $C_n$ (Lemma \ref{Claim_512})}\label{fig:5152}
        \end{center}
\end{figure}
	\begin{figure}
	\begin{center}
        \includegraphics[height=4cm]{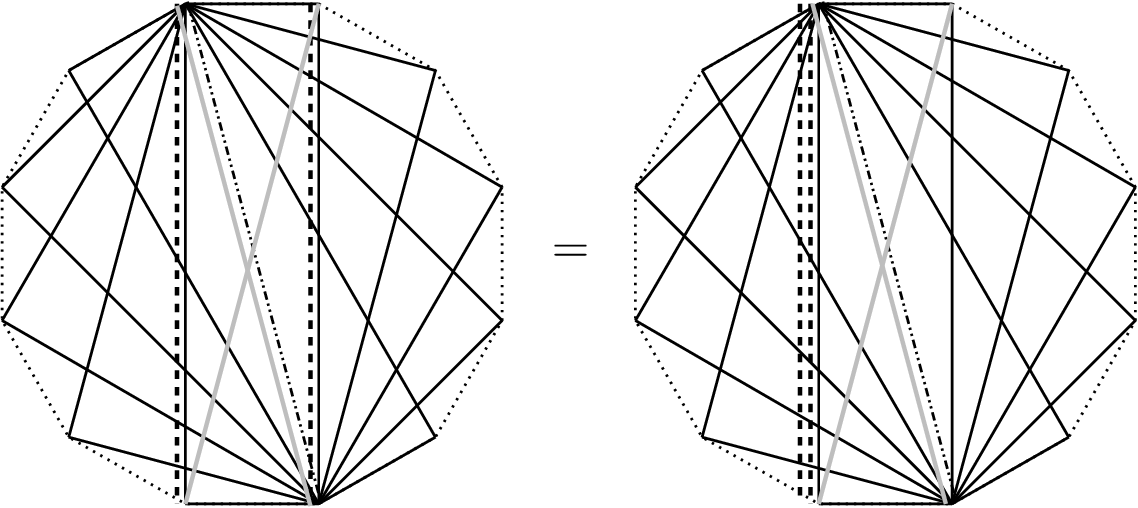}
	\caption{Type 3 exchange in type $C_n$ (Lemma \ref{Claim_512})}\label{fig:5153}
        \end{center}
\end{figure}

We now can complete the proof of Proposition \ref{Proposition_ABCn_LinSp}.

\begin{proof}[Proof of Proposition \ref{Proposition_ABCn_LinSp}]
    By Lemma \ref{Claim_512} and \ref{Lemma_FixingVertex_is_in_LinealitySpace}, $E_i\cdot d_P=0$ for any $i$ and any primitive exchange relation $P$.
    By \ref{Lemma_Of_The_Whole_Thesis}, this implies that $K(\Gamma)$ is contained in the lineality space of $\mcC_\mcA$.
   
    Note that $\#K$ is the number of frozen variables in $\mcA$.  By Lemma \ref{Lemma_Nullity_q}, this is exactly the dimension of the lineality space, so it remains to show that $K$ is a linearly independent set.

    Suppose $\sum_{i=1}^{q}a_i E_i=0$ for some $a_i\in \mathbb{R}$.
    Note that for any $i\neq j$, for any $E_k\in K$, the entry $(E_k)_{[i,j]}\neq 0$ if and only if $k=i$ or $k=j$,
    and $(E_i)_{[i,j]}=(E_j)_{[i,j]}$.
    Therefore, we have 
    \[
    a_i+a_j=0
    \]
    and so for any distinct $i,j,k$,
    \[
    a_i=-a_j=a_k=-a_i.
    \]
    Hence $a_i=0$ for any $i=1,\ldots, q$, so the set $K$ is linearly independent.
    It follows that $K$ is a basis of the lineality space.
\end{proof}

Likewise, we prove Proposition \ref{Proposition_Result3_Dn_Linsp}:

\begin{proof} [Proof of Proposition \ref{Proposition_Result3_Dn_Linsp}]
Let $P$ be an arbitrary primitive exchange relation.
 By Lemma \ref{Claim_512} and \ref{Lemma_FixingVertex_is_in_LinealitySpace}, $E_i\cdot d_P=0$ for any $i$.
Moreover, we claim that $u^\diam\cdot d_P=0$. By \ref{Lemma_Of_The_Whole_Thesis}, this will implies that $K(D_n)$ is contained in the lineality space of $\mcC_\mcA$.

We show that $u^{\mathrm{diam}}\cdot d_P=0$.
    It suffices to show that the number of variables corresponding to the red diameters and to the blue diameters appearing in the same term of $P$ is always the same. For type 1 exchanges, no diameters are involved. For type 2 exchanges, the only diameters appear in the primitive term, which contains two diameters of the same endpoints but different colors. For type 3 exchanges, the diameters are the two variables being exchanged, and they have different colors.
    In any case, $u^{\mathrm{diam}}\cdot d_P=0$.

We have now seen that the elements of $K(D_n)$ all belong to the lineality space. By Lemma \ref{Lemma_Nullity_q}, the dimension of the lineality space is one more than the number of frozen variables, which is also equal to the size of $K$. Hence, it suffices to show that $K(D_n)$ is a linearly independent set.
    Suppose 
    \[
    \sum_{i=1}^{n}a_i{E}_i+bu^{\mathrm{diam}}=0.
    \]
    Considering the entry indexed by the blue diagonal $[i,\overline{i}]$, we have
    \[
    a_i+b=0.
    \]
    Considering the entry indexed by the red diagonal $\widehat{[i,\overline{i}]}$, we have
    \[
    a_i-b=0.
    \]
    Since $i$ is arbitrary, $a_i=b=0$ for any $i$.
    Therefore, $K(D_n)$ is a basis of the lineality space.
\end{proof}

\subsection{Proofs of Theorems \ref{Theorem_Statement_Result3_ABCn} and \ref{Theorem_Statement_Result3_Dn}}\label{sec:proofs2}
We begin with a combinatorial lemma:
\begin{lemma}\label{Lemma_PrimQuadInCompleteGraph}
    Let $\bQ$ be a primitive quadrilateral in a regular polygon $\bP_N$. Consider an arc of $\bP_N$ with endpoints $i,j$, and let $F$ be the set of all diagonals and edges in $\bP_N$ whose endpoints lie on the arc.
    Then as long as $(\bQ(0)\cup \bQ(1))\cap F \neq\{[i,j]\}$, 
    \[
    \#(\bQ(0)\cap F)=\#(\bQ(1)\cap F).
    \]
\end{lemma}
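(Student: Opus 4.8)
The plan is to choose explicit coordinates for the primitive quadrilateral and reduce the statement to a transparent algebraic identity. Since $\bQ$ is primitive and $N\geq 5$ in all the cases at hand, exactly one pair of its opposite sides consists of edges of $\bP_N$; writing the vertices in counterclockwise order I would label them $a,a+1,c,c+1$ so that $[a,a+1]$ and $[c,c+1]$ are these two boundary edges. The diagonals are then $\bQ(0)=\{[a,c],[a+1,c+1]\}$ and the remaining pair of opposite sides is $\bQ(1)=\{[a+1,c],[a,c+1]\}$. The key structural observation is that all four of these segments join the left pair of vertices $\{a,a+1\}$ to the right pair $\{c,c+1\}$.

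Next I would translate membership in $F$ into membership of endpoints in the arc. Let $V$ be the contiguous set of vertices lying on the arc from $i$ to $j$; a segment lies in $F$ exactly when both of its endpoints lie in $V$. Introduce indicators $\alpha_0,\alpha_1,\gamma_0,\gamma_1\in\{0,1\}$ recording whether $a,a+1,c,c+1$ respectively lie in $V$. Counting the segments of $\bQ(0)$ and $\bQ(1)$ that land in $F$ then gives
\[
\#(\bQ(0)\cap F)=\alpha_0\gamma_0+\alpha_1\gamma_1,\qquad \#(\bQ(1)\cap F)=\alpha_1\gamma_0+\alpha_0\gamma_1,
\]
so that
\[
\#(\bQ(0)\cap F)-\#(\bQ(1)\cap F)=(\alpha_0-\alpha_1)(\gamma_0-\gamma_1).
\]
This reduces the lemma to showing that the right-hand side vanishes unless we are in the excluded configuration.

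The final step is a short combinatorial case analysis of when this product is nonzero, namely when exactly one of $\{a,a+1\}$ and exactly one of $\{c,c+1\}$ lies in $V$. Because $V$ is a contiguous arc and both $\{a,a+1\}$ and $\{c,c+1\}$ are pairs of consecutive vertices, having exactly one member in $V$ forces the pair to straddle an endpoint of the arc, i.e.\ to equal $\{i-1,i\}$ or $\{j,j+1\}$; since $a\neq c$, the two pairs must straddle distinct endpoints. This leaves only the two symmetric configurations $\{a=i-1,\ c=j\}$ and $\{a=j,\ c=i-1\}$, and in each I would verify by direct inspection that the unique segment among $\bQ(0)\cup\bQ(1)$ with both endpoints in $V$ is $[i,j]$ itself (lying in $\bQ(1)$), so that $(\bQ(0)\cup\bQ(1))\cap F=\{[i,j]\}$ — precisely the hypothesis we have excluded. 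Hence under the assumption of the lemma the difference vanishes. I expect the only delicate point to be this last bookkeeping, matching the two straddling configurations exactly with the forbidden singleton case; once the identity above is in place, everything else is routine.
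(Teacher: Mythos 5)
Your proof is correct, but it takes a genuinely different route from the paper's. The paper argues by cases on the number of vertices of $\bQ$ lying on the arc: with zero or one vertex both counts vanish; the hypothesis rules out (or trivializes) the two-vertex case; and the three- and four-vertex cases are settled by inspecting figures. You instead factor the difference of the counts algebraically: labeling the vertices of $\bQ$ as $a,a+1,c,c+1$ and introducing indicators $\alpha_0,\alpha_1,\gamma_0,\gamma_1$ for membership of $a,a+1,c,c+1$ in the arc, you obtain $\#(\bQ(0)\cap F)-\#(\bQ(1)\cap F)=(\alpha_0-\alpha_1)(\gamma_0-\gamma_1)$, and then observe that this product is nonzero only when $\{a,a+1\}$ and $\{c,c+1\}$ each straddle one of the two (necessarily distinct, since $a\neq c$) endpoints of the arc --- configurations in which $(\bQ(0)\cup\bQ(1))\cap F=\{[i,j]\}$ with $[i,j]\in\bQ(1)$, which is precisely what the hypothesis excludes. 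What your approach buys: it is uniform (no figures and no enumeration by vertex count), it handles degenerate arcs (e.g.\ an arc containing all vertices) automatically, and it makes transparent that the excluded singleton $\{[i,j]\}$ is the \emph{only} possible source of imbalance between $\bQ(0)$ and $\bQ(1)$. What the paper's approach buys: it is shorter to state and matches the inspection-based style of the neighboring lemmas such as Lemma \ref{Lemma_CombFact_QuadCrossing} and Lemma \ref{Lemma_FixingVertex_is_in_LinealitySpace}, though its treatment of the two-vertex case is terser than your explicit verification. One point worth retaining in your write-up: your opening reduction, that exactly one pair of opposite sides of $\bQ$ consists of polygon edges, uses $N\geq 5$, and this indeed holds wherever the paper invokes the lemma, since $n\geq 2$ is assumed from \S\ref{sec:proofs1} onward.
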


\begin{proof}
  If zero or one of the vertices of $\bQ$ lie on the arc, the claim is clearly true, since no diagonals or edges of $\bQ$ can be in $F$.
  Since we suppose $(\bQ(0)\cup \bQ(1))\cap  F \neq\{[i,j]\}$, it is impossible to have exactly two vertices of $\bQ$ on the arc, unless $[i,j]$ is an edge, and doesn't belong to $\bQ(0)$ or $\bQ(1)$, in which case the claim is again clear.
    Hence, we may assume either 3 or 4 vertices of $\bQ$ are on the arc.

    In Figure \ref{Figure_LemmaQuad_Complete_Graph}, the cases where $3$ or $4$ vertices are on the arc are shown.
    Lines in $F$ are denoted by black solid lines, elements of $\bQ(0)$ by solid gray lines, and elements of $\bQ(1)$ by dashed black lines. The claim follows by inspection.
\end{proof}

    \begin{figure}
        \centering    
        \includegraphics[height=4cm]{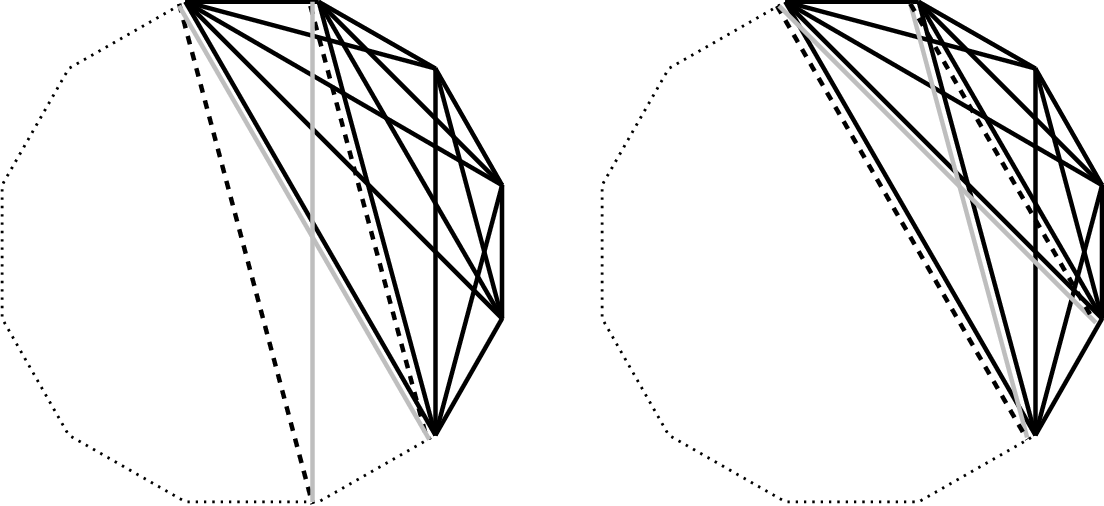}   
        \captionsetup{justification=centering}
	\caption{Cases in Lemma \ref{Lemma_PrimQuadInCompleteGraph}}
        \label{Figure_LemmaQuad_Complete_Graph}
    \end{figure}

    Let $P$ be a primitive exchange relation in one of the cluster algebras considered in 
Theorems \ref{Theorem_Statement_Result3_ABCn} and \ref{Theorem_Statement_Result3_Dn} with exchange quadrilateral(s) $\bQ$ (and $\overline\bQ$).
Define $L_P$ to be the set of the \emph{shortest} diagonals or edges in $\bP_N$ corresponding to variables in the primitive term of $P$.
Equivalently, $L_P$ is the set of shortest elements in $\bQ(1)$ (and $\overline{\bQ(1)}$). In type $A_n$, $L_P$ consists of a single element unless $n$ is odd and $\bQ=\overline \bQ$. In other types, $L_P$ always consists of two elements. The edges or diagonals in $L_P$ are always of non-maximal length by construction.

    For any non-maximal length diagonal or edge $l$ in $\bP_N$, the set of diagonals that share one endpoint with $l$ and have length one larger than $l$ contains exactly two elements. Moreover, they are $\bQ(0)$ for some primitive exchange quadrilateral $\bQ$.
    It directly follows that $l$ is (one of) the shortest diagonals or edges in $\bQ(1)$.
    Hence, except in type $D_n$, there exists a unique primitive exchange relation $P$ such that $l\in L_P$.
In type $D_n$, this is true as well except for when $l$ has length $n-1$, in which case there are two primitive exchange relations of type 3 with $l\in L_P$; they differ by swapping the colors on the diagonals.

\begin{lemma}\label{lemma:newdp}
Let $P$ be a primitive exchange relation and $l$ be any non-maximal length diagonal or edge.
Then 
\[
    \begin{cases}
    v(l)\cdot d_P >0 & \text{ if } l\in L_P,\\
    v(l)\cdot d_P =0 & \text{ otherwise.}
    \end{cases}
    \]
    \end{lemma}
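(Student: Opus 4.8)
The plan is to reduce $v(l)\cdot d_P$ to a signed membership count of $F_l$ against the edges and diagonals of the exchange quadrilateral(s) of $P$, and then to invoke the combinatorial Lemma \ref{Lemma_PrimQuadInCompleteGraph} to force that count to vanish outside a single degenerate configuration, which I will identify with the condition $l\in L_P$.

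First I would unwind the definitions. Writing $d_P=\deg(xx')-\deg(y_1)$ and $v(l)=-\sum_{k\in F_l}e_{\{k,\overline k\}}$ (reading $e_{\{k,\overline k\}}=e_k$ in type $A_n$), we get $v(l)\cdot d_P=-\sum_{k\in F_l}(d_P)_{\{k,\overline k\}}$, that is, minus the total coefficient in $d_P$ of the pair-variables indexed by diagonals or edges lying on the minor arc of $l$. Since the diagonals $\bQ(0)$ (and $\overline{\bQ}(0)$) record the exchanged monomial $xx'$ while the edges $\bQ(1)$ (and $\overline{\bQ}(1)$) record the primitive term $y_1$, this coefficient sum is a signed membership count of $F_l$ against $\bQ(0)\cup\bQ(1)$ (and against $\overline{\bQ}(0)\cup\overline{\bQ}(1)$). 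In type $A_n$ all multiplicities are $1$ and there is a single quadrilateral, so directly $v(l)\cdot d_P=\#(\bQ(1)\cap F_l)-\#(\bQ(0)\cap F_l)$. For types $B_n,C_n,D_n$ I would absorb the multiplicities (the exponent $r$, the doubled diameter in type $2$ and type $3$ exchanges, and the two colored diameters in $D_n$) into the same signed difference, following the multiplicity analysis already carried out in Lemmas \ref{Claim_512} and \ref{lemma:weighteddegree}; this expresses $v(l)\cdot d_P$, up to a positive factor, as a sum over $\bR\in\{\bQ,\overline{\bQ}\}$ of $\#(\bR(1)\cap F)-\#(\bR(0)\cap F)$, where $F$ is the set attached to the minor arc of $l$ for $\bQ$ and of $\overline l$ for $\overline{\bQ}$.

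Next I would apply Lemma \ref{Lemma_PrimQuadInCompleteGraph} to each quadrilateral with the chord $[i,j]$ taken to be $l$ (resp. $\overline l$) itself. The lemma gives $\#(\bR(0)\cap F)=\#(\bR(1)\cap F)$, so the corresponding summand vanishes, as long as $(\bR(0)\cup\bR(1))\cap F\neq\{l\}$. Hence $v(l)\cdot d_P$ can be nonzero only in the degenerate configuration where this intersection is exactly $\{l\}$, which in particular forces $l$ to be an edge or diagonal of $\bR$.

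The crux, and the step I expect to be the main obstacle, is to show that this degenerate configuration occurs precisely when $l\in L_P$, while disposing of the case that $l$ is a diagonal of the quadrilateral. If $l\in\bQ(0)$, then the minor arc of $l$ necessarily contains one of the two remaining quadrilateral vertices, so an edge of $\bQ(1)$ also lies in $F_l$ and the intersection strictly contains $\{l\}$; thus this case contributes $0$. If $l\in\bQ(1)$ is the \emph{shorter} of the two opposite edges, then its minor arc is the short side of the quadrilateral and meets no other edge or diagonal, giving intersection exactly $\{l\}$; this is the case $l\in L_P$. The delicate point is the longer edge of $\bQ(1)$: an arc-length computation shows its minor arc contains the remaining vertices—so the intersection again strictly exceeds $\{l\}$—except in one borderline configuration, and in that borderline case the longer edge has length exactly $\lfloor N/2\rfloor$, i.e.\ maximal length, which is excluded by the hypothesis that $l$ is of non-maximal length. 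This is exactly where the non-maximal-length assumption is used. Finally, in the surviving case $l\in L_P$ I would read off the value: with $\#(\bQ(1)\cap F_l)=1$, $\#(\bQ(0)\cap F_l)=0$, and the relevant multiplicity positive, one gets $v(l)\cdot d_P>0$. For $D_n$ I would additionally note that type $4$ exchanges are never primitive and that a non-maximal $l$ cannot be a diameter, so the diameter subtleties do not interfere with the argument.
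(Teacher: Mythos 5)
Your proposal is correct and follows essentially the same route as the paper's proof: both reduce $v(l)\cdot d_P$ to the signed counts $\#(\bR(0)\cap F_l)-\#(\bR(1)\cap F_l)$ over the exchange quadrilateral(s), dispose of multiplicity issues by noting that the exponent-$2$ variables are diameters or the pair $L_P$ itself, invoke Lemma \ref{Lemma_PrimQuadInCompleteGraph} to obtain vanishing when $l\notin L_P$, and handle $l\in L_P$ by observing that the unique common support of $v(l)$ and $d_P$ is at $L_P$ with both entries negative. If anything, your explicit verification of the hypothesis of Lemma \ref{Lemma_PrimQuadInCompleteGraph} --- in particular ruling out the degenerate case for the longer element of $\bQ(1)$, where the borderline configuration forces maximal length and is thus excluded by hypothesis --- is spelled out more carefully than in the paper, which leaves that check implicit.
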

    \begin{proof}
Let $\bQ$ (and $\overline{\bQ}$) be the exchange quadrilateral(s) of $P$. 
    We first show that $v(l)\cdot d_{P}>0$ if $l\in L_P$.
    Suppose $l\in L_P$.
    By definition, $L_P$ is the set of shortest diagonal(s) or edge(s) in $\bQ(1)$ (and $\overline{\bQ}(1)$).
    By definition of $v(l)$, $L_P$ contains the longest diagonal(s) or edge(s) that have nonzero entries in $v(l)$.
    Hence, the only common nonzero entry of $v(l)$ and $d_P$ is in $L_P$, and its entry is negative in both $v(l)$ and $d_{P}$.
    Therefore, $v(l)\cdot d_{P}>0$.

    Next, we show that $v(l)\cdot d_{P}=0$ for $l\not\in L_P$. 
    By definition, elements of $L_P$ are of non-maximal lengths. 
    By definition of $v(l)$, none of the nonzero entries in $v(l)$ is indexed by a diameter.
    Moreover, if $L$ is a longest non-diameter pair in type $B_n$, $C_n$, or $D_n$, $L$ has a nonzero entry in $v(l)$ if and only if $l\in L$.
    By the assumption $l\not\in L_P$, such $L$ must not be $L_{P}$, so the variable corresponding to $L$ cannot not appear in $P$ with exponent $2$.
    Therefore, in type $B_n$ or $C_n$, the different multiplicities of diameters and diagonals in the different exchange types do not affect the calculation.
    
    We thus  have 
    \[
	    v(l)\cdot d_{P}=\sum_{\bR\in\{\bQ,\overline \bQ\}}\#(\bR(0)\cap F_l)-\#(\bR(1)\cap F_l)
    \]
    where $F_l$ is as in the definition of $v(l)$.
    For $l\not\in L_P$, this vanishes by Lemma \ref{Lemma_PrimQuadInCompleteGraph}.
    \end{proof}

    \begin{proof}[Proof of Theorem \ref{Theorem_Statement_Result3_ABCn}]
	    By Lemma \ref{Lemma_Of_The_Whole_Thesis} it suffices to show that for every primitive exchange relation $P$, there exists an element $g\in\mcG$ such that \begin{equation}\label{eqn:g_P}
		    g\cdot d_P>0\qquad\textrm{and} \qquad g\cdot d_{P'}=0\ \textrm{for all primitive }P'\neq P,
    \end{equation}
    and every $g\in\mcG$ satisfies \eqref{eqn:g_P} for some primitive exchange relation $P$.
    This follows from Lemma \ref{lemma:newdp}, since every non-maximal length diagonal or edge belongs to $L_P$ for exactly one choice of $P$.
\end{proof}

For the proof in type $D_n$, we will need to pay special attention to primitive exchange relations $P$ of type $3$. Let the elements of $\bQ(0)$ be adjacent to vertices $i$ and $i+1$. We call the diagonal adjacent to $i$ the \emph{preceding diagonal}, and the diagonal adjacent to $i+1$ the \emph{succeeding diagonal}; we include the information of the colors of the diagonals as they appear in the relation $P$.

\begin{lemma}\label{lemma:dpnew2}
	Let $1\leq j\leq n$ and $0\leq k \leq n-1$. Let $P$ be any primitive exchange relation in type $D_n$ for $n\geq 4$.
	\begin{enumerate}
		\item
	Then $w(j,k)\cdot d_P=0$ unless $P$ is the type 3 exchange whose succeeding diagonal is $[j,\overline j]$ or whose preceding diagonal is $[j+k,\overline j+k]$. In these two cases, we have $w(j,k)\cdot d_P=1$.

\item	Similarly, $(w(j,k)-\widehat{w}(j+k+1))\cdot d_P=0$ unless $P$ is the type 3 exchange whose succeeding diagonal is $[j,\overline j]$ or ${[j+k+2,\overline j+k+2]}$. In these two cases, $(w(j,k)-\widehat{w}(j+k+1))\cdot d_P$ is non-zero with opposite signs unless $k=n-2$.
\end{enumerate}
\end{lemma}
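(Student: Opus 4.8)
The plan is to reduce the statement to understanding the two atomic pairings $w(i)\cdot d_P$ and $\widehat w(i)\cdot d_P$ as $P$ ranges over the primitive exchange relations, and then to assemble $w(j,k)$ and $w(j,k)-\widehat w(j+k+1)$ by telescoping. Since $w(i)=e_{[i,\overline i]}+v([i+1,\overline i-1])$ and $\widehat w(i)=e_{\widehat{[i,\overline i]}}+v([i+1,\overline i-1])$, each pairing splits into a diameter contribution and a $v$-contribution. The $v$-contribution is governed by Lemma~\ref{lemma:newdp}: as $[i+1,\overline i-1]$ has length $n-2$, it lies in $L_P$ for a unique primitive $P$, which I would identify as the type $2$ relation $P_i$ coming from \eqref{Equation_Model_Dn_2} with $a=i$, $b=i+1$, $c=\overline i-1$, whose primitive term is precisely $x_{[i,\overline i]}x_{\widehat{[i,\overline i]}}x_{[i+1,\overline i-1]}$.

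First I would evaluate $w(i)\cdot d_P$ and $\widehat w(i)\cdot d_P$ case by case. For type $1$ relations no diameter occurs and $[i+1,\overline i-1]\notin L_P$, so both vanish. For the type $2$ relation $P_i$ the two contributions cancel: the $v$-term pairs to $+1$ (computed directly against $d_{P_i}$ via Lemma~\ref{lemma:newdp}), whereas the diameters $[i,\overline i]$ and $\widehat{[i,\overline i]}$ sit in the primitive term with coefficient $-1$, giving $w(i)\cdot d_{P_i}=\widehat w(i)\cdot d_{P_i}=0$; any other type $2$ relation involves neither the diameter at $i$ nor $[i+1,\overline i-1]$ and pairs to $0$. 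The remaining case is a type $3$ relation, where the $v$-term is inactive and the pairing equals the coefficient of the relevant diameter in $d_P$. Writing $E^{+}_v$ (respectively $E^{-}_v$) for the primitive type $3$ relation on the crossing diameters at vertices $\{v,v+1\}$ with $[v,\overline v]$ blue (respectively red), I would record the four resulting Kronecker-delta formulas $w(m)\cdot d_{E^{\pm}_v}$ and $\widehat w(m)\cdot d_{E^{\pm}_v}$, each being the indicator that the blue or red diameter at $m$ is the preceding or succeeding diagonal of the relation.

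With these formulas the rest is bookkeeping. Substituting them into the even and odd definitions of $w(j,k)$, the alternating sums over the $w(j+2i)$ and $\widehat w(j+2i-1)$ telescope. For $k$ even all interior terms cancel, leaving the value $1$ at $E^{+}_{j+k}$ and at $E^{-}_{j-1}$ and $0$ elsewhere. For $k$ odd the telescoping instead leaves boundary indicators at $j-1$ and $j+k$ on the $E^{-}$ side and nothing on the $E^{+}$ side, and the extra term $v([j+k,\overline j+k+1])$---a diagonal of length $n-1$, hence the primitive term of both colorings $E^{\pm}_{j+k}$---cancels the stray $j+k$ contribution on $E^{-}$ while supplying the $E^{+}_{j+k}$ term, again giving $1$ at $E^{+}_{j+k}$ and $E^{-}_{j-1}$. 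In both parities this is exactly part~(1), since $E^{-}_{j-1}$ is the type $3$ relation with succeeding diagonal $[j,\overline j]$ and $E^{+}_{j+k}$ is the one with preceding diagonal $[j+k,\overline j+k]$. For part~(2) I would use that $\widehat w(j+k+1)\cdot d_{E^{+}_v}=w(j,k)\cdot d_{E^{+}_v}$ for every $v$, so all $E^{+}$ contributions cancel in the difference; on the $E^{-}$ side $w(j,k)$ contributes $+1$ at $E^{-}_{j-1}$ while $-\widehat w(j+k+1)$ contributes $-1$ at $E^{-}_{j+k+1}$, yielding $+1$ at $E^{-}_{j-1}$ (succeeding diagonal $[j,\overline j]$) and $-1$ at $E^{-}_{j+k+1}$ (succeeding diagonal $[j+k+2,\overline j+k+2]$).

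The main obstacle I anticipate is the modular bookkeeping for diameters: since $[v,\overline v]=[v+n,\overline{v+n}]$ one has $E^{\pm}_v=E^{\pm}_{v+n}$, so the Kronecker deltas above are really congruences modulo $n$. This is harmless as long as the summation indices span an interval shorter than $n$, but it is precisely what produces the clause ``unless $k=n-2$'' in part~(2): when $k=n-2$ we have $j+k+1=j+n-1\equiv j-1\pmod n$, so $E^{-}_{j+k+1}$ and $E^{-}_{j-1}$ are the same relation and the $+1$ and $-1$ contributions cancel, making the difference vanish identically. I would therefore carry all vertex indices modulo $2n$ and all diameter indices modulo $n$ throughout, and dispose of $k=n-2$ as a final separate case.
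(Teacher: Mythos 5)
Your proposal is correct and follows essentially the same route as the paper: both reduce to the atomic pairings $w(i)\cdot d_P$, $\widehat w(i)\cdot d_P$, and $v([j+k,\overline{j+k}+1])\cdot d_P$ via Lemma \ref{lemma:newdp} and a case analysis on exchange types (type~1 vanishing, cancellation in the type~2 relation with primitive term $x_{[i,\overline i]}x_{\widehat{[i,\overline i]}}x_{[i+1,\overline i-1]}$, Kronecker-delta values on type~3 relations), and then assemble $w(j,k)$ by telescoping. Your write-up is in fact more explicit than the paper's, which leaves the telescoping and the $k=n-2$ coincidence $E^-_{j+k+1}=E^-_{j-1}$ to the reader, and you correctly get the sign $e_{[i,\overline i]}\cdot d_{P_i}=-1$ where the paper's text has an apparent typo.
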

\begin{proof}
	We first claim that for any primitive exchange relation $P$, $w(i)\cdot d_P=0$ unless $P$ is primitive of type $3$ with $[i,\overline i]$ in $\bQ(0)$. Indeed, if the elements of $L_P$ are of length less than $n-2$, this follows from Lemma \ref{lemma:newdp}, as $P$ does not involve any diameters.
	If the elements of $L_P$ have length $n-2$ but do not include $[i+1,\overline i-1]$, then Lemma \ref{lemma:newdp} still gives $v([i+1,\overline i -1])\cdot d_P=0$; since $P$ doesn't involve $[i,\overline i]$ the claim follows. If instead $L_P$ does include $[i+1,\overline i-1]$, then $v([i+1,\overline i -1])\cdot d_P=1$ but $e_{[i,\overline i]}\cdot d_P=1$, and the terms cancel to give zero. Finally, if $P$ is primitive of type $3$ then Lemma \ref{lemma:newdp} still gives $v([i+1,\overline i -1])\cdot d_P=0$, so the claim follows in this case as well.

	Similarly, $\widehat w(i)\cdot d_P=0$ unless $P$ is primitive of type $3$ with $\widehat{[i,\overline i]}$ among its diagonals.
Analogous arguments show that if $P$ is primitive of type $3$ and $[i,\overline i]$ (respectively $\widehat{[i,\overline i]}$) is among its diagonals, $w(i)\cdot d_P=1$ (respectively $\widehat w(i)\cdot d_P=1$).
Finally, we note that Lemma \ref{lemma:newdp} applied to $l=[j+k,\overline j+k+1]$ yields that $v(l)\cdot d_P\neq 0$ if and only if $P$ is the exchange relations with either $[j+k,\overline j+k]$ or $\widehat{[j+k,\overline j+k]}$ as its preceding diagonal, in which case the dot product is $1$.

	The claims of the lemma now follow from the above, and the definition of $w(j,k)$. 
\end{proof}

\begin{proof}[Proof of Theorem \ref{Theorem_Statement_Result3_Dn}]
	Let $\mcG$ be the set of $v(l)$ and $w(j,k)$ in the statement of the theorem.
Since the dimension of the lineality space of $\mcC_\mcA$ is one more than the number of frozen variables by Lemma \ref{Lemma_Nullity_q}, the cone $\mcC_\prim$ has dimension one less than the number $p$ of cluster variables. This means that $\mcG$ contains the generators for $\mcC_\prim$ if for every two primitive exchange relations $P_1,P_2$, one of the following occurs:
\begin{enumerate}
	\item There exists $g\in \mcG$ such that for any primitive exchange relation $P$,
	$g\cdot d_P\geq 0$ with equality if $P\neq P_1,P_2$, and strict inequality for at least one of $P_1,P_2$; or\label{innernormal}
	\item There exists $g'\in \RR^{\mcV\cup\mcW}$ such that for any primitive exchange relation $P\neq P_1,P_2$
		$g'\cdot d_P= 0$, and $g'\cdot P_1,g'\cdot P_2$ are non-zero with opposite signs.\label{notinnernormal}

\end{enumerate}
Indeed, in the first case the $d_P$ with $P\neq P_1,P_2$ generate a face of $\mcC_\prim$ and $g$ is an inward normal vector to this face, and in the second case the $d_P$ with $P\neq P_1,P_2$ do not generate a face of $\mcC_\prim$. Thus, we see that $\mcG$ contains an inward normal vector to each facet of $\mcC_\prim$, hence contains generators for $\mcC_\mcA$ by Lemma \ref{Lemma_Of_The_Whole_Thesis}.

For any primitive exchange relation $P$ not of type 3, Lemma \ref{lemma:newdp} implies that if $P$ is one of $P_1$ or $P_2$, we may take $g=v(l)$ for $l\in L_P$ to satisfy \eqref{innernormal} above. We may thus assume that both $P_1$ and $P_2$ are primitive of type 3.
If the succeeding diagonals are of different colors, we way assume that the blue diagonal of $P_1$ is succeeding and of the form $[j,\overline j]$ 
for $1\leq j \leq n$, and the blue diagonal of $P_2$ is preceding and of the form $[j+k,\overline j+k]$ for $0\leq k \leq n-1$.
Lemma \ref{lemma:dpnew2} implies that we may take $g=w(j,k)$ to satisfy \eqref{innernormal} above.

Suppose instead that succeeding diagonals are both blue. We may assume that they are of the form $[j,\overline j]$ 
for $1\leq j \leq n$ and $[j+k+2,\overline j+k+2]$ and $0\leq k \leq n-1$, $k\neq n-2$. 
Then Lemma \ref{lemma:dpnew2} implies that we may take $g'=w(j,k)-\widehat{w}(j+k+1)$ to satisfy \eqref{notinnernormal}. 
If both succeeding diagonals are red, we may swap the roles of red and blue to obtain a similar conclusion.
We conclude that the generators of $\mcC_\mcA$ are among the elements of $\mcG$.

We next note that for every $g\in \mcG$ and every primitive relation, $g\cdot d_P\geq 0$ by Lemma \ref{lemma:newdp} and \ref{lemma:dpnew2}. Hence, $\mcG$ is contained in $\mcC_\mcA$. 

Finally, fix $g\in \mcG$. We will show that $g$ in fact generates a ray of $\mcC_\mcA$. For this, we will show that there exists an element $u\in \mcC_\prim$ that has dot product $0$ with $g$, and positive dot product with all other elements of $\mcG$. 
Indeed, if $g=v(l)$ for $l$ of length at most $n-2$, there is a unique exchange relation $P$ with $l\in L_P$. Taking $u=\sum_{P'\neq P} d_{P'}$, Lemmas \ref{lemma:newdp} and \ref{lemma:dpnew2} imply that $u$ has the desired property.
If we instead take $g=w(j,k)$, let $P_1$ and $P_2$ respectively be the unique exchange relations with $[j,\overline j]$ and $[j+k,\overline j+k]$ respectively the succeeding and preceding diagonals of $P_1$ and $P_2$. We then take $u=\sum_{P'\neq P_1,P_2} d_{P'}$. That $u$ has the desired property follows again from Lemmas \ref{lemma:newdp} and \ref{lemma:dpnew2}.
\end{proof}

\section{Classical Types without Frozen Variables}\label{sec:nofrozen}
\subsection{Results}\label{sec:cfresults}
In this section, we describe generators of the Gr\"obner cones $\mcC_\mcA$ when $\mcA$ is of type $A_n$, $B_n$, $C_n$, or $D_n$ and has no frozen variables.
As in the previous section, we let $N$ be $n+3$, $2n+2$, $2n+2$, or $2n$ respectively and will describe elements of $\mcA$ using (pairs of) diagonals of $\bP_N$.
Let $\mcA'$ be the cluster algebra of the same type as $\mcA$ with frozen variables as in \S\ref{sec:comb}. We may obtain $\mcC_\mcA$ by intersecting the cone $\mcC_{\mcA'}$ with the linear subspace $\Lambda$ where coordinates indexed by frozen variables of $\mcA'$ vanish, and projecting the result onto the coordinates indexed by cluster variables.
By forgetting the projection, we will describe $\mcC_\mcA$ below as living in the same space $\RR^{\mcV\cup \mcW}$ as $\mcA'$, where $\mcV$ are the cluster variables of $\mcA$ (or $\mcA'$), and $\mcW$ are the frozen variables of $\mcA'$.

We say that a diagonal or an edge $l$ of $\bP_N$ is \emph{even} (respectively \emph{odd}) if its length is even (respectively odd).
For a diagonal or edge $l$ that is not a diameter, we let $\delta_-(l),\delta_+(l)$ be respectively the first and last vertices on the minor arc of $l$, ordered in a counterclockwise fashion.

In \S\ref{sec:linspace} we defined for any vertex $i$ of $\bP_N$ a vector $E_i$, depending on the type of the cluster algebra. We now define
\[
E=\sum_{i=1}^N E_i.
\]

For any diagonal or edge $l$ that is not a diameter, we set
\begin{align*}
	E(l)&=\displaystyle\sum_{k=1}^{\lfloor \len(l)/2 \rfloor} E_{\delta_-(l)+2k-1}\\
	\overline{E}(l)&=\displaystyle\sum_{k=1}^{\lfloor (N-\len(l))/2 \rfloor} E_{\delta_+(l)+2k-1}\\
	H(l)&=\sum_{k=0}^{\len(l)-1} (-1)^k E_{\delta_-(l)+k}.
\end{align*}
Using this, we then define
\begin{align*}
	\widetilde {v}(l)=\begin{cases}
	v(l)+\frac{1}{2}E-\overline{E}(l)\qquad& \textrm{$l$ odd in type $A_n$ and $n$ even}\\
	v(l)+\frac{1}{4}E-E([\delta_+(l),\overline{\delta_-(l)}])\qquad& \textrm{$l$ odd in type $B_n$,$C_n$,$D_n$ and $N/2$ odd}\\
			v(l)+E(l)\qquad&\textrm{otherwise}
\end{cases}.
\end{align*}

\begin{theorem}\label{thm:cf1}
	Suppose that $n$ is even and $\mcA$ is of type $A_n$, $B_n$, or $C_n$ with no frozen variables. Then $\mcC_{\mcA}$ is a pointed simplicial cone with rays generated by $\widetilde{v}(l)$
as $l$ ranges over all diagonals and edges of $\bP_N$ of non-maximal length.
\end{theorem}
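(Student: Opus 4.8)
The plan is to leverage the relationship between $\mcC_\mcA$ and the Gröbner cone $\mcC_{\mcA'}$ of the companion cluster algebra $\mcA'$ with frozen variables, whose lineality space and rays are described by Proposition \ref{Proposition_ABCn_LinSp} and Theorem \ref{Theorem_Statement_Result3_ABCn}. Writing $L=\mathrm{span}_\RR\{E_i\}$ for the lineality space and $\sigma=\mathrm{cone}\{v(l)\}$ for the cone on the ray generators, we have $\mcC_{\mcA'}=L+\sigma$, and by the discussion opening \S\ref{sec:cfresults}, $\mcC_\mcA=\mcC_{\mcA'}\cap\Lambda$, where $\Lambda\subseteq\RR^{\mcV\cup\mcW}$ is the coordinate subspace on which all frozen coordinates vanish.

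First I would establish the decomposition $\RR^{\mcV\cup\mcW}=L\oplus\Lambda$, which is exactly where the hypothesis that $n$ is even enters. Since frozen variables correspond to (pairs of) edges of $\bP_N$, and the coefficient of an edge $[i,i+1]$ in $\sum_j c_jE_j$ is $c_i+c_{i+1}$, an element $\sum_j c_jE_j$ lies in $\Lambda$ exactly when the cyclic system $c_i+c_{i+1}=0$ holds. The relevant cycle has length $n+3$ (type $A_n$) or, after antipodal folding $E_i=E_{\overline i}$, length $n+1$ (types $B_n,C_n$), and this length is odd precisely when $n$ is even, forcing $c_i=0$ for all $i$; hence $L\cap\Lambda=0$. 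Combined with $\dim L=\#\mcW$ (Lemma \ref{Lemma_Nullity_q}) and $\dim\Lambda=\#\mcV$, this yields the direct sum. In particular the lineality space of $\mcC_\mcA=\mcC_{\mcA'}\cap\Lambda$ is $L\cap\Lambda=0$, so $\mcC_\mcA$ is pointed.

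Next I would identify $\widetilde v(l)$ as the image of $v(l)$ under the projection $\RR^{\mcV\cup\mcW}\to\Lambda$ along $L$. Each correction term ($E(l)$, $\overline E(l)$, $\tfrac12 E$, $\tfrac14 E$, and $E([\delta_+(l),\overline{\delta_-(l)}])$) lies in $L$, so $\widetilde v(l)\equiv v(l)\pmod L$ is immediate; the content is to check $\widetilde v(l)\in\Lambda$, i.e. that the correction cancels the frozen coordinates of $v(l)$. This is a telescoping count: the frozen part of $v(l)$ is minus the sum of the edges on the minor arc of $l$, and $E(l)$ supplies exactly these edges when $\len(l)$ is even, while in the odd case the combination $\tfrac12 E-\overline E(l)$ (resp. $\tfrac14 E-E([\delta_+(l),\overline{\delta_-(l)}])$) supplies them, using that a consecutive half-circle of edges meets each antipodal edge pair exactly once. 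Granting this, each $v(l)=\widetilde v(l)-\ell_l$ with $\ell_l\in L$; writing an arbitrary $x\in(L+\sigma)\cap\Lambda$ as $x=\ell+\sum a_i v(l_i)$ and substituting, the $L$-component must vanish by directness of $L\oplus\Lambda$, so $x=\sum a_i\widetilde v(l_i)$. Hence $\mcC_\mcA=\mathrm{cone}\{\widetilde v(l)\}$.

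Finally, for the simplicial claim I would count rays. The number of distinct $\widetilde v(l)$, noting $\widetilde v(l)=\widetilde v(\overline l)$ in types $B_n,C_n$ since these are the unique representatives of a common coset, equals $\#\mcV$, matching $\dim\mcC_\mcA=\#\mcV$. Since $I_{\mcA'}^\cross$ is a monomial ideal, $\mcC_{\mcA'}$ is full-dimensional, so the $v(l)$ span $\RR^{\mcV\cup\mcW}/L$; being exactly $\#\mcV=\dim(\RR^{\mcV\cup\mcW}/L)$ in number, they form a basis modulo $L$, whence their projections $\widetilde v(l)$ are linearly independent and $\mcC_\mcA$ is simplicial. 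The main obstacle is precisely the frozen-coordinate cancellation of the third paragraph: verifying uniformly across the even/odd length cases and across types $A_n,B_n,C_n$ (including the extra diameter term of $C_n$) that the prescribed corrections land in $\Lambda$ is the one genuinely computational ingredient, everything else being formal consequences of the $L\oplus\Lambda$ decomposition.
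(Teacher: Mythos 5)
Your proposal is correct, and its skeleton is the same as the paper's: you realize $\mcC_\mcA$ as $\mcC_{\mcA'}\cap\Lambda$, prove $L\cap\Lambda=0$ when $n$ is even (your odd-cycle computation $c_i+c_{i+1}=0$ is precisely the content of the paper's claim that the projections of the $E_i$ to $\RR^{\mcW}$ are linearly independent), verify $\widetilde{v}(l)\in\Lambda$ by the same telescoping cancellation that the paper isolates as Lemma \ref{lemma:inlambda1}, and deduce that the $\widetilde{v}(l)$ generate $\mcC_\mcA$ by absorbing the $L$-component into the direct sum, which is the paper's Lemma \ref{lemma:gen} specialized to this case. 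Where you diverge is the endgame. The paper proves each $\widetilde{v}(l)$ spans a ray by observing it is a lineality-space translate of a ray generator of $\mcC_{\mcA'}$ (its dual-vector machinery $u=\sum_P d_P$ is reserved for the odd-$n$ and type $D_n$ theorems, where generators are sums of two ray generators), and it leaves the \emph{simplicial} assertion to an implicit count; you instead argue by dimension: there are exactly $\#\mcV$ distinct generators (the identification in types $B_n,C_n$ holds because $v(l)$ and $v(\overline{l})$ are in fact equal as vectors, both being $-\sum_{k\in F_l}e_{\{k,\overline{k}\}}$, so your ``common coset'' phrasing can be made precise), they span the $\#\mcV$-dimensional space $\Lambda$ since $\mcC_{\mcA'}$ is full-dimensional, hence they form a basis, giving the ray property and simpliciality simultaneously. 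Your counting argument is cleaner and more complete for this particular theorem, since it is the only written justification of simpliciality; the trade-off is that it does not extend to the companion theorems (\ref{thm:cf2}--\ref{thm:cf4}), where the number of listed generators exceeds the dimension and the paper's pairing against primitive degrees $d_P$ becomes necessary.
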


\begin{theorem}\label{thm:cf2}
	Suppose that $n$ is odd and $\mcA$ is of type $A_n$ ($n\geq 1$), or $B_n$ or $C_n$ ($n\geq 3$) with no frozen variables. The lineality space of $\mcC_\mcA$ is the span of the vector $\sum_{i=1}^{N} (-1)^iE_{i}$. The  rays of $\mcC_\mcA$  modulo lineality space are generated by
$\widetilde{v}(l)$
for $l$ an even diagonal of non-maximal length of $\bP_N$,
and
\begin{align*}
	\widetilde{v}(l)+\widetilde{v}(l')+H([\delta_+(l),\delta_+(l')])
\end{align*}
for $l,l'$ odd diagonals or edges of non-maximal length satisfying $\delta_+(l)+\delta_+(l')\equiv 1\mod 2$.
\end{theorem}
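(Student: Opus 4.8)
The plan is to leverage the description of the Gröbner cone $\mcC_{\mcA'}$ of the algebra $\mcA'$ with frozen variables (Theorem \ref{Theorem_Statement_Result3_ABCn} and Proposition \ref{Proposition_ABCn_LinSp}) via the identification $\mcC_\mcA=\mcC_{\mcA'}\cap\Lambda$ recorded at the start of \S\ref{sec:cfresults}. Write $\mathcal{L}=\operatorname{span}\{E_i\}$ for the lineality space of $\mcC_{\mcA'}$, whose dimension equals the number of frozen variables. Since intersecting with a linear subspace preserves the property that both $\pm$ directions lie in the cone, the lineality space of $\mcC_\mcA$ is exactly $\mathcal{L}\cap\Lambda$. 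A vector $\sum a_iE_i$ lies in $\Lambda$ iff all its frozen (edge) coordinates vanish; as each edge $[i,i+1]$ of $\bP_N$ receives coefficient $a_i+a_{i+1}$, this forces $a_i=(-1)^ic$. Because $N$ is even (as $n$ is odd), this pattern is consistent around the polygon, so $\mathcal{L}\cap\Lambda$ is one-dimensional, generated by $\sum_{i=1}^N(-1)^iE_i$. This settles the lineality claim.

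For the rays the central device is the linear functional $\Psi(x)=\sum_i(-1)^ix_{[i,i+1]}$, the alternating sum of edge coordinates. One checks $\Psi$ vanishes on both $\Lambda$ and $\mathcal{L}$ and that $\ker\Psi=\Lambda+\mathcal{L}$ is a hyperplane (by a dimension count, using $\dim(\Lambda\cap\mathcal{L})=1$). The key computation is $\Psi(v(l))=0$ for $l$ even and $\Psi(v(l))=\pm1$ for $l$ odd: indeed $v(l)$ has frozen coordinate $-1$ on each of the $\len(l)$ minor-arc edges of $l$, and the alternating sum of these is $0$ or $\pm1$ according to the parity of $\len(l)$. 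Thus odd $v(l)$ lie outside $\Lambda+\mathcal{L}$ while even ones lie inside, which is the origin of the even/odd dichotomy in the statement.

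I would then classify the rays of $\mcC_\mcA=\mcC_{\mcA'}\cap\Lambda$. Since $\mcC_{\mcA'}$ is simplicial with rays $v(l)$ modulo $\mathcal{L}$, every ray of the intersection is $F\cap\Lambda$ for a minimal face $F=\operatorname{cone}(S)+\mathcal{L}$ with $S$ a subset of the $v(l)$. A dimension count using $\Psi$ shows $F\cap\Lambda$ is one-dimensional modulo lineality exactly when either $S=\{v(l)\}$ with $l$ even, or $S=\{v(l),v(l')\}$ with $l,l'$ both odd: a singleton odd $l$ or a mixed even/odd pair only recovers the lineality or an even ray, and any $|S|\geq3$ produces a higher-dimensional face. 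In the two-odd case $\operatorname{cone}(v(l),v(l'))$ meets $\Lambda$ in a nonzero ray precisely when the two leftover frozen coordinates detected by $\Psi$ cancel with nonnegative coefficients, which by the sign computation happens iff $\delta_+(l)$ and $\delta_+(l')$ have opposite parities, i.e.\ $\delta_+(l)+\delta_+(l')\equiv1\pmod2$.

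Finally I would verify the explicit representatives. For even $l$ the correction $E(l)=\sum_kE_{\delta_-(l)+2k-1}$ adds $+1$ to each minor-arc edge exactly once, cancelling the frozen coordinates of $v(l)$, so $\widetilde v(l)=v(l)+E(l)\in\Lambda$ generates the corresponding ray. For an odd pair each $\widetilde v(l)$ retains a single leftover frozen coordinate $-1$ on the edge $[\delta_+(l)-1,\delta_+(l)]$, and $H([\delta_+(l),\delta_+(l')])$ telescopes to contribute $+1$ on exactly the two boundary edges $[\delta_+(l)-1,\delta_+(l)]$ and $[\delta_+(l')-1,\delta_+(l')]$, where the opposite-parity condition guarantees the connecting arc has odd length so that both boundary signs are $+1$; hence $\widetilde v(l)+\widetilde v(l')+H([\delta_+(l),\delta_+(l')])\in\Lambda$ and represents the ray found above. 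The same argument applies in types $B_n$ and $C_n$, where $n$ odd makes $N/2=n+1$ even, so the ``otherwise'' branch of $\widetilde v$ is again the relevant one and only the central-symmetry bookkeeping changes. The main obstacle is precisely this edge-coordinate bookkeeping underlying the parity dichotomy together with the face-by-face dimension analysis; the technical point that makes the pairwise analysis legitimate is the simpliciality of $\mcC_{\mcA'}$, ensuring every subset $S$ spans a face.
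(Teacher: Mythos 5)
Your proposal is correct in substance and shares the paper's skeleton: both arguments start from $\mcC_\mcA=\mcC_{\mcA'}\cap\Lambda$, compute the lineality space as $\mathcal{L}\cap\Lambda$ (your cyclic system $a_i+a_{i+1}=0$ is a slicker version of the paper's linear-independence count on the projections of the $E_i$), and both hinge on the same alternating edge functional --- your $\Psi$ is literally the projection $\pi$ of Lemma \ref{lemma:gen} --- together with the same explicit representatives $\widetilde v(l)$ and $H([\delta_+(l),\delta_+(l')])$, whose frozen projections you verify exactly as in Lemmas \ref{lemma:inlambda1} and \ref{lemma:F}. Where you genuinely diverge is the classification step. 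The paper never analyzes faces of $\mcC_{\mcA'}$: generation is proved by a coefficient-pairing argument (any $u\in\mcC_\mcA$ is a nonnegative combination of the $\widetilde g$ plus lineality; $\pi(u)=0$ forces $\sum_{\mcG_+}\lambda_g=\sum_{\mcG_-}\lambda_g$, so the odd generators can be matched into pairs), and extremality is proved separately by exhibiting, for each listed generator $\widetilde g$, a dual vector $u=\sum_{P:\,\widetilde g\cdot d_P=0}d_P\in\mcC_\prim$ with $\widetilde g\cdot u=0$ and $\widetilde g'\cdot u>0$ otherwise. You instead classify the faces $F\cap\Lambda$ of the intersection, which yields generation and extremality in one stroke; the price is that you must know that every subset of the rays $\{v(l)\}$ spans a face of $\mcC_{\mcA'}$, i.e.\ that $\mcC_{\mcA'}$ is simplicial modulo its lineality space.

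That simpliciality claim is the one real gap: it is asserted, not proved, and the paper never states it (Theorem \ref{Theorem_Statement_Result3_ABCn} only says the $v(l)$ generate the rays). It is true, and you can close the gap cheaply from Lemma \ref{lemma:newdp}: the distinct rays $v(l)$ correspond bijectively to primitive exchange relations $P$ (via $l\in L_P$), and $v(l)\cdot d_{P'}>0$ if and only if $P'=P$, with the dot product vanishing otherwise; this dual pairing forces the $d_P$ to be linearly independent, so $\mcC_\prim$ is simplicial and hence so is its dual $\mcC_{\mcA'}$ modulo lineality. You should also record the standard fact that every face of $\mcC_{\mcA'}\cap\Lambda$ has the form $F\cap\Lambda$ for a face $F$ of $\mcC_{\mcA'}$, and the observation (which your hyperplane identity $\ker\Psi=\Lambda+\mathcal{L}$ supplies) that for $F=\operatorname{cone}(S)+\mathcal{L}$ the cone $F\cap\Lambda$ modulo lineality is isomorphic to $\{\lambda\in\RR_{\ge 0}^S:\sum_i\lambda_i\Psi(v(l_i))=0\}$ --- this is exactly why $\Psi$ is the only obstruction and why your enumeration (singleton even, or odd pair with opposite $\Psi$-signs, the parity condition $\delta_+(l)+\delta_+(l')\equiv 1\bmod 2$) is exhaustive. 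With those two points made explicit, your argument is a complete and arguably more geometric alternative to the paper's Lemma \ref{lemma:gen} plus dual-vector computation.
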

As always, for $D_n$ we need slightly more notation. Define $\widetilde{w}(j,k)$ exactly as we defined $w(j,k)$ in \S\ref{sec:rays}, except we replace every occurrence of $v(l)$ in the formulas of $w(i),\widehat{w}(i),w(j,k)$ with $\widetilde{v}(l)$.
\begin{theorem}\label{thm:cf3}
	Suppose that $n\geq 5$ is odd and $\mcA$ is of type $D_n$ with no frozen variables. The lineality space of $\mcC_\mcA$ is the span of the vector $u^{\mathrm{diam}}$. The  rays of $\mcC_\mcA$ modulo lineality space are generated by $\widetilde{v}(l)$  as $l$ ranges over all diagonals and edges of $\bP_N$ of length at most $n-2$, and by $\widetilde{w}(j,k)$ for $1\leq j\leq n$, $0\leq k\leq n-1$.
\end{theorem}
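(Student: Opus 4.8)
The plan is to deduce Theorem \ref{thm:cf3} from the already-established frozen case. As recorded at the start of \S\ref{sec:cfresults}, inside $\RR^{\mcV\cup\mcW}$ we have the identification $\mcC_\mcA=\mcC_{\mcA'}\cap\Lambda$, where $\mcA'$ is the type $D_n$ cluster algebra carrying the frozen variables of \S\ref{sec:comb} and $\Lambda$ is the subspace on which all frozen coordinates vanish. Writing $\mathcal{L}$ for the lineality space of $\mcC_{\mcA'}$, the elementary fact I would first isolate is this: if $\Lambda+\mathcal{L}=\RR^{\mcV\cup\mcW}$, then the quotient map $\RR^{\mcV\cup\mcW}\to\RR^{\mcV\cup\mcW}/\mathcal{L}$ restricts to a surjection $\mcC_{\mcA'}\cap\Lambda\to \mcC_{\mcA'}/\mathcal{L}$ with kernel exactly $\Lambda\cap\mathcal{L}$. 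Surjectivity holds because any $x\in\mcC_{\mcA'}$ can be written $x=\lambda+s$ with $\lambda\in\Lambda$, $s\in\mathcal{L}$, so that $\lambda=x-s\in\mcC_{\mcA'}\cap\Lambda$ represents the class of $x$; the kernel computation is immediate since the lineality of $\mcC_{\mcA'}\cap\Lambda$ is $\mathcal{L}\cap\Lambda$. This yields an isomorphism of cones $(\mcC_{\mcA'}\cap\Lambda)/(\Lambda\cap\mathcal{L})\cong \mcC_{\mcA'}/\mathcal{L}$ carrying rays to rays; hence the rays of $\mcC_\mcA$ modulo lineality are precisely the lifts into $\Lambda$ of the ray generators $v(l)$ and $w(j,k)$ of $\mcC_{\mcA'}$ from Theorem \ref{Theorem_Statement_Result3_Dn}, each obtained by subtracting its $\mathcal{L}$-component.

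The heart of the argument is therefore to verify $\Lambda+\mathcal{L}=\RR^{\mcV\cup\mcW}$, and this is exactly where the hypothesis that $n$ is odd enters. By Proposition \ref{Proposition_Result3_Dn_Linsp}, $\mathcal{L}$ is spanned by $E_1,\dots,E_n$ together with $u^{\mathrm{diam}}$. Projecting onto the frozen coordinates $\RR^{\mcW}$, the vector $u^{\mathrm{diam}}$ dies (it is supported on diameters), while each $E_i$ meets precisely the two frozen edge-pairs incident to the vertex class $i$; thus the resulting $n\times n$ matrix is the unsigned vertex--edge incidence matrix of the $n$-cycle on vertex classes $1,\dots,n$. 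This matrix has determinant $\pm 2$ when $n$ is odd and $0$ when $n$ is even, so for $n$ odd the frozen parts of $E_1,\dots,E_n$ span $\RR^{\mcW}$ and hence $\Lambda+\mathcal{L}=\RR^{\mcV\cup\mcW}$. A dimension count then pins down the lineality space: by Lemma \ref{Lemma_Nullity_q}, $\dim\mathcal{L}=|\mcW|+1$, while $\dim\Lambda=|\mcV|$, so $\dim(\Lambda\cap\mathcal{L})=1$; since $u^{\mathrm{diam}}\in\Lambda\cap\mathcal{L}$, the lineality space of $\mcC_\mcA=\mcC_{\mcA'}\cap\Lambda$ is exactly the span of $u^{\mathrm{diam}}$, as claimed.

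It remains to check that $\widetilde v(l)$ and $\widetilde w(j,k)$ are the correct lifts, i.e.\ that they lie in $\Lambda$ and are congruent to $v(l)$ and $w(j,k)$ modulo $\mathcal{L}$. The congruence is immediate: the correction terms defining $\widetilde v(l)$ (namely $E(l)$ in the even case, and $\frac14 E-E([\delta_+(l),\overline{\delta_-(l)}])$ in the odd case, recalling $N/2=n$ is odd) are sums of the $E_i$ and so lie in $\mathcal{L}$, and each $\widetilde w(j,k)$ arises from $w(j,k)$ by replacing every $v$-term with the corresponding $\widetilde v$-term, hence differs from it by an element of $\mathcal{L}$. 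Membership in $\Lambda$ I would verify directly: the frozen part of $v(l)$ is $-\sum e_{\{k,\overline k\}}$ over the edge-pairs on the minor arc of $l$, and the correction is built so that the $E_i$ entering it contribute $+1$ to exactly those edge-pairs; the diameter coordinates $e_{[i,\overline i]}$ and $e_{\widehat{[i,\overline i]}}$ appearing in $\widetilde w(j,k)$ lie in $\mcV$ and so do not disturb $\Lambda$. With this in hand, the isomorphism of the first paragraph sends the ray of $v(l)$ (resp.\ $w(j,k)$) to the ray generated by $\widetilde v(l)$ (resp.\ $\widetilde w(j,k)$), producing exactly the stated list of rays modulo lineality.

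I expect the main obstacle to be the explicit frozen-coordinate verification of the previous paragraph, especially the odd-length case: tracking which edge-pairs on the minor arc of $l$ receive contributions from the $E_i$ occurring in $E(l)$ and from the antipodal term $E([\delta_+(l),\overline{\delta_-(l)}])$, and confirming that the $\frac14 E$ term supplies precisely the right global correction, is fiddly and must be handled case-by-case. By contrast, the conceptually delicate point---that $n$ odd is exactly what makes the cycle incidence matrix invertible and collapses the lineality to a single dimension---is clean once isolated, and it is precisely what forces the separate even-$n$ treatment in Theorem \ref{thm:cf4}.
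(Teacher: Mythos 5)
Your proposal is correct, and its overall strategy---reduce to the frozen case via $\mcC_\mcA=\mcC_{\mcA'}\cap\Lambda$, exploit the oddness of $n$ through the incidence matrix of the $n$-cycle, and check that $\widetilde{v}(l)$, $\widetilde{w}(j,k)$ are lifts into $\Lambda$ of the frozen-case generators---matches the paper's. The mechanism by which you transfer the ray structure, however, is genuinely different and cleaner for this particular theorem. The paper never states your key structural lemma (that $\Lambda+\mathcal{L}=\RR^{\mcV\cup\mcW}$ forces the quotient map to restrict to a surjection $\mcC_{\mcA'}\cap\Lambda\to\mcC_{\mcA'}/\mathcal{L}$ with kernel $\Lambda\cap\mathcal{L}$, hence a cone isomorphism carrying rays to rays); instead it proves generation by hand (Lemma \ref{lemma:gen}: expand $u\in\mcC_\mcA$ in the frozen-case generators, replace each $g$ by $\widetilde{g}$, and observe that in this case all $\widetilde{g}\in\Lambda$ so the lineality term lands in $\Lambda\cap\mathcal{L}$) and then argues extremality separately (a lineality translate of an extreme ray of $\mcC_{\mcA'}$ lying in the subcone is still extreme). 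Your isomorphism delivers generation and extremality in one stroke, and it isolates exactly why the parity hypothesis matters: the odd-cycle incidence matrix is invertible, so $\mathcal{L}$ surjects onto $\RR^{\mcW}$. What the paper's heavier machinery buys is uniformity: Lemma \ref{lemma:gen} and the auxiliary projection $\pi$ are designed to also handle the even-$n$ cases (Theorems \ref{thm:cf2} and \ref{thm:cf4}), where $\Lambda+\mathcal{L}$ is a proper subspace, your isomorphism fails, and genuinely new rays such as $\widetilde{v}(l)+\widetilde{v}(l')+H([\delta_+(l),\delta_+(l')])$ appear---consistent with your closing remark. One point of bookkeeping: the ``fiddly'' $\Lambda$-membership checks you defer are exactly the paper's Lemmas \ref{lemma:inlambda1} and \ref{lemma:inlambda2} (for $D_n$ with $N/2=n$ odd, the correction $\frac14 E - E([\delta_+(l),\overline{\delta_-(l)}])$ works because $\frac14E$ projects to the all-ones vector on edge pairs and the antipodal term covers precisely the edge pairs off the minor arc of $l$), so your sketch of that verification is accurate, and your proof is complete once it is written out.
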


\begin{theorem}\label{thm:cf4}
	Suppose that $n\geq 4$ is even and $\mcA$ is of type $D_n$ with no frozen variables. The lineality space of $\mcC_\mcA$ is the span of the vectors $\sum_{i=1}^{N} (-1)^i E_{i}$ and $u^{\mathrm{diam}}$. The  rays of $\mcC_\mcA$ modulo lineality space are generated by:
	\begin{enumerate}
		\item 	$\widetilde{v}(l)$  as $l$ ranges over all even diagonals of $\bP_N$ of length at most $n-2$;
		\item  $\widetilde{w}(j,k)$ for $1\leq j\leq n$, $0\leq k\leq n-1$, and $k$ even;
		\item $\widetilde{v}(l)+\widetilde{v}(l')+H([\delta_+(l),\delta_+(l')])$ for $l,l'$ odd diagonals or edges of non-maximal length satisfying $\delta_+(l)+\delta_+(l')\equiv 1\mod 2$;
		\item 	$\widetilde{w}(j,k)+\widetilde{v}(l)+H([j+k,\delta_+(l)])$ for 
$1\leq j\leq n$, $0\leq k\leq n-1$, $k$ odd, and $l$ an odd diagonals or edge of non-maximal length satisfying $j+\delta_+(l)\equiv 0 \mod 2$.
	\end{enumerate}
		\end{theorem}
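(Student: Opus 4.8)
The plan is to bootstrap from the frozen-variable description of Proposition~\ref{Proposition_Result3_Dn_Linsp} and Theorem~\ref{Theorem_Statement_Result3_Dn} for the algebra $\mcA'$ of type $D_n$ with frozen variables. By Lemma~\ref{Lemma_Of_The_Whole_Thesis} it suffices to understand $\mcC_\mcA$ through its pairing with the cone $\mcC_\prim$. The primitive exchange relations of the frozen-free algebra $\mcA$ are exactly those of $\mcA'$: setting all frozen variables to $1$ makes a term equal to the empty product precisely when that term consisted only of frozen variables, i.e.\ when it was the primitive term. Under this correspondence the degree induced for $\mcA$ is the image $\pi(d_P)$ of the frozen degree $d_P$ under the projection $\pi$ killing the frozen coordinates. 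Since we describe $\mcC_\mcA$ inside the subspace $\Lambda\subseteq\RR^{\mcV\cup\mcW}$ on which the frozen coordinates vanish, for any $\omega\in\Lambda$ we have $\omega\cdot\pi(d_P)=\omega\cdot d_P$, so $\omega\in\mcC_\mcA$ iff $\omega\cdot d_P\geq 0$ for every primitive $P$.

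The calculation then reduces entirely to the frozen case by the following observation. Every correction appearing in $\widetilde v(l)$, $\widetilde w(j,k)$, and $H(l)$ --- namely $E$, $E(l)$, $\overline E(l)$, and $H(l)$ itself --- is a $\ZZ$-combination of the vectors $E_i$, and $E_i\cdot d_P=0$ for every vertex $i$ and every primitive $P$ by Lemmas~\ref{Claim_512} and~\ref{Lemma_FixingVertex_is_in_LinealitySpace}. Hence each listed generator $g$ differs from the corresponding frozen object $\widehat g\in\{v(l),\ w(j,k),\ v(l)+v(l'),\ w(j,k)+v(l)\}$ by an element of the linear span of the $E_i$, so that $g\cdot d_P=\widehat g\cdot d_P$ for all $P$. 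These dot products are already computed in Lemmas~\ref{lemma:newdp} and~\ref{lemma:dpnew2}; in particular they are nonnegative because $\widehat g\in\mcC_{\mcA'}$. Provided each $g$ also lies in $\Lambda$, this shows $g\in\mcC_\mcA$.

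It remains to (i) identify the lineality space, (ii) check $\Lambda$-membership, and (iii) prove generation. For (i), the lineality space of $\mcC_\mcA=\mcC_{\mcA'}\cap\Lambda$ is the intersection of the lineality space of $\mcC_{\mcA'}$ (spanned by the $E_i$ and $u^{\diam}$, Proposition~\ref{Proposition_Result3_Dn_Linsp}) with $\Lambda$. As $u^{\diam}$ already lies in $\Lambda$, and using the folding identity $E_i=E_{\overline i}$, a combination $\sum a_iE_i$ lies in $\Lambda$ iff its weight on each edge-pair vanishes, which is the cyclic system $a_i+a_{i+1}=0$; for $n$ even this has the one-dimensional alternating solution yielding $\sum_{i=1}^N(-1)^iE_i$, matching the claim. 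For (ii), when $\len(l)$ is even the term $E(l)$ places $+1$ on each edge of the minor arc of $l$, exactly cancelling the $-1$'s of $v(l)$, so $\widetilde v(l)\in\Lambda$; likewise every $\widetilde w(j,k)$ with $k$ even lies in $\Lambda$ since each constituent $\widetilde w(i)$ involves only the even-length inner diagonal $[i+1,\overline i-1]$ (of length $n-2$). When $\len(l)$ is odd a single edge of the minor arc, the one meeting $\delta_+(l)$, is left uncancelled; the alternating sum $H([\delta_+(l),\delta_+(l')])$ telescopes to nonzero edge-pair entries only at its two ends, and under the parity hypothesis $\delta_+(l)+\delta_+(l')\equiv 1$ these cancel the residuals of $\widetilde v(l)$ and $\widetilde v(l')$, placing combination~(3) into $\Lambda$; combination~(4) is identical, with the odd inner diagonal $[j+k,\overline j+k+1]$ of $\widetilde w(j,k)$ (present exactly when $k$ is odd) furnishing the residual cancelled by $H([j+k,\delta_+(l)])$.

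For (iii) I would follow the facet template of the proof of Theorem~\ref{Theorem_Statement_Result3_Dn}: for each pair of primitive relations $(P_1,P_2)$ either produce a generator $g$ with $g\cdot d_{P}=0$ for all $P\neq P_1,P_2$ and strict positivity on one of them (an inner normal to a facet of $\mcC_\prim$), or a vector $g'$ with $g'\cdot d_P=0$ off $\{P_1,P_2\}$ and opposite signs on $P_1,P_2$ (certifying that $P_1,P_2$ are not co-facial); extremality of each $g$ then follows by exhibiting $u=\sum_{P'}d_{P'}$ (summed over the complementary primitive relations) with $u\cdot g=0$ and $u$ strictly positive on the other generators, exactly as in the frozen case. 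The genuinely new phenomenon, and the main obstacle, is that $\pi$ identifies the directions of primitive degrees of opposite parity, which both enlarges the lineality space by the vector $\sum_i(-1)^iE_i$ and forces the odd-length diagonals to enter only through the paired combinations~(3) and~(4). The crux is therefore the parity bookkeeping showing that exactly the parity-matched pairs ($\delta_+(l)+\delta_+(l')\equiv 1$, resp.\ $j+\delta_+(l)\equiv 0$ with $k$ odd) supply inner normals, while the wrong-parity pairs of type-$3$ relations fail to be co-facial and are separated by functionals mirroring those of part~(2) of Lemma~\ref{lemma:dpnew2}. This parallels, and is somewhat more involved than, the corresponding steps in Theorems~\ref{thm:cf1}--\ref{thm:cf3}.
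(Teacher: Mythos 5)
Your steps (i) and (ii) are correct and essentially reproduce the paper's Lemmas \ref{lemma:inlambda1}, \ref{lemma:F}, \ref{lemma:inlambda2} and \ref{lemma:incone}: the listed vectors lie in $\mcC_{\mcA'}\cap\Lambda=\mcC_\mcA$, and your computation of the lineality space agrees with the paper's. The gap is in step (iii), which is where the substance of the theorem lies, and your plan there has a structural flaw. The pair-based facet dichotomy you import from the proof of Theorem \ref{Theorem_Statement_Result3_Dn} depends on a dimension count specific to the frozen case: there the lineality space of $\mcC_{\mcA'}$ has dimension one more than the number of frozen variables, so $\mcC_\prim$ has dimension one less than the number of cluster variables and every facet contains all but at most two of the degrees $d_P$. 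In the no-frozen even $D_n$ case the lineality space is two-dimensional, the (projected) cone of primitive degrees drops in dimension accordingly, and facets can omit \emph{three} of the $d_P$: indeed the listed generator $\widetilde{w}(j,k)+\widetilde{v}(l)+H([j+k,\delta_+(l)])$ of item (4) pairs strictly positively with exactly three primitive degrees (the two type-3 relations attached to $w(j,k)$, by Lemma \ref{lemma:dpnew2}, and the unique relation $P$ with $l\in L_P$, by Lemma \ref{lemma:newdp}) and vanishes on all others. So a dichotomy quantified over pairs $(P_1,P_2)$ cannot account for all facets, and your template would never produce the item (4) generators.

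Second, and more concretely: your parity bookkeeping, applied uniformly, would also produce candidates $\widetilde{w}(j,k)+\widetilde{w}(j',k')+H(\cdot)$ with $k,k'$ both odd and residual edges of opposite parity; these lie in $\mcC_\mcA$ by exactly the same reasoning as your combinations (3) and (4), yet they are absent from the theorem's list. Any complete proof must explain why they are redundant, and your proposal is silent on this. The paper handles it with Lemma \ref{lemma:ws}: $w(j,k)+w(j',k')-w(j,r)-w(j',r')$ lies in the lineality space of $\mcC_{\mcA'}$, where $r,r'$ are the reduced parameters (which are even precisely when the residual parities match), so such a sum decomposes into two item (2) generators modulo lineality and is not extremal. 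More broadly, the paper proves completeness not facet-by-facet but by a decomposition argument (Lemma \ref{lemma:gen}): write an arbitrary $u\in\mcC_\mcA$ in terms of the frozen generators of Theorem \ref{Theorem_Statement_Result3_Dn}, use the alternating functional $\pi$ on edge coordinates (which kills the lineality space of $\mcC_{\mcA'}$ and takes value $\pm1$ on the generators outside $\Lambda$) to regroup the non-$\Lambda$ generators into opposite-sign pairs, and then apply Lemma \ref{lemma:ws} to eliminate the $w$--$w$ pairs. Your extremality step (pairing against $u=\sum d_{P'}$ over complementary relations) does match the paper's final step, but without the decomposition argument and the replacement identity of Lemma \ref{lemma:ws}, your proposal neither establishes that the listed vectors generate $\mcC_\mcA$ nor that the list is exactly right.
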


		\subsection{Proof of Theorems \ref{thm:cf1}, \ref{thm:cf2}, \ref{thm:cf3}, and \ref{thm:cf4}}
		We continue with notation as in \S\ref{sec:cfresults}.

		\begin{lemma}\label{lemma:inlambda1}
			Let $l$ be a diagonal or edge of $\bP_N$. If $l$ is odd, and either $n$ is odd and we are in types $A_n$, $B_n$, or $C_n$, or $n$ is even and we are in type $D_n$, then the projection of $\widetilde{v}(l)$ to $\RR^\mcW$ is the product of $-1$ with the basis vector corresponding to the edge $[\delta_+(l)-1,\delta_+(l)]$. In all other cases, $\widetilde{v}(l)\in \Lambda$.
		\end{lemma}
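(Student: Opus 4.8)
The plan is to compute the image of $\widetilde v(l)$ under the projection $\pi\colon\RR^{\mcV\cup\mcW}\to\RR^\mcW$ onto the frozen coordinates directly, tracking the contribution of each building block---$v(l)$, the vectors $E_i$, and $E$---to each edge (or edge-pair) coordinate. The only cluster variables surviving $\pi$ are the edges themselves: every diagonal maps to zero, and in type $C_n$ the diameter summand $e_{[i,\overline i]}$ of $E_i$ also maps to zero. So the entire computation reduces to bookkeeping of edges lying on arcs of $\bP_N$. Throughout I write $e_{[i,j]}$ for the frozen basis vector attached to the edge $[i,j]$ (an edge pair in types $B_n,C_n,D_n$, using the convention $e_L=e_l=e_{\overline l}$).

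First I would record three elementary projection formulas. (i) $\pi(E_i)=e_{[i-1,i]}+e_{[i,i+1]}$ in every type: the edges $e$ with $i\in e$ or $i\in\overline e$ are precisely those in the two edge pairs $[i-1,i]$ and $[i,i+1]$, the antipodal condition naming the same pairs. (ii) $\pi(v(l))=-\sum_{j=0}^{\len(l)-1}e_{[\delta_-(l)+j,\,\delta_-(l)+j+1]}$, the negated sum over the edges of the minor arc, these being exactly the edges with both endpoints on the minor arc; since $l$ has non-maximal length ($\len(l)<N/2$) no two of these edges are antipodal, so the pairs are distinct. (iii) The tiling identity
\[
	\pi\Bigl(\sum_{k=1}^{m}E_{a+2k-1}\Bigr)=\sum_{j=0}^{2m-1}e_{[a+j,\,a+j+1]},
\]
obtained by telescoping (i), covering the $2m$ consecutive edges starting at $a$ each exactly once. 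Applying this to $E(l)$ (with $a=\delta_-(l)$, $m=\lfloor\len(l)/2\rfloor$) shows that $\pi(E(l))$ sums the first $2\lfloor\len(l)/2\rfloor$ minor-arc edges: all of them if $\len(l)$ is even, and all but the last edge $[\delta_+(l)-1,\delta_+(l)]$ if $\len(l)$ is odd.

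With these in hand the easy cases follow by direct cancellation. When $\len(l)$ is even (the ``otherwise'' branch) $\pi(E(l))$ cancels $\pi(v(l))$ exactly, so $\widetilde v(l)\in\Lambda$. When $\len(l)$ is odd and we are in the $v(l)+E(l)$ branch, the single uncovered edge survives and $\pi(\widetilde v(l))=-e_{[\delta_+(l)-1,\delta_+(l)]}$; here I would check via $N=n+3,\,2n+2,\,2n$ that this branch is exactly the combination ($n$ odd in $A_n,B_n,C_n$, or $n$ even in $D_n$) asserted for the nonzero outcome. For the type $A_n$ odd branch with $n$ even the correction is engineered to kill that leftover edge: one computes $\pi(\tfrac12E)=\sum_{\text{all edges}}e_e$ (each edge appears twice in $E$, once per endpoint) and, applying the tiling identity to the even-length major arc, $\pi(\overline E(l))=\sum_{\text{major-arc edges}}e_e$, so $\tfrac12E-\overline E(l)$ projects to the minor-arc edges and cancels $\pi(v(l))$, giving $\widetilde v(l)\in\Lambda$.

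The delicate branch, and the step I expect to be the main obstacle, is case (b) (types $B_n,C_n,D_n$ with $N/2$ odd). Here I must first pin down the geometry of the auxiliary diagonal $[\delta_+(l),\overline{\delta_-(l)}]$: its minor arc runs counterclockwise from $\delta_+(l)$ with length $N/2-\len(l)$, which is even precisely because $N/2$ and $\len(l)$ are both odd, and its $\delta_-$ equals $\delta_+(l)$. The tiling identity then yields $\pi\bigl(E([\delta_+(l),\overline{\delta_-(l)}])\bigr)=\sum e_e$ over the edges at offsets $\len(l),\dots,N/2-1$ from $\delta_-(l)$. Combined with $\pi(\tfrac14E)=\sum_{\text{all edge pairs}}e_L$ (each of the $N/2$ pairs occurs \emph{four} times in $E$, from $E_j,E_{j+1},E_{\overline j},E_{\overline j+1}$) and $\pi(v(l))=-\sum$ over offsets $0,\dots,\len(l)-1$, the three sums telescope to zero. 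The bookkeeping requiring care is twofold: the factor $4$ rather than $2$ for $\pi(E)$, coming from the antipodal identification of edge pairs, and the verification that the minor-arc offsets of $l$ and of $[\delta_+(l),\overline{\delta_-(l)}]$ partition $\{0,\dots,N/2-1\}$ into distinct pairs, which is exactly what makes $\tfrac14E$ supply the missing complement. Finally I would confirm that case (b) always falls under the $\widetilde v(l)\in\Lambda$ alternative of the lemma, completing the correspondence between the definitional branches and the stated parity hypotheses.
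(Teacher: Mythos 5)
Your proposal is correct and follows essentially the same route as the paper's proof: a case analysis on the three definitional branches of $\widetilde{v}(l)$, computing the projection to $\RR^\mcW$ and observing that $E(l)$ (resp.\ $\tfrac12 E-\overline{E}(l)$, resp.\ $\tfrac14 E-E([\delta_+(l),\overline{\delta_-(l)}])$) cancels the minor-arc (resp.\ major-arc, resp.\ complementary-arc) support of $v(l)$, except for the single surviving edge $[\delta_+(l)-1,\delta_+(l)]$ in the odd-length case of the third branch. Your explicit tiling identity, the factor-of-four count for edge pairs in $E$, and the check matching branches to the stated parity hypotheses are just more detailed versions of bookkeeping the paper leaves implicit.
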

		\begin{proof}
			Suppose first that we are in one of the cases where $\widetilde{v}(l)=v(l)+E(l)$. The projection of $v(l)$ to $\RR^\mcW$ is supported on the edges of $\bP_N$ on the minor arc of $l$ (with each entry $-1$). The projection of $E(l)$ to $\RR^\mcW$ is supported on the edges of the minor arc of $[\delta_-(l),\delta_-(l)+2\lfloor \len(l)/2\rfloor]$ (with each entry $1$). If $\len(l)$ is even, these two arcs agree and the terms cancel. If $\len(l)$ is odd, then the edge $[\delta_+(l)-1,\delta_+(l)]$ in the minor arc of $l$ does not appear in the latter arc. The claim follows in these cases.

			Suppose instead that $l$ is odd and we are in type $A_n$ with $n$ even. Then the projection of $v(l)+\frac{1}{2}E$ to $\RR^\mcW$ is supported on the edges of the major arc of $l$ (with entries $1)$, and $\overline{E}(l)$ is also supported on the edges of the major arc of $l$ (with entries $1)$. The claim follows in this case.

			If finally $l$ is odd and we are in type $B_n,C_n,D_n$ with $N/2$ odd, the projection of $v(l)+\frac{1}{4}E$ to $\RR^\mcW$ is supported on the edges of the minor arc of $[\delta_+(l),\overline \delta_-(l)]$ (with entries $1)$ as is the projection of $E([\delta_+(l),\overline \delta_-(l)])$ (with entries $1$). The claim follows in this case as well.
		\end{proof}

		\begin{lemma}\label{lemma:F}
			Let $l$ be an odd diagonal or edge of $\bP_N$. The projection of $H(l)$ to $\RR^\mcW$ is the basis vector corresponding to the edge $[\delta_-(l)-1,\delta_-(l)]$ plus the basis vector corresponding to the edge $[\delta_+(l)-1,\delta_+(l)]$.
		\end{lemma}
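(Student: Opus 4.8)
The plan is to compute the projection directly from the definitions, reducing everything to the projection of a single $E_i$ and then exploiting the alternating signs to telescope. First I would record the projection of each $E_i$ onto $\RR^\mcW$. By the definitions in \S\ref{sec:linspace}, in every type the only frozen coordinates of $E_i$ with nonzero entry are those indexed by the (pair of) edges of $\bP_N$ incident to the vertex $i$, each with coefficient $1$; the diameter term $e_{[i,\overline i]}$ occurring in types $B_n,C_n,D_n$ is indexed by a cluster variable and hence dies under projection to $\RR^\mcW$. Thus, writing $[j,j+1]$ for the edge (or edge pair) of $\bP_N$ joining consecutive vertices, the projection of $E_j$ is $e_{[j-1,j]}+e_{[j,j+1]}$.

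Next, I would substitute this into the definition
\[
H(l)=\sum_{k=0}^{\len(l)-1}(-1)^k E_{\delta_-(l)+k},
\]
so that, using $\delta_+(l)=\delta_-(l)+\len(l)$, the projection of $H(l)$ becomes
\[
\sum_{k=0}^{\len(l)-1}(-1)^k\Bigl(e_{[\delta_-(l)+k-1,\,\delta_-(l)+k]}+e_{[\delta_-(l)+k,\,\delta_-(l)+k+1]}\Bigr).
\]
The key observation is a telescoping cancellation: for $0\le k\le \len(l)-2$, the interior edge $[\delta_-(l)+k,\delta_-(l)+k+1]$ appears once as the right-hand edge of $E_{\delta_-(l)+k}$ with sign $(-1)^k$ and once as the left-hand edge of $E_{\delta_-(l)+k+1}$ with sign $(-1)^{k+1}$, and these two contributions cancel. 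The only edges with no cancelling partner are the leftmost one $[\delta_-(l)-1,\delta_-(l)]$ (occurring only in the $k=0$ term, with coefficient $+1$) and the rightmost one $[\delta_+(l)-1,\delta_+(l)]$ (occurring only in the $k=\len(l)-1$ term, with coefficient $(-1)^{\len(l)-1}$).

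Finally, the coefficient at the right endpoint is settled by the hypothesis that $l$ is odd: then $\len(l)$ is odd, so $(-1)^{\len(l)-1}=+1$, and the projection of $H(l)$ is exactly $e_{[\delta_-(l)-1,\delta_-(l)]}+e_{[\delta_+(l)-1,\delta_+(l)]}$, as claimed. The only real obstacle here is the sign bookkeeping at the two boundary edges — confirming which terms fail to cancel and that oddness of $\len(l)$ forces both surviving coefficients to equal $+1$; once the projection of $E_i$ is identified, everything else is a routine telescoping argument that is uniform across all of the types $A_n,B_n,C_n,D_n$.
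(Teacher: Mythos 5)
Your proof is correct and takes essentially the same approach as the paper: project each $E_i$ onto the two incident (pairs of) edges, telescope the alternating sum, and use oddness of $\len(l)$ to see that the surviving coefficient at $[\delta_+(l)-1,\delta_+(l)]$ is $+1$. The only slip is cosmetic — the explicit diameter term $e_{[i,\overline i]}$ appears only in the type $C_n$ definition of $E_i$ (in types $B_n,D_n$ diameters enter through the first sum) — but this is immaterial since every coordinate indexed by a diagonal vanishes under the projection to $\RR^\mcW$ anyway.
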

		\begin{proof}
When projecting to $\RR^\mcW$, the $k$th term in the alternating sum defining $H(l)$ is supported at exactly two edges, namely 
$[\delta_-(l)+k-1,\delta_-(l)+k]$ and $[\delta_-(l)+k,\delta_-(l)+k+1]$.
Since the sum is alternating, everything cancels except the $[\delta_-(l)+k-1,\delta_-(l)+k]$-coordinate when $k=0$ and the $[\delta_-(l)+k,\delta_-(l)+k+1]$-coordinate when $k=\len(l)-1$, and the claim follows.
				\end{proof}

		\begin{lemma}\label{lemma:inlambda2}
			Suppose we are in type $D_n$ for $n\geq 4$ and fix $1\leq j \leq n$ and $0\leq k \leq n-1$.  If $n$ is even and $k$ is odd, the projection of $\widetilde w(j,k)$ to $\RR^\mcW$ is the product of $-1$ with the basis vector corresponding to the edge $[j+k-1,j+k]$. In all other cases, $\widetilde{w}(j,k)\in\Lambda$.
		\end{lemma}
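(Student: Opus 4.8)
The plan is to reduce the entire statement to Lemma~\ref{lemma:inlambda1}, which already records exactly how $\widetilde{v}$ of a single diagonal projects to $\RR^{\mcW}$. First I would observe that in the formulas of \S\ref{sec:rays} for $w(i)$, $\widehat{w}(i)$, and hence $\widetilde{w}(j,k)$, the only standard basis vectors appearing directly are $e_{[i,\overline i]}$ and $e_{\widehat{[i,\overline i]}}$. Since in type $D_n$ the colored diameters are cluster variables (not frozen variables), these vectors lie in $\Lambda$, i.e.\ they project to $0$ in $\RR^{\mcW}$. Consequently the projection of $\widetilde{w}(j,k)$ to $\RR^{\mcW}$ equals the sum of the projections of the various $\widetilde{v}(\cdot)$ terms produced by the substitution $v\mapsto\widetilde{v}$.

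Next I would pin down the lengths of the diagonals whose $\widetilde{v}$ occur. Every $\widetilde{v}$-term coming from a $w(i)$ or $\widehat{w}(i)$ has the form $\widetilde{v}([m+1,\overline m-1])$, and since $\overline m = m+n$ in $\bP_{2n}$, the diagonal $[m+1,\overline m-1]=[m+1,m+n-1]$ has length $n-2$. Such a diagonal is odd exactly when $n$ is odd, whereas the nontrivial case of Lemma~\ref{lemma:inlambda1} in type $D_n$ requires the diagonal to be odd \emph{and} $n$ to be even. As these conditions are incompatible, $\widetilde{v}([m+1,\overline m-1])\in\Lambda$ for every $m$, regardless of the parity of $n$.

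With this in hand the even case is immediate: when $k$ is even, $\widetilde{w}(j,k)$ is a $\ZZ$-linear combination of vectors $e_{[i,\overline i]}$, $e_{\widehat{[i,\overline i]}}$, and $\widetilde{v}([m+1,\overline m-1])$, all of which lie in $\Lambda$, so $\widetilde{w}(j,k)\in\Lambda$. When $k$ is odd there is precisely one extra term, namely $\widetilde{v}([j+k,\overline j+k+1])$. Here $[j+k,\overline j+k+1]=[j+k,j+n+k+1]$ has length $n-1$, with $\delta_+=j+k$, and is odd exactly when $n$ is even. Thus Lemma~\ref{lemma:inlambda1} gives projection $-e_{[j+k-1,j+k]}$ when $n$ is even and $\widetilde{v}([j+k,\overline j+k+1])\in\Lambda$ when $n$ is odd. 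Combining the two cases yields the statement: the projection is $-e_{[j+k-1,j+k]}$ precisely when $n$ is even and $k$ is odd, and $\widetilde{w}(j,k)\in\Lambda$ otherwise.

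I expect that the only real care required, rather than a genuine obstacle, is the bookkeeping of arc lengths: one must verify that $[m+1,\overline m-1]$ has length $n-2$, that $[j+k,\overline j+k+1]$ has length $n-1$, and in particular that the counterclockwise-final endpoint $\delta_+$ of the minor arc of $[j+k,\overline j+k+1]$ equals $j+k$, so that the edge output by Lemma~\ref{lemma:inlambda1} is exactly $[j+k-1,j+k]$ as claimed.
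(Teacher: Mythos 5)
Your proposal is correct and follows essentially the same route as the paper's own proof: both reduce everything to Lemma~\ref{lemma:inlambda1} after noting that the colored-diameter basis vectors project to zero, then use the facts that the $\widetilde{v}$-terms coming from $w(i)$, $\widehat{w}(i)$ involve diagonals of length $n-2$ (even when $n$ is even, so always harmless) while the extra term for $k$ odd involves the length-$(n-1)$ diagonal $[j+k,\overline{j}+k+1]$ with $\delta_+ = j+k$. Your explicit verification of the arc lengths and of $\delta_+(l)=j+k$ is exactly the bookkeeping the paper leaves implicit.
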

		\begin{proof}
			The only potential contributions to the projection of $\widetilde w(j,k)$ to $\RR^\mcW$ come from the projections of the $\widetilde{v}(l)$ appearing in the formula for $\widetilde w(j,k)$. When $n$ is odd, all $\widetilde{v}(l) \in \Lambda$ by Lemma \ref{lemma:inlambda1} so $\widetilde{w}(j,k)\in\Lambda$.

			When $n$ is even, we similarly only need to worry about contributions from $\widetilde{v}(l)$ when $l$ is odd. The diagonals $l$ with $\widetilde{v}(l)$ appearing in $w(j,k)$ for $k$ even all have length $n-2$, hence are even, and so again $\widetilde{w}(j,k)\in\Lambda$.
			For $k$ odd, the only contribution with an odd diagonal is exactly $\widetilde{v}([j+k,\overline j+k+1])$ (the diagonal has length $n-1$). The claim now follows from Lemma \ref{lemma:inlambda1}.
		\end{proof}

		\begin{lemma}\label{lemma:incone}

The elements listed as  ray generators of $\mcC_\mcA$ in Theorems  \ref{thm:cf1}, \ref{thm:cf2}, \ref{thm:cf3}, or \ref{thm:cf4} belong to $\mcC_\mcA$.
		\end{lemma}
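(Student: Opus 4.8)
The plan is to exploit the description of $\mcC_\mcA$ set up at the start of \S\ref{sec:cfresults}: since $\mcC_\mcA=\mathrm{proj}(\mcC_{\mcA'}\cap\Lambda)$ and the projection forgetting the frozen coordinates restricts to a bijection on the subspace $\Lambda$, to place a listed generator $g$ in $\mcC_\mcA$ it suffices to verify two independent conditions. First, (i) $g\in\Lambda$, i.e. the projection of $g$ to $\RR^{\mcW}$ vanishes; second, (ii) $g\in\mcC_{\mcA'}$, i.e. $g\cdot d_P\geq 0$ for every primitive exchange relation $P$, which characterizes $\mcC_{\mcA'}$ by Lemma \ref{Lemma_Of_The_Whole_Thesis} applied to $\mcA'$. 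I would organize the proof around these two checks, handling all four theorems uniformly.

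For (ii) the work is almost entirely bookkeeping against results already in hand. Every generator listed in Theorems \ref{thm:cf1}, \ref{thm:cf2}, \ref{thm:cf3}, \ref{thm:cf4} is a sum, with all coefficients $+1$, of terms of the shape $\widetilde v(l)$, $\widetilde w(j,k)$, and $H(\cdot)$. Writing $\widetilde v(l)=v(l)+(\ast)$ and $\widetilde w(j,k)=w(j,k)+(\ast)$, where $(\ast)$ denotes a (rational) combination of the vectors $E_i$, and noting that each $H(\cdot)$ is itself such a combination, I would invoke $E_i\cdot d_P=0$ for all primitive $P$ (established in the proofs of Propositions \ref{Proposition_ABCn_LinSp} and \ref{Proposition_Result3_Dn_Linsp}). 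Hence every $E_i$-contribution drops out and $g\cdot d_P$ collapses to the corresponding sum of the quantities $v(l)\cdot d_P$ and $w(j,k)\cdot d_P$. These are non-negative by Lemmas \ref{lemma:newdp} and \ref{lemma:dpnew2} respectively, so $g\cdot d_P\geq 0$, giving $g\in\mcC_{\mcA'}$.

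For (i) the projections to $\RR^{\mcW}$ are supplied by Lemmas \ref{lemma:inlambda1}, \ref{lemma:F}, and \ref{lemma:inlambda2}. Whenever every $\widetilde v(l)$ and $\widetilde w(j,k)$ occurring in $g$ already lies in $\Lambda$ (namely $\widetilde v(l)$ with $l$ even, and $\widetilde w(j,k)$ outside the exceptional case $n$ even and $k$ odd), there is nothing to check. In the remaining families the odd terms each project to a single negative edge-vector, and these are arranged to cancel the projection of the accompanying $H(\cdot)$ term. For a generator $\widetilde v(l)+\widetilde v(l')+H([\delta_+(l),\delta_+(l')])$, Lemma \ref{lemma:inlambda1} gives the two negative vectors $-e_{[\delta_+(l)-1,\delta_+(l)]}$ and $-e_{[\delta_+(l')-1,\delta_+(l')]}$, while Lemma \ref{lemma:F} shows $H([\delta_+(l),\delta_+(l')])$ projects to exactly $e_{[\delta_+(l)-1,\delta_+(l)]}+e_{[\delta_+(l')-1,\delta_+(l')]}$; the parity hypothesis $\delta_+(l)+\delta_+(l')\equiv 1\bmod 2$ is precisely what forces $[\delta_+(l),\delta_+(l')]$ to be odd (so Lemma \ref{lemma:F} applies) with endpoints $\delta_-=\delta_+(l)$, $\delta_+=\delta_+(l')$, so the cancellation is exact and the sum lies in $\Lambda$. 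The type-4 generators $\widetilde w(j,k)+\widetilde v(l)+H([j+k,\delta_+(l)])$ are handled the same way, using Lemma \ref{lemma:inlambda2} in place of Lemma \ref{lemma:inlambda1} for the $\widetilde w$-term and the congruence $j+\delta_+(l)\equiv 0\bmod 2$ to align the edges.

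I expect the only genuinely delicate point to be the edge-matching in (i): in each odd family one must confirm that the negative edges produced by the odd $\widetilde v$- and $\widetilde w$-terms coincide with the two boundary edges attached to the relevant $H(\cdot)$, and that the prescribed parity congruences indeed make the argument of $H$ an odd diagonal or edge with the correct $\delta_-,\delta_+$. Since Lemmas \ref{lemma:inlambda1}, \ref{lemma:F}, and \ref{lemma:inlambda2} isolate each projection, this reduces to a finite, type-by-type verification of which edges cancel. Once that matching is confirmed, (i) and (ii) together place every listed generator in $\mcC_{\mcA'}\cap\Lambda$, hence in $\mcC_\mcA$, completing the lemma.
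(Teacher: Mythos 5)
Your proposal is correct and follows essentially the same route as the paper: both establish membership in $\mcC_{\mcA'}$ and in $\Lambda$ separately, and then use $\mcC_\mcA=\Lambda\cap\mcC_{\mcA'}$, with the $\Lambda$-membership handled by exactly the same edge-cancellation argument via Lemmas \ref{lemma:inlambda1}, \ref{lemma:F}, and \ref{lemma:inlambda2}. The only (immaterial) difference is that for membership in $\mcC_{\mcA'}$ the paper cites the primal descriptions in Theorems \ref{Theorem_Statement_Result3_ABCn} and \ref{Theorem_Statement_Result3_Dn} (non-negative combination of ray generators plus lineality), whereas you verify the dual condition $g\cdot d_P\geq 0$ directly from Lemmas \ref{lemma:newdp} and \ref{lemma:dpnew2} together with $E_i\cdot d_P=0$ — the same underlying facts.
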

		\begin{proof}
			Let $g$ be one of the elements listed as a generator.
			By Theorems \ref{Theorem_Statement_Result3_ABCn} and \ref{Theorem_Statement_Result3_Dn}, $g$ is a non-negative linear combination of ray generators of $\mcA'$ with elements from the lineality space of $\mcA'$; as such, it belongs to $\mcC_{\mcA'}$. 

			Consider 
			\[
g=\widetilde{v}(l)+\widetilde{v}(l')+H([\delta_+(l),\delta_+(l')])
			\]
			as in Theorem \ref{thm:cf2} or Theorem \ref{thm:cf4}. By Lemma \ref{lemma:inlambda1} and Lemma \ref{lemma:F}, projecting to $\RR^\mcW$ we obtain $0$.
Likewise, for $\widetilde{w}(j,k)+\widetilde{v}(l)+H([j+k,\delta_+(l)])$ as in Theorem \ref{thm:cf4}, projection to $\RR^\mcW$ we obtain $0$ by Lemmas \ref{lemma:inlambda1}, Lemma \ref{lemma:F}, and \ref{lemma:inlambda2}.

For all other types of generators $g$, we see directly from Lemma \ref{lemma:inlambda1} or \ref{lemma:inlambda2} that $g\in \Lambda$. In any case, we conclude that $g$ always belongs to $\Lambda\cap \mcC_{\mcA'}$, which is exactly $\mcC_\mcA$.
		\end{proof}
		\begin{lemma}\label{lemma:ws}
Suppose we are in type $D_n$ with $n\geq 4$ even. Let $1\leq j,j'\leq n$, $0\leq k,k'\leq n-1$.
		Let $r,r'$ respectively be the remainders of $j'+k'-j$ and $j+k-j'$ by $n$.
Then
\[u:={w}(j,k)+{w}(j',k')-{w}(j,r)-{w}(j',r')
\]
belongs to the lineality space of $\mcC_{\mcA'}$.
\end{lemma}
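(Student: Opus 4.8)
The plan is to show that $u\cdot d_P=0$ for every primitive exchange relation $P$. By Lemma \ref{Lemma_Of_The_Whole_Thesis} the cone $\mcC_{\mcA'}$ is dual to $\mcC_\prim$, the cone generated by the degrees $d_P$; consequently its lineality space is exactly $\{x : x\cdot d_P=0 \text{ for all primitive } P\}$, so verifying these orthogonality relations suffices. Each of the four summands of $u$ is of the form $w(a,b)$, and Lemma \ref{lemma:dpnew2}(1) guarantees that $w(a,b)\cdot d_P=0$ whenever $P$ is not a primitive type 3 exchange. Hence $u\cdot d_P=0$ is immediate for every $P$ outside type 3, and it remains only to treat the primitive type 3 exchanges.

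First I would fix notation for these. A primitive type 3 exchange is determined by a position $i$, its two crossing diameters being $[i,\overline i]$ and $[i+1,\overline{i+1}]$, together with a choice of which diameter is blue. Write $P_i^{BR}$ for the exchange whose preceding diagonal $[i,\overline i]$ is blue (so the succeeding diagonal $[i+1,\overline{i+1}]$ is red), and $P_i^{RB}$ for the opposite coloring. Tracking colours through the blue terms $w(\cdot)$ and red terms $\widehat w(\cdot)$ in the definition of $w(a,b)$, and through the extra term $v([a+b,\overline{a+b+1}])$ present when $b$ is odd, Lemma \ref{lemma:dpnew2}(1) refines to
\[
w(a,b)\cdot d_{P_i^{BR}}=\begin{cases}1 & i\equiv a+b\pmod n\\ 0 & \text{otherwise,}\end{cases}
\qquad
w(a,b)\cdot d_{P_i^{RB}}=\begin{cases}1 & i\equiv a-1\pmod n\\ 0 & \text{otherwise.}\end{cases}
\]
The key structural observation is that $P_i^{RB}$ is detected purely by the first index $a$, independently of the length $b$, whereas $P_i^{BR}$ is detected purely by the sum $a+b$.

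With these formulas the cancellation is purely arithmetic. Against $d_{P_i^{RB}}$, the terms $w(j,k)$ and $w(j,r)$ have equal dot product (both equal to $1$ exactly when $i\equiv j-1$, and $0$ otherwise) and occur in $u$ with opposite signs; likewise $w(j',k')$ and $w(j',r')$ have equal dot product and opposite signs. Hence $u\cdot d_{P_i^{RB}}=0$. Against $d_{P_i^{BR}}$, I would use the defining congruences $r\equiv j'+k'-j$ and $r'\equiv j+k-j'$ to obtain $j+r\equiv j'+k'$ and $j'+r'\equiv j+k\pmod n$. Thus $w(j,r)$ and $w(j',k')$ have the same dot product with $d_{P_i^{BR}}$, as do $w(j',r')$ and $w(j,k)$; since these matched pairs appear with opposite signs, $u\cdot d_{P_i^{BR}}=0$ as well. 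As every primitive $P$ is now accounted for, $u$ lies in the lineality space of $\mcC_{\mcA'}$.

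The one delicate point I expect is establishing the two displayed dot-product formulas \emph{uniformly} in $b$, and in particular at the wrap-around value $b=n-1$, where $a+b\equiv a-1\pmod n$ and the window of diameters appearing in $w(a,b)$ closes up on itself. Here one must check that the extra $v$-term, together with the parity of the offsets in the even/odd formulas for $w(a,b)$ and the fact that $n$ is even, conspires to yield the value $1$ rather than $0$ or $2$; this is the place where the colour bookkeeping genuinely matters. Once these formulas are in hand, the final cancellation is robust and requires no case analysis on the relative sizes or positions of $j,j',k,k'$.
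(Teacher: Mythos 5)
Your proof is correct and follows essentially the same route as the paper: both reduce the claim to checking $u\cdot d_P=0$ for every primitive exchange relation $P$, invoke Lemma \ref{lemma:dpnew2}(1) to isolate the (at most) four type~3 exchanges pairing nontrivially with the summands of $u$, and cancel these in pairs using the congruences $j+r\equiv j'+k'$ and $j'+r'\equiv j+k \pmod n$. The ``delicate point'' you flag at the end is not actually an issue: your two displayed dot-product formulas are nothing more than a notational restatement of Lemma \ref{lemma:dpnew2}(1), which is already stated for all $0\leq k\leq n-1$ (wrap-around included), so no further colour bookkeeping is required.
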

\begin{proof}
For all primitive exchange relations $P$, we claim that $u\cdot d_P=0$; the claim will follow.
By Lemma \ref{lemma:dpnew2}, there are (at most) four primitive exchange relations $P$ such that the dot product of $d_P$ with one of the four terms of $u$ is non-zero: the relations with  $[j,\overline j]$ or  $[j',\overline j']$ the succeeding diagonal, and the relations with $[j+k,\overline j+k]=[j'+r',\overline j'+r']$ or $[j'+k',\overline j'+k']=[j+r,\overline j+r]$ the preceding diagonal. Each of these four $d_P$ has dot product $1$ with one of the positive summands of $u$, and dot product $1$ with one of the negative summands of $u$, cancelling out.
\end{proof}

		\begin{lemma}\label{lemma:gen}
			Any element of $\mcC_\mcA$ can be written as the sum of an element of the lineality space of $\mcC_\mcA$ plus a non-negative linear combination of the generators listed in Theorem \ref{thm:cf1}, \ref{thm:cf2}, \ref{thm:cf3}, or \ref{thm:cf4}.
		\end{lemma}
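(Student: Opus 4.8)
The plan is to leverage the description \(\mcC_\mcA=\Lambda\cap\mcC_{\mcA'}\) from \S\ref{sec:cfresults}, where \(\Lambda\subseteq\RR^{\mcV\cup\mcW}\) is the subspace on which the coordinates indexed by \(\mcW\) vanish, together with the explicit generators and lineality space of \(\mcC_{\mcA'}\) obtained in \S\ref{sec:rays}. Write \(K\) for the lineality space of \(\mcC_{\mcA'}\), which by Propositions \ref{Proposition_ABCn_LinSp} and \ref{Proposition_Result3_Dn_Linsp} is spanned by the vectors \(E_i\) (and, in type \(D_n\), by \(u^{\mathrm{diam}}\)), and write \(\pi_\mcW\colon\RR^{\mcV\cup\mcW}\to\RR^{\mcW}\) for the projection. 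Given \(x\in\mcC_\mcA\), Theorems \ref{Theorem_Statement_Result3_ABCn} and \ref{Theorem_Statement_Result3_Dn} let us write \(x=\ell+\sum_l c_l\,v(l)+\sum_{j,k}c_{j,k}\,w(j,k)\) with \(\ell\in K\) and all \(c\ge 0\). Since each difference \(\widetilde v(l)-v(l)\) and \(\widetilde w(j,k)-w(j,k)\) is, by construction, a \(\ZZ\)-linear combination of the \(E_i\) and hence lies in \(K\), I would replace every \(v(l)\) and \(w(j,k)\) by its tilde and absorb the corrections into the lineality term, obtaining
\[
x=\ell'+\sum_l c_l\,\widetilde v(l)+\sum_{j,k}c_{j,k}\,\widetilde w(j,k),\qquad \ell'\in K,\ c\ge 0.
\]

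In the cases of Theorems \ref{thm:cf1} and \ref{thm:cf3} (types \(A_n,B_n,C_n\) with \(n\) even, or \(D_n\) with \(n\) odd), every \(\widetilde v(l)\) and \(\widetilde w(j,k)\) appearing is itself one of the listed generators and lies in \(\Lambda\) by Lemmas \ref{lemma:inlambda1} and \ref{lemma:inlambda2}. Applying \(\pi_\mcW\) to the displayed identity then kills all the generator terms, and since \(\pi_\mcW(x)=0\) it yields \(\pi_\mcW(\ell')=0\); thus \(\ell'\in K\cap\Lambda\), which is exactly the lineality space of \(\mcC_\mcA\). This writes \(x\) as a lineality element plus a non-negative combination of listed generators, as required.

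The remaining cases (Theorems \ref{thm:cf2} and \ref{thm:cf4}) are the heart of the matter, since now the \(\widetilde v(l)\) with \(l\) odd (and, in type \(D_n\) with \(n\) even, the \(\widetilde w(j,k)\) with \(k\) odd) no longer lie in \(\Lambda\): by Lemmas \ref{lemma:inlambda1} and \ref{lemma:inlambda2} they project under \(\pi_\mcW\) to the negative of a single boundary-edge basis vector, and hence are not listed generators on their own, but occur only inside the listed combined generators such as \(\widetilde v(l)+\widetilde v(l')+H([\delta_+(l),\delta_+(l')])\). The key structural observation is that each \(H(l)\) is an alternating sum of the \(E_i\), so \(H(l)\in K\), while by Lemma \ref{lemma:F} its image \(\pi_\mcW(H(l))\) is the sum of the two edge-vectors at \(\delta_-(l)\) and \(\delta_+(l)\). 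Applying \(\pi_\mcW\) to the displayed identity records \(\pi_\mcW(\ell')\) as a non-negative combination of single edge-vectors, one for each odd term; because \(\ell'\in K\), this combination must lie in \(\pi_\mcW(K)\), which for even \(N\) (as holds in all these cases) is the cut space of the boundary cycle. This forces the total weight of the odd terms whose edge has even parity to equal that of the odd terms whose edge has odd parity, which is precisely the parity condition \(\delta_+(l)+\delta_+(l')\equiv 1\bmod 2\) required to pair them.

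From here the plan is to pair the odd terms across the two parity classes by a fractional matching, decomposing \(\sum_{\mathrm{odd}}\) into pieces \(d_p\bigl(\widetilde v(l)+\widetilde v(l')\bigr)\); adding the corresponding \(d_p\,H([\delta_+(l),\delta_+(l')])\) yields listed combined generators, and the residual \(\ell'-\sum_p d_p H_p\) lies in \(K\) (each \(H_p\in K\)) and, by the projection bookkeeping, also in \(\Lambda\), hence in \(K\cap\Lambda=\) the lineality space of \(\mcC_\mcA\). In type \(D_n\) with \(n\) even one additionally invokes Lemma \ref{lemma:ws} to trade pairs of odd-\(k\) terms \(\widetilde w(j,k)\) so that every such \(\widetilde w\) can instead be matched with an odd \(\widetilde v\) through a generator of item (4) of Theorem \ref{thm:cf4}. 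The main obstacle is exactly this matching step: one must realize the pairing with \emph{non-negative} coefficients, respect the parity constraint, avoid pairings whose connecting segment is a diameter (for which \(H\) is undefined), and verify that the residual is genuinely the claimed lineality term. The balance forced by \(\ell'\in K\), together with the fact that every admissible pair is a listed generator, should make a flow/matching argument go through, and combined with Lemma \ref{lemma:incone} this completes the proof.
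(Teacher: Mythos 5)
Your proposal follows essentially the same route as the paper's proof: decompose an element of $\mcC_\mcA$ via the generators of $\mcC_{\mcA'}$ from Theorems \ref{Theorem_Statement_Result3_ABCn} and \ref{Theorem_Statement_Result3_Dn}, replace each $v(l)$ and $w(j,k)$ by its tilde version while absorbing the differences into the lineality space, dispose of the cases of Theorems \ref{thm:cf1} and \ref{thm:cf3} by projecting to $\RR^{\mcW}$, and in the remaining cases derive a parity/balance condition that lets one pair the non-$\Lambda$ generators across the two parity classes, trading paired odd-$k$ terms $w(j,k)$, $w(j',k')$ via Lemma \ref{lemma:ws}. The only difference is cosmetic — the paper encodes your ``cut space'' condition by a single alternating linear functional $\pi$ on edge coordinates, which vanishes on the lineality space and on $\Lambda$-generators and takes values $\pm 1$ on the rest, so that $\pi(u)=0$ gives $\sum_{g\in\mcG_+}\lambda_g=\sum_{g\in\mcG_-}\lambda_g$ — and the matching you flag as the ``main obstacle'' then exists trivially (e.g.\ with product coefficients $\mu_{g,g'}=\lambda_g\lambda_{g'}/\sum_{g\in\mcG_+}\lambda_g$), since every pair in $\mcG_+\times\mcG_-$ automatically satisfies the parity condition and hence differs from a listed generator by a lineality element.
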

		\begin{proof}
			Fix the type of $\mcA$, and let $\mcG$ be the set of generators of $\mcC_{\mcA'}$ from either Theorem \ref{Theorem_Statement_Result3_ABCn} and \ref{Theorem_Statement_Result3_Dn}. Then an arbitrary element $u\in \mcC_\mcA$ can be written as a 
\[
	u=u_0+\sum_{g\in \mcG} \lambda_g\cdot g
\]
where $u_0$ is in the lineality space of $\mcC_{\mcA'}$ and $\lambda_g\geq 0$. For each generator $g$, let $\widetilde{g}$ be the vector obtained by replacing any $v(l)$ with $\widetilde{v}(l)$; these differ only by elements of the lineality space so we may still write
\[
	u=\widetilde{u}_0+\sum_{g\in \mcG} \lambda_g\cdot \widetilde{g}
\]
for some $\widetilde{u}_0$ in the lineality space of $\mcC_{\mcA'}$. In the cases of Theorem  \ref{thm:cf1} and \ref{thm:cf3}, all the $\widetilde{g}$ belong to $\Lambda$, so $\widetilde{u}_0$ does as well, so $\widetilde{u}_0$ is in the lineality space of $\mcC_\mcA$ and the claim of the lemma follows.

For the cases of Theorems  \ref{thm:cf2} and \ref{thm:cf4}, $N$ is even (in type $A_n$) and divisible by $4$ (in types $B_n$,$C_n$,$D_n$). Consider the projection $\pi:\RR^{\mcV\cup\mcW}\to \RR$ sending the basis vector corresponding to $[i,i+1]$ to $1$ if $i$ is odd and $-1$ if $i$ is even, and everything else to zero. This is well-defined by the above condition on $N$.

By examining the generators of the lineality space of $\mcC_\mcA'$ (Theorems \ref{Proposition_ABCn_LinSp} and \ref{Proposition_Result3_Dn_Linsp}) we see that all elements of the lineality space project to $0$. On the other hand, by Lemmas \ref{lemma:inlambda1} and \ref{lemma:inlambda2}, $\pi(\widetilde{g})=0$ if and only if $\widetilde{g}\in \Lambda$; denote the set of these $g$ by $\mcG_0$. For all other $g\in\mcG$, we have $\pi(\widetilde{g})=\pm 1$. Let 
$\mcG_+$ be those $g$ with $\pi(\widetilde{g})= 1$ and $\mcG_-$ be those $g$ with $\pi(\widetilde{g})=- 1$.

Since $\pi(u)=0$, it follows that 
\[
\sum_{g\in\mcG_+} \lambda_g=\sum_{g\in\mcG_-} \lambda_g.
\]
This means that we may find $\mu_{g,g'}\in \RR_{\geq 0}$ for $(g,g')\in \mcG_+\times \mcG_-$ such that 
\[
	u=\widetilde{u}_0+\sum_{g\in \mcG_0} \lambda_g\cdot \widetilde{g}+\sum_{(g,g')\in \mcG_+\times \mcG_-} \mu_{g,g'}\cdot (\widetilde{g}+\widetilde{g'}).
\]

If we are in type $D_n$ for $n$ even and $(g,g')=(w(j,k),w(j',k'))$ with $k,k'$ odd, by Lemma \ref{lemma:ws} we may replace $\widetilde{g}+\widetilde{g'}$ with $\widetilde{w}(j,r)+\widetilde{w}(j',r')$, where $r,r'$ are as in the statement of the lemma. Moreover, since $r,r'$ are even, $w(j,r),w(j',r')\in\mcG_0$.

By Lemmas \ref{lemma:inlambda1} and \ref{lemma:inlambda2} it follows that for any $(g,g')\in \mcG_+\times \mcG_-$ not of that form, $\widetilde{g}+\widetilde{g'}$  
differs from one of the listed generators of Theorems  \ref{thm:cf2} and \ref{thm:cf4} by an element of the lineality space of $\mcC_{\mcA'}$.
Combining this with Lemma \ref{lemma:incone} the claim of the lemma follows.
		\end{proof}
We now complete the proof of the theorems:
\begin{proof}[Proof of Theorems \ref{thm:cf1}, \ref{thm:cf2}, \ref{thm:cf3}, and \ref{thm:cf4}]
	To understand the lineality space of $\mcC_\mcA$, we use Theorems \ref{Proposition_ABCn_LinSp} and \ref{Proposition_Result3_Dn_Linsp}.
	It is straightforward to show that the projections of the vectors $E_i$ to $\RR^\mcW$ are linearly independent if $n$ is even and $\mcA$ is of type $A_n$, $B_n$, or $C_n$, or $n$ is odd and $\mcA$ is of type $D_n$. This implies that in these cases the intersection of the lineality space of $\mcC_{\mcA'}$ with $\Lambda$ is $0$ (in types $A_n$, $B_n$, or $C_n$) or generated by $u^\diam$ (in type $D_n$).

	On the other hand, if $n$ is odd and $\mcA$ is of type $A_n$, $B_n$, or $C_n$, or $n$ is even and $\mcA$ is of type $D_n$, the projections of the vectors $E_i$ to $\RR^\mcW$ are not linearly independent. However, any set of projections with all but one of the $E_i$ is linearly independent, so we conclude that the lineality space has dimension $1$ (in types $A_n$, $B_n$, or $C_n$) or $2$ (in type $D_n$, again because of the existence of $u^\diam$). In these cases, $\sum_{i=1}^N(-1)^i E_i$ (and $u^\diam$) clearly belong to the intersection of the lineality space of $\mcC_{\mcA'}$ and $\Lambda$. The claims regarding the lineality spaces of $\mcC_\mcA$ follow.

Let $\widetilde \mcG$ be the set of generators listed in the statement of one of the theorems.
	We have already seen in Lemmas \ref{lemma:incone} and \ref{lemma:gen} that $\mcC_\mcA$ is generated by its lineality space and $\widetilde \mcG$. It remains to show that any element $\widetilde g\in \widetilde \mcG$ generates a ray of $\mcC_\mcA$ (modulo lineality space).
If $\widetilde g$ is a translate of a ray generator $g$ of $\mcC_{\mcA'}$ by an element of its lineality space, it is immediate that $\widetilde g$ also generates a ray. We thus may consider generators $\widetilde g$ arising as the translate of $g_1+g_2$, where $g_1,g_2$ are ray generators for $\mcC_{\mcA'}$.

In these cases, take $u=\sum_{P\ :\ \widetilde g\cdot d_P=0} d_P$, where the sum is over all primitive exchange relations $P$ such that $\widetilde g\cdot d_P=0$. 
We will show that $\widetilde g'\cdot u>0$ for all other $\widetilde g'\in \widetilde \mcG$. Since $ \widetilde g\cdot u=0$ this implies that $\widetilde{g}$ generates a ray.

If $g_1=v(l)$, $g_2=v(l')$, then there are only two $P$ not appearing in the sum defining $u$, namely those $P$ with $l\in L_P$ or $l'\in L_P$. It follows from Lemmas \ref{lemma:newdp} and \ref{lemma:dpnew2} that $u$ has the desired property, since $\widetilde \mcG$ does not include a translate of $v(l)$ or $v(l')$. If $g_1=v(l)$, $g_2=w(j,k)$, there are three exchange relations not appearing in the sum defining $u$, namely, the relation $P$ with $l\in L_P$, the relation with $[j,\overline j]$ the succeeding diagonal, and the relation with $[j+k,\overline j+k]$ the preceding diagonal. Again Lemmas \ref{lemma:newdp} and \ref{lemma:dpnew2} imply that $u$ has the desired property, since $\widetilde \mcG$ does not include a translate of $v(l)$ or $w(j,k)$.
		\end{proof}

\bibliographystyle{alpha}
\bibliography{references} 	

\end{document}